\newtheorem{dfn}{Definition}[section]
\newtheorem{thm}[dfn]{Theorem}
\newtheorem{lem}[dfn]{Lemma}
\newtheorem{cor}[dfn]{Corollary}
\newtheorem*{weylpr}{Weyl problem}
\newtheorem*{minkpr}{Minkowski problem}
\newtheorem{conj}[dfn]{Conjecture}
\theoremstyle{definition}
\newtheorem{rem}[dfn]{Remark}
\newtheorem{exl}[dfn]{Example}
\newtheorem{prb}{Problem}
\theoremstyle{plain}
\numberwithin{equation}{section}
\def\R{{\mathbb R}}
\def\Sph{{\mathbb S}}
\def\Ball{{\mathbb B}}
\def\H{{\mathbb H}}
\def\dS{\mathrm{d}{\mathbb S}}
\def\phi{\varphi}
\def\epsilon{\varepsilon}
\newcommand{\Area}{\operatorname{Area}}
\newcommand{\cone}{\operatorname{cone}}
\newcommand{\Vol}{\operatorname{Vol}}
\def\rk{\operatorname{rk}}
\def\can{\mathit{can}}
\def\const{\mathrm{const}}
\def\HE{\operatorname{HE}}
\def\scal{\operatorname{scal}}
\def\wnabla{\widetilde{\nabla}}
\def\wR{\widetilde{R}}
\def\wg{\widetilde{g}}
\def\id{\operatorname{id}}
\def\tr{\operatorname{tr}}
\def\Hess{\operatorname{Hess}}
\def\darea{\operatorname{darea}}
\def\dvol{\operatorname{dvol}}
\def\End{\operatorname{End}}
\def\Hom{\operatorname{Hom}}
\def\sgn{\operatorname{sgn}}
\def\Alt{\operatorname{Alt}}
\def\Sym{\operatorname{Sym}}
\def\I{\operatorname{I}}
\def\II{\operatorname{II}}
\def\III{\operatorname{III}}
\def\cG{\mathcal{G}}
\renewcommand{\det}{\operatorname{det}}
\newcommand{\bigdot}{{\displaystyle{\cdot}}}
\title[Rigidity of surfaces and the Hilbert-Einstein functional II]{Infinitesimal rigidity of convex surfaces through the second derivative of the Hilbert-Einstein functional\\ II: Smooth case}
\author{Ivan Izmestiev}
\address{Institut f\"ur Mathematik, MA 8-3 \\
Technische Universit\"at Berlin \\
Stra{\ss}e des 17. Juni 136 \\
D-10623 Berlin, Germany}
\email{izmestiev@math.tu-berlin.de}
\thanks{Supported by the DFG Research Unit 565 ``Polyhedral Surfaces''}
\date{May 25, 2011}
\begin{document}

\begin{abstract}
The paper is centered around a new proof of the infinitesimal rigidity of smooth closed surfaces with everywhere positive Gauss curvature. We use a reformulation that replaces deformation of an embedding by deformation of the metric inside the body bounded by the surface. The proof is obtained by studying derivatives of the Hilbert-Einstein functional with boundary term.

This approach is in a sense dual to proving the Gauss infinitesimal rigidity, that is rigidity with respect to the Gauss curvature parametrized by the Gauss map, by studying derivatives of the volume bounded by the surface. We recall that Blaschke's classical proof of the infinitesimal rigidity is also related to the Gauss infinitesimal rigidity, but in a different way: while Blaschke uses Gauss rigidity of the same surface, we use the Gauss rigidity of the polar dual.

In the spherical and in the hyperbolic-de Sitter space, there is a perfect duality between the Hilbert-Einstein functional and the volume, as well as between both kinds of rigidity.

We also indicate directions for future research, including the infinitesimal rigidity of convex cores of hyperbolic 3--manifolds.
\end{abstract}

\maketitle

\setcounter{tocdepth}{1}
\tableofcontents

\section{Introduction}
\subsection{Infinitesimal rigidity of strictly convex smooth surfaces}
\label{subsec:IntroThm}
Let $M \subset \R^3$ be a smooth closed surface. An \emph{infinitesimal deformation} of $M$ is a vector field along $M$, which can be viewed as a map $\xi \colon M \to \R^3$. An infinitesimal deformation produces a family of embeddings (for small $t$),
$$
\phi_t \colon M \to \R^3, \quad x \mapsto x + t\xi.
$$
Denote by $g_t$ the metric induced on $M$ by $\phi_t$. A deformation $\xi$ is called \emph{isometric}, if $\left. \frac{d}{dt} \right|_{t=0} g_t = 0$. Every surface has \emph{trivial} isometric infinitesimal deformations, which are restrictions to $M$ of Killing vector fields in $\R^3$.

We call a surface with everywhere positive Gauss curvature \emph{strictly convex}.
\begin{quote}
Every strictly convex smooth closed surface $M$ is infinitesimally rigid, that is every isometric infinitesimal deformation of $M$ is trivial.
\end{quote}
This theorem was proved by Liebmann \cite{Lieb99} for analytic deformations of analytic surfaces and by Blaschke \cite{Bla12, Bla21} and Weyl \cite{Weyl17} for $C^3$--deformations of $C^3$--surfaces. For references to other proofs, see \cite[\S 4.1]{IKS94}.

In this paper we present still another proof. We also describe the general framework, historical perspective, and possible further developments.

\subsection{The approach}
\label{subsec:IntroApproach}
The infinitesimal rigidity theorem stated above deals with a deformation $\phi_t$ of an embedding $M \subset \R^3$. However, it is possible to reformulate it in terms of deformations of the metric inside the body $P \subset \R^3$ bounded by $M$.

View $P$ as an abstract Riemannian manifold equipped with a flat metric $G$ induced from $\R^3$. An \emph{infinitesimal deformation} of $G$ is a field $\dot G$ of symmetric bilinear forms on $P$. An infinitesimal deformation is called \emph{curvature preserving}, if the Riemannian metric $G_t = G + t \dot G$ is flat in the first order of~$t$. If $\dot G$ is curvature preserving and vanishes on vectors tangent to the boundary, then it induces an isometric infinitesimal deformation $\xi$ of the embedding $M \subset \R^3$, cf. \cite[Proposition 3]{Cal61} for a local statement. An infinitesimal deformation $\dot G$ is \emph{trivial}, if it pulls back the metric $G$ by an infinitesimal diffeomorphism that restricts to the identity on the boundary: $\dot G = {\mathcal L}_\eta G$ for some vector field $\eta$ on $P$ such that $\eta|_{\partial P} = 0$.
\begin{quote}
A surface $M \subset \R^3$ is infinitesimally rigid $\Leftrightarrow$ every curvature preserving deformation $\dot G$ such that $\dot G|_{TM} = 0$ is trivial.
\end{quote}

The space $\cG_{\mathrm{triv}}$ of trivial infinitesimal deformations is very large. Therefore it is convenient to consider only $\dot G$ from a subspace $\cG_0 \subset \cG$ in the space of all infinitesimal deformations such that $\cG_{\mathrm{triv}} + \cG_0 = \cG$ and such that the intersection $\cG_{\mathrm{triv}} \cap \cG_0$ is reasonably small. We take as $\cG_0$ the tangent space to the space of warped product metrics of the following form:
$$
\widetilde{g}_r = d\rho^2 + \rho^2 \, \left( \frac{g - dr \otimes dr}{r^2} \right).
$$
Here $g$ is the Riemannian metric on $M \subset \R^3$, and $r \colon M \to \R_+$ is a smooth function, so that $\widetilde{g}_r$ is a Riemannian metric on $\R_+ \times M \cong \R^3 \setminus \{0\}$. The metric $\wg_r$ is flat, if $r$ measures the distance from $0 \in \R^3$ (assumed to lie within $P$). For every $r$, the restriction of $\widetilde{g}_r$ to the surface $\rho = r(x)$ is $g$.

The problem of isometric infinitesimal deformations can now be reformulated as describing those variations $\dot r$ of the function $r$ that leave the metric $\widetilde{g}_r$ flat in the first order. Trivial variations $\dot r$ arise from moving the coordinate origin and hence form a space of dimension~$3$. Note that the origin may be a singular point of the metric $\widetilde{g}_r$, so that our space $\cG_0$ is not really a subspace of $\cG$.

The curvature of a warped product metric $\widetilde{g}_r$ can be described by a single function $\sec \colon \Sph^2 \to \R$ (the sectional curvature in the planes tangent to $M$). Therefore the infinitesimal rigidity of $M$ is equivalent to
\begin{equation*}
\label{eqn:IntroSecDot}
\mbox{If }\sec^\bigdot = 0, \mbox{ then }\dot r \mbox{ is trivial.}
\end{equation*}

\subsection{Infinitesimal rigidity of Einstein manifolds}
\label{subsec:IntroKoiso}
In \cite{Koi78}, Koiso proved the infinitesimal rigidity of compact closed Einstein manifolds under certain restrictions on curvature. His method can be described as the Bochner technique applied to the second derivative of the Hilbert-Einstein functional. Our proof of the infinitesimal rigidity of strictly convex closed surfaces emulates Koiso's argument in a certain sense. Koiso also chooses a subspace $\cG_0$ in the space of all infinitesimal deformations of the metric, but our choice of $\cG_0$ is very specific and works only for manifolds homeomorphic to the ball.

\subsection{New proof of the infinitesimal rigidity of strictly convex surfaces}
\label{subsec:IntroOurProof}
The \emph{Hilbert-Ein\-stein functional} on the space of Riemannian metrics on a $3$--dimensional compact manifold $P$ with boundary is defined as
$$
\HE(\wg) = \frac12 \int\limits_P \scal \, \dvol + 2 \int\limits_{\partial P} H \, \darea,
$$
where $\scal$ is the scalar curvature of the metric $\wg$, and $H$ is the mean curvature (half of the trace of the shape operator), cf. \cite{Esc96,Ara03}. In our situation $P$ is a ball, and the metric $\wg = \wg_r$ is a warped product metric, see Subsection \ref{subsec:IntroApproach}. This allows to express $\HE$ as an integral over the boundary $M = \partial P$, see Theorem \ref{thm:HE}.

We prove the following formula for the derivative of $\HE$ in the direction~$\dot r$:
$$
\HE^\bigdot = \int\limits_M \dot r \frac{\sec}{\cos\alpha} \, \darea,
$$
where $\alpha$ is a certain function on $M$ determined by $r$ and $g$ (compare this with the well-known formula for $\HE^\bigdot$ for an arbitrary variation of a metric $\wg$ on a compact closed manifold).

It follows that the second derivative of $\HE$ in the direction $\dot r$ equals
$$
\HE^{\bigdot\bigdot} = \int\limits_M \dot r \left( \frac{\sec}{\cos \alpha} \right)^\bigdot \, \darea.
$$
If the variation $\dot r$ is curvature-preserving, then we have $\sec^\bigdot = 0$. As the initial metric $\wg_r$ is flat, we also have $\sec = 0$. Hence $\HE^{\bigdot\bigdot}$ vanishes. On the other hand, an integration by parts yields
$$
\HE^{\bigdot\bigdot} = \int\limits_M 2h \det \dot B \, \darea,
$$
where $h$ is the support function of $M$ (i.~e. distance from $0$ to tangent planes) and $B$ is the shape operator. A simple algebraic lemma shows that $\det \dot B \le 0$ and that $\det \dot B$ vanishes only if $\dot B$ vanishes. As $h > 0$, it follows that the integral in the last equation vanishes only if $\dot B = 0$. Finally, $\dot B = 0$ implies that $\dot r$ is a trivial variation.

\subsection{Discrete and smooth, classical and modern}
The proof sketched in the previuos subsection looks as a logical development of ideas of Koiso and others. However, we came to it on a different way, namely from its discrete analog \cite{Izm11a}. On the other hand, Blaschke's elegant and somewhat mysterious proof \cite{Bla21} originated from another proof of another theorem that used the second derivative of a certain functional. Therefore our proof can also be seen as a logical development of ideas of Blaschke and others.

It turns out that there are two infinitesimal rigidity theorems, metric (the one stated above) and Gauss. Metric infinitesimal rigidity is related to the Weyl problem in the same way as Gauss infinitesimal rigidity is related to the Minkowski problem. Besides, these rigidity theorems are \emph{in two ways} dual to each other. We now proceed to explaining these statements.

\subsection{Blaschke's proof of the infinitesimal rigidity of strictly convex surfaces}
\label{subsec:IntroBlaschke}
Here is a sketch of the argument from \cite{Bla21} reproduced in Subsection \ref{subsec:BlaschkeProof} in a greater detail.

Every isometric infinitesimal deformation $\xi$ of a surface $M \subset \R^3$ has an associated \emph{rotation vector field} $\eta \colon M \to \R^3$. A deformation is trivial if and only if its rotation field is constant. One can show that the differential $d\eta$ maps $TM$ to itself (similar to the differential of the field of unit normals). Performing an integration by parts, Blaschke proves the equation
\begin{equation}
\label{eqn:IntroHDet}
\int\limits_M 2 h \det d\eta \, \darea = 0,
\end{equation}
where $h$ is the support function of $M$. An ingenious argument shows that $\det d\eta$ is non-positive, and vanishes only if $d\eta$ does. As without loss of generality $h$ is positive, equation \eqref{eqn:IntroHDet} implies $d\eta = 0$. Hence $\eta$ is constant, and the infinitesimal deformation $\xi$ is trivial.

Blaschke's proof can be demystified a bit by observing that $d\eta = J \dot B$, where $J \colon TM \to TM$ is the rotation by $90^\circ$. This explains the above properties of $\det d\eta$ in a more conceptual way. Besides, this relates Blaschke's proof to the proof sketched in Subsection \ref{subsec:IntroOurProof}. However, this relation is not quite straightforward, as we will see in Subsection \ref{subsec:IntroDarboux}.

\subsection{Gauss infinitesimal rigidity and origin of Blaschke's proof}
\label{subsec:IntroBlaschkeOrigin}
Let $M \subset \R^3$ be a strictly convex smooth closed surface, and let $\eta \colon M \to \R^3$ be an infinitesimal deformation of $M$. Assume that the embeddings $\psi_t \colon M \to \R^3$, $\psi_t(x) = x + t \eta(x)$ preserve the Gauss map: $d\psi_t(T_xM) \parallel T_xM$. Clearly, this is equivalent to $d\eta(TM) \subset TM$. Denote by $K_t \colon M \to \R$ the Gauss curvature of the embedding $\psi_t$, and by $\dot K$ the derivative of $K_t$ at $t=0$. We call an infinitesimal deformation $\eta$ \emph{isogauss}, if $d\eta(TM) \subset TM$ and $\dot K = 0$. If $\eta$ is constant, that is $\psi_t$ parallelly translates $M$, then $\eta$ is called a \emph{trivial} isogauss infinitesimal deformation.
\begin{quote}
Every strictly convex smooth closed surface $M$ is Gauss infinitesimally rigid, that is every isogauss infinitesimal deformation of $M$ is trivial.
\end{quote}
This theorem is implicitly contained in Hilbert's treatment \cite{Hil10} of Minkowski's \emph{mixed volumes theory}.

Blaschke \cite{Bla12} observed that certain equations in Hilbert's work coincide with equations of the theory of isometric infinitesimal deformations. Weyl \cite{Weyl17} concretized this observation as follows:
\begin{quote}
The rotation vector field of an isometric infinitesimal deformation of $M$ is an isogauss infinitesimal deformation of $M$.
\end{quote}
Thus the infinitesimal rigidity theorem from Subsection \ref{subsec:IntroThm} can be proved by applying the Minkowski-Hilbert argument to the rotation vector field of the deformation. Such a proof was written down by Blaschke in \cite{Bla21} in a concise and self-consistent form.

\subsection{Gauss infinitesimal rigidity through the second derivative of the volume}
\label{subsec:IntroGaussVol}
Mixed volumes theory basically deals with derivatives of the volume bounded by $M$ with respect to infinitesimal deformations $\eta$ such that $d\eta(TM) \subset TM$. In the proof of the Gauss infinitesimal rigidity that can be extracted from \cite{Hil10}, one computes the second derivative $\Vol^{\bigdot\bigdot}$ under an isogauss deformation in two different ways. One computation yields zero, while the other one yields
$$
\Vol^{\bigdot\bigdot} = \int\limits_M 2h \det d\eta \, \darea,
$$
the right hand side of which coincides with the left hand side of \eqref{eqn:IntroHDet}. The operator $d\eta \colon TM \to TM$ turns out to be related to the derivative of $B^{-1}$ under the deformation $\eta$, which accounts for the properties $\det d\eta \le 0$ and $\det d\eta = 0 \Leftrightarrow d\eta = 0$.

\subsection{Darboux's wreath of 12 surfaces}
\label{subsec:IntroDarboux}
The arguments in all three Subsections \ref{subsec:IntroOurProof}, \ref{subsec:IntroBlaschke}, and \ref{subsec:IntroGaussVol} end by considering the same equation \eqref{eqn:IntroHDet}. As just explained, the last two arguments are identical. However, the argument sketched in Subsection \ref{subsec:IntroOurProof} arrives at \eqref{eqn:IntroHDet} in a different way. It makes use of another relation between the Gauss and metric infinitesimal rigidities.
\begin{quote}
The translation vector field of an isometric infinitesimal deformation of $M$ is an isogauss infinitesimal deformation of~$M^*$.
\end{quote}
Here $M^*$ is the polar dual of the surface $M$, and the \emph{translation vector field} $\tau$ is another vector field canonically associated with every isometric infinitesimal deformation $\xi$. The above correspondence together with that at the end of Subsection \ref{subsec:IntroBlaschkeOrigin} form a part of the \emph{Darboux wreath}, see Subsection \ref{subsec:DarbouxWreath} for more details. In fact, the correspondencies are local, so that $M$ may be an open disk embedded in $\R^3$ with non-vanishing Gauss curvature.

In order to compute $\HE^{\bigdot\bigdot}$ under the deformation $\xi$ of $M$, one performs the same integration by parts as when $\Vol^{\bigdot\bigdot}$ under the deformation $\tau$ of $M^*$ is computed. Besides, the deformation $\tau$ of $M^*$ is in some sense polar dual to the deformation $\xi$ of $M$. This suggests that there is a subtle relation between the Hilbert-Einstein functional of $P$ and the volume of $P^*$.

\subsection{Polar duality between the volume and the Hilbert-Einstein functional}
In spherical and hyperbolic geometries there is a straightforward relation between the Hilbert-Einstein functional and the volume of the dual, see Subsections \ref{subsec:HEPolDual} and \ref{subsec:HypDeSitter}. On the other hand, an analog in $\Sph^3$ and $\H^3$ of the Gauss infinitesimal rigidity in $\R^3$ is the infinitesimal rigidity with respect to the third fundamental form of the boundary. Since the third fundamental form equals the first fundamental form of the polar dual, the metric rigidity is directly related to the Gauss rigidity of the polar dual. As a result, variational approaches through $\HE$ and $\Vol$ to the respective kinds of rigidity are equivalent. For details, see Subsection \ref{subsec:ApprSphere}.

Note that the polar dual to a surface $M \subset \H^3$ is a surface in the de Sitter space, see Subsection \ref{subsec:HypDeSitter}.

\subsection{Minkowski and Weyl problems}
Gauss and metric infinitesimal rigidity can be interpreted as ``infinitesimal'' uniqueness in the Minkowski and Weyl problems. Besides, the Minkowski problem for polyhedra and for general convex surfaces can be solved by maximizing the functional $\Vol$ over some linear subspace in the space of all surfaces. A similar variational approach to the Weyl problem was suggested by Blaschke and Herglotz \cite{BH37}: every solution corresponds to a critical point of the Hilbert-Einstein functional on the space of all extensions of the given metric $g$ on the sphere to a metric $\wg$ on the ball, cf. Subsections \ref{subsec:IntroApproach} and \ref{subsec:IntroOurProof}. This approach was not realized, as the functional is neither concave nor convex, and there is no immediate reduction in the spirit of $\Vol$ and Minkowski problem above.

As a modification of Blaschke-Herglotz approach, we suggest to consider only extensions of the form $\wg_r$ described in Subsection \ref{subsec:IntroApproach}. Further details can be found in Subsection \ref{subsec:NewMinkWeyl}.

\subsection{General convex surfaces and other perspectives}
The Hilbert-Einstein functional has a discrete analog, and most of the constructions presented here carry over to polyhedral surfaces, see \cite{Izm11a}. A generalization to arbitrary convex surfaces suggests itself. In particular, it would be nice to find a similar proof of the infinitesimal rigidity of convex surfaces without flat pieces, a notoriously difficult theorem of Pogorelov \cite[Chapter IV]{Pog73}.

Another possible development is to extend our method to Einstein manifolds with boundary, cf. Subsection \ref{subsec:IntroKoiso}, in particular to hyperbolic manifolds with smooth convex boundary. (Infinitesimal rigidity of the latter is proved by Schlenker in \cite{Scl06} with a different method.) An extension to the case of non-regular boundary, as in the previous paragraph, would allow to prove the infinitesimal rigidity of convex cores, which could lead to a solution of the pleating lamination conjecture. See Section \ref{sec:Directions}.

\subsection{Acknowledgements}
I would like to thank Fran\c{c}ois Fillastre and Jean-Marc Schlenker for interesting discussions and useful remarks.

\section{Gauss rigidity of smooth convex surfaces}
\label{sec:GaussRigSm}
\subsection{Definitions and statement of the theorem}
\label{subsec:DefAndThm}
Let $M \subset \R^3$ be a smooth surface.

\begin{dfn}
An \emph{infinitesimal deformation} of a smooth surface $M \subset \R^3$ is a smooth vector field along $M$, i.~e. a smooth section of the vector bundle $T\R^3|_M \to M$.
\end{dfn}

By using the natural trivialization of the bundle $T\R^3|_M$, we can view an infinitesimal deformation of $M$ as a map
\begin{equation}
\label{eqn:EtaGauss}
\eta \colon M \to \R^3.
\end{equation}
The geodesic flow along an infinitesimal deformation $\eta$ produces a family of smooth maps
\begin{gather*}
\phi_t \colon M \to \R^3,\\
\phi_t = \exp(t\eta).
\end{gather*}
Equivalently, if $\eta$ is viewed as the map \eqref{eqn:EtaGauss}, then we have
\begin{equation}
\label{eqn:PhiT}
\phi_t(x) = x + t \eta(x).
\end{equation}
We restrict our attention to $t \in (-\epsilon, \epsilon)$ for some small positive $\epsilon$, so that all $\phi_t$ are smooth embeddings. Put
$$
M_t := \phi_t(M).
$$
Let $K_t \colon M_t \to \R$ be the Gauss curvature of $M_t$. Denote by $\dot K \colon M \to \R$ its derivative at $t=0$:
\begin{equation}
\label{eqn:DotK}
\dot K := \lim_{t \to 0} \frac{K_t \circ \phi_t - K}{t}.
\end{equation}

\begin{dfn}
\label{dfn:Isogauss}
An infinitesimal deformation $\eta$ is called \emph{isogauss}, if the following two conditions are fulfilled:
\begin{enumerate}
\item \label{it:IsogaussA} $T_xM$ is parallel to $T_{\phi_t(x)} M_t$ for all $x \in M$ and for all $t \in (-\epsilon, \epsilon)$;
\item $\dot K(x) = 0$ for all $x \in M$.
\end{enumerate}
\end{dfn}

In other words, the flow of an isogauss infinitesimal deformation preserves the Gauss map and preserves the Gauss curvature in the first order of $t$.

\begin{dfn}
\label{dfn:DeformTrivGauss}
An infinitesimal deformation $\eta$ of is called \emph{trivial}, if
$$
\eta(x) = a \quad \mbox{ for all } x \in M,
$$
for some $a \in \R^3$.
\end{dfn}

If $\eta$ is trivial, then the map $\phi_t$ is a parallel translation. Hence every trivial infinitesimal deformation is isogauss. If a surface has no isogauss infinitesimal deformations other than trivial ones, then we call it \emph{Gauss infinitesimally rigid}.

\begin{thm}
\label{thm:GaussRig}
Let $M \subset \R^3$ be a convex closed surface with everywhere positive Gauss curvature. Then $M$ is Gauss infinitesimally rigid
\end{thm}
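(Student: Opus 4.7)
The plan is to follow the Hilbert--Minkowski strategy sketched in Subsection~\ref{subsec:IntroGaussVol}: compute the second derivative of the enclosed volume under an isogauss deformation in two ways, one giving zero and the other an integral whose sign is pointwise controlled. First I would parametrize $M$ by $\Sph^2$ via the Gauss map and work with the support function $h \co \Sph^2 \to \R$. The assumption that $T_xM$ is parallel to $T_{\phi_t(x)} M_t$ forces the Gauss maps of $M$ and $M_t$ to agree, hence
$$
h_t(N) = h(N) + t\langle \eta(x(N)), N\rangle = h(N) + t\,\dot h(N)
$$
is exactly linear in $t$. In these coordinates the inverse Weingarten map of $M_t$ is $W_t = \Hess_{\Sph^2} h_t + h_t\cdot I$ and $K_t = 1/\det W_t$, so $\dot K = 0$ is equivalent to the pointwise vanishing of the mixed determinant $D(W, \dot W)$, where $\dot W = \Hess\dot h + \dot h\cdot I$.

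Next I would invoke Minkowski's mixed volume theory: $\Vol_t = \Vol(\phi_t(M))$ is the diagonal value of the symmetric trilinear form
$$
\Vol(h_1, h_2, h_3) = \tfrac13 \int_{\Sph^2} h_1\, S_{11}(h_2, h_3)\, d\sigma,
$$
whose symmetry is proved by integration by parts on $\Sph^2$. Expanding $\Vol_t = \Vol(h+t\dot h, h+t\dot h, h+t\dot h)$ gives $\Vol^{\bigdot\bigdot}|_{t=0} = 6\,\Vol(\dot h,\dot h, h)$. Placing $\dot h$ in the first slot and using $S_{11}(\dot h, h) = D(W, \dot W) = 0$ yields $\Vol^{\bigdot\bigdot} = 2\int_{\Sph^2}\dot h\, S_{11}(\dot h, h)\, d\sigma = 0$, while placing $h$ in the first slot gives $\Vol^{\bigdot\bigdot} = 2\int_{\Sph^2} h\, \det \dot W\, d\sigma$. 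Equating the two:
$$
\int_{\Sph^2} h\, \det(\Hess \dot h + \dot h\cdot I)\, d\sigma = 0.
$$

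To finish I would apply the pointwise algebraic lemma: if $A, B$ are symmetric $2 \times 2$ matrices with $A$ positive definite and $D(A, B) = 0$, then $\det B \le 0$ with equality iff $B = 0$. Indeed, the generalized eigenvalues $\lambda_1, \lambda_2$ of $B$ relative to $A$ are real and satisfy $\lambda_1 + \lambda_2 = 2D(A,B)/\det A = 0$, so $\det B/\det A = -\lambda_1^2 \le 0$. Strict convexity of $M$ makes $W$ positive definite, so $\det \dot W \le 0$ pointwise; after translating the origin into the body bounded by $M$ we may assume $h > 0$, and the integral identity then forces $\det \dot W \equiv 0$, whence $\dot W = 0$, i.e.\ $\Hess \dot h + \dot h\cdot I = 0$ on $\Sph^2$. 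The kernel of this operator consists precisely of the linear functions $\dot h(N) = \langle a, N \rangle$, $a \in \R^3$; since $\eta(x(N)) = \dot h(N)\, N + \nabla^{\Sph^2}\dot h(N)$, this forces $\eta \equiv a$, a trivial deformation.

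The main obstacle I expect is the dictionary between the geometric data ($\eta$, $d\eta$) and the support-function data ($\dot h$, $\dot W$), and the careful invocation of Minkowski's symmetry; once set up, the two-way computation of $\Vol^{\bigdot\bigdot}$ and the sign-control lemma are very short. The concluding algebraic lemma depends essentially on both the strict convexity of $M$ (positive definiteness of $W$) and on $\dim M = 2$, and fails for $3\times 3$ matrices.
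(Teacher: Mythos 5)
Your proof is correct, but it takes the support-function route rather than the one the paper uses for Theorem \ref{thm:GaussRig}. The paper's proof (Subsection \ref{subsec:ProofGaussRig}) works directly on $M$ with the operator $d\eta \colon TM \to TM$: it obtains the integral identity $\int_M 2h\det(d\eta)\,\darea = 0$ by applying Stokes' theorem to the vector-valued form $\dvol(p\wedge\eta\wedge d\eta)$ (Lemma \ref{lem:IntZero}), and since $d\eta$ need not be self-adjoint, the sign control (Lemma \ref{lem:DetDEtaNeg}) requires the full signature $(+,+,-,-)$ of the mixed determinant on $\End(T_xM)$ together with $\det$-orthogonality of $d\eta$ to \emph{two} positive vectors, $\id$ and $JB$. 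Your version pulls everything back to $\Sph^2$ via the Gauss map and phrases it in terms of $\dot h$ and $\dot W = \Hess\dot h + \dot h\,\id$; this is essentially the alternative the paper itself sketches in Subsection \ref{subsec:HessSupp} (equation \eqref{eqn:VolDDotBInv}). What your route buys is a simpler algebraic lemma --- $\dot W$ is automatically self-adjoint, so orthogonality to the single positive vector $W$ suffices (Corollary \ref{cor:AB}), and your generalized-eigenvalue argument for it is a valid substitute for the paper's signature computation. What it costs is that the integration by parts is hidden inside the asserted symmetry of the Minkowski trilinear form $V(h_1,h_2,h_3)$; that symmetry is a genuine classical fact, but proving it is exactly the Stokes-theorem computation the paper carries out explicitly, so you should either cite it precisely or supply the divergence argument (it rests on $\Hess h + h\,\id$ being a Codazzi tensor on $\Sph^2$). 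Note also that your setup requires the Gauss map to be globally invertible from the start, whereas the paper's lemmas are local and do not; and your closing caveat about $3\times 3$ matrices is fair, though the theorem does extend to hypersurfaces in $\R^d$ via Alexandrov's mixed discriminant inequality.
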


We were unable to find a reference for Theorem \ref{thm:GaussRig}, but its statement and proof are implicitely contained in Hilbert's treatment \cite{Hil10} of Minkowski's mixed volumes theory, see also \cite{BF87} and \cite[Section 2.3]{Hoer07}. For the connection with the Minkowski problem see discussion in Subsection \ref{subsec:MWUniq}.

Theorem \ref{thm:GaussRig} holds for hypersurfaces in $\R^d$ as well, with $K$ interpreted as the determinant of the shape operator.

\subsection{Variation of the Gauss curvature}
\label{subsec:VarGaussGauss}
Let $\nu$ be a local unit normal field to $M$, and let $p$ be the position vector field on $M$. Similarly to $\eta$, these two vector fields can be viewed as smooth maps
\begin{gather*}
\nu \colon M \to \R^3,\\
p \colon M \to \R^3.
\end{gather*}
From this point of view, $p$ is the inclusion map $p(x) = x$.

Consider the differential
$$
d\nu \colon TM \to \R^3.
$$
The map $d\nu$ is obviously related to the covariant derivative
$$
\wnabla\nu \colon TM \to T|_M \R^3,
$$
namely, $d\nu(X)$ is the image of $\wnabla_X \nu$ under the map $T\R^3 \to \R^3$ which translates the starting point of a vector to the origin. We define the shape operator $B \colon TM \to TM$ as $B(X) = \wnabla_X\nu$ and write by an abuse of notation
$$
d\nu = B \colon TM \to TM.
$$
Similarly, the differential $dp$ interpreted as the covariant derivative is a map from $TM$ to $T|_M \R^3$. It is easy to see that
$$
dp = \id \colon TM \to TM.
$$
Finally, condition \eqref{it:IsogaussA} from Definition \ref{dfn:Isogauss} is clearly equivalent to $d\eta(X) \parallel TM$ for all $X$, so that we have yet another operator
$$
d\eta \colon TM \to TM.
$$

\begin{lem}
\label{lem:VarK}
Let $\eta$ be an infinitesimal deformation of a surface $M$ such that the condition \rm{(\ref{it:IsogaussA})} from Definition \ref{dfn:Isogauss} is satisfied. Then we have
\begin{equation}
\label{eqn:DotK TrEta}\dot K = - K \tr (d\eta).
\end{equation}
\end{lem}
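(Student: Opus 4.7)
The plan is to exploit the fact that, when the tangent planes stay parallel, the Gauss map is unchanged along the flow, and thereby reduce the variation of $K$ to the variation of the area element of $M_t$ viewed as a function on $M$.

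First, I would set up notation. Let $\nu_t \co M_t \to \Sph^2$ be the Gauss map of $M_t$. Condition (\ref{it:IsogaussA}) says $T_{\phi_t(x)}M_t$ is parallel to $T_xM$, so under the natural identification $T_{\phi_t(x)}\R^3 \cong T_x\R^3 \cong \R^3$ the two unit normals agree up to sign, and by continuity in $t$ we get $\nu_t \circ \phi_t = \nu$. Next, recall the intrinsic definition of the Gauss curvature through the area element: $K_t \, \darea_{M_t} = \nu_t^*\,\darea_{\Sph^2}$. Pulling back by $\phi_t$ and using $\nu_t \circ \phi_t = \nu$, I obtain
$$
(K_t \circ \phi_t)\, \phi_t^*\darea_{M_t} \;=\; \nu^*\darea_{\Sph^2} \;=\; K\,\darea_M.
$$

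Now let $J_t \co M \to \R$ be the Jacobian of $\phi_t \co M \to M_t$ with respect to the intrinsic area elements, i.~e. $\phi_t^*\darea_{M_t} = J_t\,\darea_M$. The previous displayed equation gives $K_t\circ\phi_t = K/J_t$, so differentiating at $t=0$ (with $J_0 = 1$) yields
$$
\dot K \;=\; -K\,\dot J,\qquad \dot J := \left.\frac{d}{dt}\right|_{t=0} J_t.
$$
It remains to identify $\dot J$ with $\tr(d\eta)$. The differential $d\phi_t \co T_xM \to T_{\phi_t(x)}M_t$ equals $\id + t\,d\eta$ as a map $T_xM \to \R^3$; by condition (\ref{it:IsogaussA}) its image lies in $T_{\phi_t(x)}M_t$, which is parallel to $T_xM$, so after the identification $T_{\phi_t(x)}M_t \cong T_xM$ we have $d\phi_t = \id + t\,d\eta$ as an endomorphism of $T_xM$. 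Since parallel planes inherit the same Euclidean inner product, $J_t = \det(d\phi_t) = \det(\id + t\,d\eta)$, and therefore $\dot J = \tr(d\eta)$, which combined with $\dot K = -K\dot J$ proves \eqref{eqn:DotK TrEta}.

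No real obstacle arises; the only delicate point is the bookkeeping that identifies the intrinsic area element of $M_t$ at $\phi_t(x)$ with that of $M$ at $x$, which is legitimate precisely because the two tangent planes are parallel in $\R^3$ and thus carry identical induced metrics.
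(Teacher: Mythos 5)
Your proof is correct and follows essentially the same route as the paper: both hinge on $\nu_t\circ\phi_t=\nu$ and on the identity $(K_t\circ\phi_t)\cdot\det(\id+t\,d\eta)=K$, which the paper obtains by taking determinants in $d\nu_t\circ d\phi_t=d\nu$ and you obtain equivalently by pulling back the area form of $\Sph^2$ under the Gauss map. The difference is purely one of bookkeeping, not of substance.
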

\begin{proof}
Consider the surface $M_t$, see \eqref{eqn:PhiT}. Let $\nu_t$ be the unit normal field to $M_t$ (inducing the same co-orientation as $\nu$ on $M$), and $\eta_t$ be the position vector field. Condition (\ref{it:IsogaussA}) implies $\nu_t \circ \phi_t = \nu$. It follows that
$$
d\nu_t \circ d\phi_t = d\nu.
$$
By taking the determinant, we obtain
$$
(K_t \circ \phi_t) \cdot \det(\id + t d\eta) = K.
$$
After substituting in \eqref{eqn:DotK} and observing that
$$
\det(\id + t d\eta) = 1 + t \tr (d\eta) + t^2 \det(d\eta)
$$
we have 
$$
\dot K = \lim_{t \to 0} \frac{(K_t \circ \phi_t)(1 - \det(\id + t d\eta))}{t} = -K \tr(d\eta).
$$
The lemma is proved.
\end{proof}

\subsection{Proof of Theorem \ref{thm:GaussRig}}
\label{subsec:ProofGaussRig}
The proof is based on two lemmas.

\begin{lem}
\label{lem:DetDEtaNeg}
Let $\eta$ be an isogauss infinitesimal deformation of $M$. Assume that $M$ has a positive Gauss curvature at $x$. Then at this point $x$ we have
\begin{enumerate}
\item \label{it:DetNeg} $\det(d\eta) \le 0$;
\item \label{it:DetZero} the equality $\det(d\eta) = 0$ holds only if $d\eta = 0$.
\end{enumerate}
\end{lem}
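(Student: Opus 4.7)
The plan is to combine the two conditions of Definition \ref{dfn:Isogauss} via a short linear-algebra argument at the point $x$, applied to the endomorphism $A := d\eta|_{T_xM}$ of the $2$-dimensional space $T_xM$. Lemma \ref{lem:VarK} already supplies $\tr A = 0$ from the hypotheses $\dot K(x) = 0$ and $K(x) > 0$; the substantive task is to extract from condition (\ref{it:IsogaussA}) a stronger structural constraint on $A$ which, together with the trace-zero property, forces $\det A \le 0$ with equality only at $A = 0$.

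To this end, I would first normalize orientations by continuity so that $\nu_t \circ \phi_t = \nu$ along the deformation, whence $d\nu_t \circ d\phi_t = d\nu = B$ as maps $T_xM \to T_xM$ (after the standard trivialization of $T\R^3|_M$). Pulling back the second fundamental form of $M_t$ via $\phi_t$ and using this identity gives
$$
\phi_t^*\II_t(X, Y) = \II_t(d\phi_t X, d\phi_t Y) = \langle BX, d\phi_t Y\rangle = \langle BX, Y\rangle + t\,\langle BX, d\eta Y\rangle.
$$
Since $\phi_t^*\II_t$ is symmetric in $(X, Y)$ for each $t$, so is the coefficient of $t$; that is, the bilinear form $(X, Y) \mapsto g(X, B\,d\eta Y)$ is symmetric, i.e.\ $B\,d\eta$ is $g$-self-adjoint. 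I view this as the main step of the proof, and the only place where condition (\ref{it:IsogaussA}) enters beyond its infinitesimal consequence $d\eta(T_xM) \subset T_xM$.

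The rest is linear algebra. Because $M$ is convex with $K(x) > 0$, the outward-normal shape operator $B$ is $g$-symmetric positive definite, so $B = L^2$ for a symmetric positive definite $L$; the conjugate $L A L^{-1} = L^{-1}(B A) L^{-1}$ is then $g$-symmetric, so $A$ is similar to a symmetric operator and hence diagonalizable with real eigenvalues $\lambda_1, \lambda_2$. The trace-zero condition forces $\lambda_2 = -\lambda_1$, so $\det A = -\lambda_1^2 \le 0$, which is (\ref{it:DetNeg}); and $\det A = 0$ forces $\lambda_1 = 0$, which together with diagonalizability gives $A = 0$, which is (\ref{it:DetZero}).
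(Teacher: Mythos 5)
Your proof is correct, and the two constraints you extract at $x$ --- $\tr(d\eta)=0$ and the $g$-self-adjointness of $B\,d\eta$ --- are exactly equivalent to the two orthogonality relations $\det(\id,d\eta)=0$ and $\det(JB,d\eta)=0$ on which the paper's proof rests: for $2\times 2$ operators one has $\det(JB,d\eta)=\tfrac12\bigl(\tr(JB)\tr(d\eta)-\tr(JB\,d\eta)\bigr)=-\tfrac12\tr(JB\,d\eta)$, and $\tr(JC)=0$ holds precisely when $C$ is self-adjoint. What genuinely differs is how you obtain the second constraint and how you finish. The paper gets $\det(JB,d\eta)=0$ by differentiating $\langle\nu,d\eta\rangle=0$ and using flatness of the ambient connection; you read the same symmetry off the pullback $\phi_t^*\II_t$, which is a clean and exact computation since $\nu_t\circ\phi_t=\nu$ makes $\phi_t^*\II_t$ affine in $t$, so symmetry of the $t$-coefficient is immediate. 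For the conclusion, the paper works in the four-dimensional space $\End(T_xM)$ with the quadratic form $\det$ of signature $(+,+,-,-)$ (Lemma \ref{lem:DetSign}) and uses that $d\eta$ lies in the $\det$-orthogonal complement of the positive $2$-plane spanned by $\id$ and $JB$; you instead conjugate by $L=B^{1/2}$ to see that $d\eta$ is diagonalizable with real eigenvalues, which together with $\tr(d\eta)=0$ gives $\det(d\eta)=-\lambda^2\le 0$ and pins down $d\eta=0$ in the equality case. Your route is more elementary and self-contained, while the paper's packages the same facts into the mixed-determinant formalism reused elsewhere (Corollary \ref{cor:AB}, Corollary \ref{cor:DetDotBNeg}). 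One cosmetic repair: the lemma assumes only $K(x)>0$, not convexity of $M$, so positive definiteness of $B$ at $x$ should be arranged by the choice of co-orientation --- $\det B=K>0$ makes $B$ definite, and replacing $\nu$ by $-\nu$ replaces $B$ by $-B$ without affecting either the self-adjointness of $B\,d\eta$ or the rest of your argument.
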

\begin{proof}
On the four-dimensional vector space $\End(T_xM)$, consider the quadratic form $\det$. The associated symmetric bilinear form is called the \emph{mixed determinant}, see Subsection \ref{subsec:MixDetOp} for more details. As $\det(\id, A) = \tr A$ for all $A$, equation \eqref{eqn:DotK TrEta} implies
\begin{equation}
\label{eqn:DetIdEta}
\det(\id, d\eta) = 0,
\end{equation}
because we have $\dot K = 0$ and $K \ne 0$. By differentiating the identity
$$
\langle \nu, d\eta \rangle = 0
$$
and using the fact that the connection $\wnabla$ is flat, we also obtain
\begin{equation}
\label{eqn:DetJNuEta}
\det(JB, d\eta) = 0,
\end{equation}
where $J \colon T_xM \to T_xM$ is the rotation by $\frac{\pi}{2}$. Thus the vector $d\eta \in \End(T_xM)$ is orthogonal to the vectors $\id$ and $JB$ with respect to the form $\det(\cdot \,, \cdot)$.

On the other hand, $\det$ takes positive values on both $\id$ and $JB$; the latter value is $K > 0$ by our assumption. As the signature of $\det$ is $(+, +, -, -)$ and vectors $\id$ and $JB$ are linearly independent, this implies that $d\eta$ lies in a two-dimensional subspace of $\End(T_xM)$ on which $\det$ is negative definite. This implies both statements of the lemma.
\end{proof}

\begin{dfn}
\label{dfn:SuppFunc}
Let $M \subset \R^3$ be an embedded orientable surface with a chosen unit normal field $\nu$. The \emph{support function}
$$
h \colon M \to \R
$$
sends every point $x$ to the (signed) distance of the tangent plane $T_xM$ from the coordinate origin in $\R^3$. The distance is considered positive if the normal $\nu$ points away from the origin.
\end{dfn}

Clearly, we have $h(x) = \langle \nu(x), x \rangle$. In terms of the position vector field~$p$ this can be written as
$$
h = \langle \nu, p \rangle.
$$

\begin{lem}
\label{lem:IntZero}
For every isogauss infinitesimal deformation $\eta$ of a closed surface $M \subset \R^3$ we have
\begin{equation}
\label{eqn:IntZero}
\int\limits_M 2 h \det(d\eta)\, \darea_M = 0,
\end{equation}
where $h$ is the support function of $M$.
\end{lem}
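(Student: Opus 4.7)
The plan is to write $2h\det(d\eta)\,\darea_M$ as a scalar $2$-form obtained from a triple determinant of $\R^3$-valued differential forms on $M$, and then integrate by parts via Stokes' theorem on the closed surface $M$.

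Let $p\co M\to\R^3$ denote the position vector field, regarded as an $\R^3$-valued $0$-form; its exterior derivative $dp$ is the inclusion $TM\hookrightarrow T|_M\R^3$. The key pointwise identity is: for any $v\in\R^3$ and tangent vectors $X,Y\in T_xM$ one has
\[
\det(v,X,Y)=\langle v,\nu\rangle\,\darea_M(X,Y),
\]
because $X\times Y=\darea_M(X,Y)\,\nu$. Combined with the isogauss tangency $d\eta(T_xM)\subset T_xM$ (applied with $v=p$ and $X,Y$ of the form $d\eta(\cdot)$), this yields the identity of scalar $2$-forms on $M$,
\[
\det(p,d\eta,d\eta)=2h\det(d\eta)\,\darea_M,
\]
where $\det(p,d\eta,d\eta)$ denotes the triple determinant of the $\R^3$-valued $0$-form $p$ with the $\R^3$-valued $1$-form $d\eta$ against itself.

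Since $d(d\eta)=0$, the graded Leibniz rule for this triple determinant simplifies to
\[
d[\det(p,\eta,d\eta)]=\det(dp,\eta,d\eta)+\det(p,d\eta,d\eta).
\]
Applying Stokes' theorem on the closed surface $M$ therefore gives
\[
\int_M 2h\det(d\eta)\,\darea_M=-\int_M\det(dp,\eta,d\eta).
\]
It remains to show the right-hand side vanishes. Evaluating $\det(dp,\eta,d\eta)$ on a positively oriented orthonormal frame $(e_1,e_2)$ of $T_xM$ with $\nu=e_1\times e_2$, using $dp(e_i)=e_i$, decomposing $\eta=\eta^T+f\nu$ with $f=\langle\eta,\nu\rangle$, and once more invoking the tangency $d\eta(e_i)\in T_xM$, a direct computation collapses everything to
\[
\det(dp,\eta,d\eta)=-f\tr(d\eta)\,\darea_M.
\]
Finally, Lemma~\ref{lem:VarK} combined with the isogauss condition $\dot K=0$ gives $K\tr(d\eta)=0$; since Lemma~\ref{lem:IntZero} is applied to closed surfaces with everywhere positive Gauss curvature, this forces $\tr(d\eta)\equiv 0$ and the integral vanishes.

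The main obstacle is purely bookkeeping: tracking signs in the graded Leibniz rule for triple determinants of $\R^3$-valued forms of mixed degrees, and checking that after the tangential/normal decomposition of $\eta$ only the $-f\tr(d\eta)\,\darea_M$ term survives in $\det(dp,\eta,d\eta)$ (the $\eta^T$-contributions are killed because they force $\det$ of three tangent vectors, which is zero).
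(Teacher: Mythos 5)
Your proposal is correct and follows essentially the same route as the paper: identify $2h\det(d\eta)\,\darea_M$ with the triple determinant $\dvol(p\wedge d\eta\wedge d\eta)$, integrate by parts via the graded Leibniz rule and Stokes, and kill the remaining term $\dvol(dp\wedge\eta\wedge d\eta)=-\langle\nu,\eta\rangle\tr(d\eta)\,\darea_M$ using $\tr(d\eta)=0$, which comes from $\dot K=0$ and $K\neq 0$ exactly as in the paper's appeal to \eqref{eqn:DetIdEta}. Your explicit frame computation of $\det(dp,\eta,d\eta)$ and your remark that the vanishing of $\tr(d\eta)$ needs $K\neq 0$ (implicit in the paper, which only applies the lemma to strictly convex surfaces) are both accurate.
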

\begin{proof}
The proof uses partial integration. We will present it in terms of vector-valued differential forms, see Section~\ref{sec:DetForms} for more details.

View a map $\eta \colon M \to \R^3$ as an $\R^3$--valued differential 0--form. The differential $d\eta$ is naturally an $\R^3$--valued differential 1--form. Wedge product of vector-valued forms takes values in the tensor power of $\R^3$. For example, $p \wedge d\eta \wedge d\eta$ is a differential 2--form on $M$ with values in $\R^3 \otimes \R^3 \otimes \R^3$. Consider the linear map
$$
\dvol \colon \R^3 \otimes \R^3 \otimes \R^3 \to \R,
$$
sending a triple of vectors to the signed volume of the parallelepiped spanned by them. By applying $\dvol$ to the coefficient of the form $p \wedge d\eta \wedge d\eta$, we obtain a real-valued 2--form
$$
\dvol(p \wedge d\eta \wedge d\eta) \in \Omega^2(M).
$$
At the same time, $d\eta$ can be viewed as a $TM$--valued 1--form, see Subsection \ref{subsec:VarGaussGauss}, so that $\darea_M(d\eta \wedge d\eta)$ is also a 2--form on $M$. We have
\begin{equation}
\label{eqn:PEtaEta}
\dvol(p \wedge d\eta \wedge d\eta) = h \darea_M (d\eta \wedge d\eta) = 2h \det(d\eta) \darea_M.
\end{equation}

Due to the Leibniz rule and to $d(d\eta) = 0$ (see Subsections \ref{subsec:BundleVal} and \ref{subsec:VecVal}) we have
\begin{equation}
\label{eqn:DPEtaEta}
d(\dvol(p \wedge d\eta \wedge d\eta)) = \dvol(dp \wedge \eta \wedge d\eta) + \dvol(p \wedge d\eta \wedge d\eta).
\end{equation}
Similarly to \eqref{eqn:PEtaEta},
$$
\dvol(dp \wedge \eta \wedge d\eta) = - \langle \nu, \eta \rangle \darea_M(dp \wedge d\eta) = -2 \langle \nu, \eta \rangle \det(dp, d\eta) \darea_M.
$$
As $dp = \id$, equation \eqref{eqn:DetIdEta} (which holds because $\eta$ is isogauss) implies
$$
\dvol(dp \wedge \eta \wedge d\eta) = 0.
$$
Thus, integrating \eqref{eqn:DPEtaEta} and applying the Stokes formula yields
$$
\int\limits_M \dvol(p \wedge d\eta \wedge d\eta) = 0.
$$
Together with \eqref{eqn:PEtaEta} this implies the statement of the lemma.
\end{proof}

\begin{proof}[Proof of Theorem \ref{thm:GaussRig}]
Let $M$ be a closed surface with everywhere positive Gauss curvature, and let $\eta$ be an isogauss infinitesimal deformation of $M$. We may assume that the coordinate origin of $\R^3$ lies in the region bounded by $M$, which implies $h > 0$. Together with Lemma \ref{lem:DetDEtaNeg} \eqref{it:DetNeg} this yields
$$
\int\limits_M 2h \det(d\eta) \, \darea_M \le 0.
$$
On the other hand, by Lemma \ref{lem:IntZero} equation \eqref{eqn:IntZero} holds. Thus we must have $\det(d\eta) = 0$ everywhere on $M$. By Lemma \ref{lem:DetDEtaNeg} \eqref{it:DetNeg} this implies $d\eta = 0$. Hence the map $\eta \colon M \to \R^3$ is constant i.~e. is a trivial deformation. The theorem is proved.
\end{proof}

\begin{rem}
Pull the shape operator $B_t \colon TM_t \to TM_t$ back to $TM$ by identifying $TM_t$ with $TM$ through parallel translation. Then it is easy to see that
$$
B_t = (\id + t d\eta)^{-1} \circ B,
$$
which implies
$$
\dot B = - d\eta \circ B.
$$
This equation leads to alternative proofs of equations \eqref{eqn:DotK TrEta} and \eqref{eqn:DetJNuEta}:
\begin{equation*}
\begin{split}
\dot K &=  (\det B)^\bigdot = 2 \det(\dot B, B) = -2 \det(d\eta \circ B, B)\\
&= -2 \det(d\eta, \id) \cdot \det B = - \tr (d\eta) \cdot K
\end{split}
\end{equation*}
\begin{equation*}
\begin{split}
\det(JB, d\eta) &= \det(JB, - \dot B B^{-1}) = \det(J, - B^{-1} \dot B B^{-1}) \cdot \det B\\
&= \det(J, (B^{-1})^\bigdot) \cdot \det B = 0,
\end{split}
\end{equation*}
where the last equation holds because $\det(J, A) = 0$ for every self-adjoint operator $A$.
\end{rem}

\subsection{Second derivative of the volume}
\label{subsec:VolDer}
Here we give a geometric interpretation of the integral $\int_M h \det(d\eta)\, \darea_M$ that plays a crucial role in the proof of Theorem \ref{thm:GaussRig}.

\begin{lem}
\label{lem:VolP}
Let $P \subset \R^3$ be the body bounded by a closed surface $M$, and let $p \colon M \to \R^3$ be the position vector field along $M$. Then we have
\begin{equation}
\label{eqn:VolP}
\Vol(P) = \frac16 \int\limits_M \dvol(p \wedge dp \wedge dp).
\end{equation}
\end{lem}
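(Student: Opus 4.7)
My plan is to prove \eqref{eqn:VolP} by extending the integrand to a 2-form on all of $\R^3$ and applying Stokes' theorem. First I would extend the position vector field to the ambient space (so that $p \co \R^3 \to \R^3$ is simply the identity and $dp$ is the identity endomorphism of $T\R^3$). Then the expression $\omega := \tfrac16 \dvol(p \wedge dp \wedge dp)$ makes sense as a smooth real-valued 2-form on all of $\R^3$, and its restriction to $M$ coincides with the integrand on the right-hand side of \eqref{eqn:VolP}. Since $M = \partial P$, Stokes' theorem reduces the lemma to showing that $d\omega$ equals the Euclidean volume form $\dvol$ on $\R^3$.

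The next step is to compute $d\omega$ using the Leibniz rule for vector-valued forms from Section~\ref{sec:DetForms}, exactly as in the proof of Lemma~\ref{lem:IntZero}. Since $d(dp) = 0$ (the covariant derivative in $\R^3$ is flat), the differential collapses to
$$
d\omega = \tfrac16 \dvol(dp \wedge dp \wedge dp).
$$
Writing $p = \sum_i x_i e_i$ in Cartesian coordinates, so that $dp = \sum_i dx_i \otimes e_i$, the triple wedge expands as $\sum_{i,j,k} dx_i \wedge dx_j \wedge dx_k \otimes e_i \otimes e_j \otimes e_k$, and applying $\dvol$ to the tensor factor kills all terms with a repeated index. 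The surviving six terms, indexed by permutations of $(1,2,3)$, combine to $6\, dx_1 \wedge dx_2 \wedge dx_3$, so that $d\omega = dx_1 \wedge dx_2 \wedge dx_3 = \dvol$.

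Finally, Stokes' theorem gives
$$
\tfrac16 \int_M \dvol(p \wedge dp \wedge dp) \;=\; \int_{\partial P} \omega \;=\; \int_P d\omega \;=\; \int_P \dvol \;=\; \Vol(P),
$$
which is exactly \eqref{eqn:VolP}. The only real obstacle is bookkeeping with vector-valued differential forms, namely verifying the Leibniz rule and the expansion of $\dvol(dp \wedge dp \wedge dp)$ with the correct combinatorial factor; once the conventions established in Section~\ref{sec:DetForms} are in hand, the argument is essentially the three-dimensional divergence theorem applied to the radial vector field in disguise, and in particular recovers the classical identity $\Vol(P) = \tfrac13 \int_M h \, \darea_M$ via $\dvol(p \wedge dp \wedge dp) = 2h \, \darea_M$.
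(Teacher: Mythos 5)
Your proposal is correct and follows essentially the same route as the paper: the paper likewise computes $\dvol(dp \wedge dp \wedge dp) = 6$ (via Lemma \ref{lem:DvolDet} rather than the explicit coordinate expansion) and applies Stokes' theorem to pass from the integral over $P$ to the boundary integral over $M$. Your coordinate verification of the factor $6$ and the Leibniz-rule step with $d(dp)=0$ are both sound and match the conventions of Section \ref{sec:DetForms}.
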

\begin{proof}
We have
$$
\dvol(dp \wedge dp \wedge dp) = 6
$$
(cf. Lemma \ref{lem:DvolDet}). Thus
$$
\Vol(P) = \frac16 \int\limits_P \dvol(dp \wedge dp \wedge dp) = \frac16 \int\limits_M \dvol(p \wedge dp \wedge dp),
$$
where we applied the Stokes theorem in the last equality. The lemma is proved.
\end{proof}

\begin{rem}
In a more classical notation, formula \eqref{eqn:VolP} reads as
$$
\Vol(P) = \frac13 \int\limits_M h \, \darea_M
$$
and can be obtained similarly by integrating the function $\Delta(\|x\|^2) = 6$ over $P$ and applying the Stokes theorem.
\end{rem}

Let $\eta$ be an infinitesimal deformation of $M$ satisfying condition \eqref{it:IsogaussA} from Definition \ref{dfn:Isogauss}. Let $M_t$ be the surface defined in Subsection \ref{subsec:DefAndThm}, and denote by $P_t$ the body bounded by $M_t$. Denote further
$$
\Vol^\bigdot = \left. \frac{d}{dt} \right|_{t=0} \Vol(P_t), \qquad
\Vol^{\bigdot\bigdot} := \left. \frac{d^2}{dt^2} \right|_{t=0} \Vol(P_t).
$$

\begin{lem}
\label{lem:VolDer}
We have
\begin{gather}
\label{eqn:VolDot}
\Vol^\bigdot = \frac12 \int\limits_M \dvol(\eta \wedge dp \wedge dp) = \frac12 \int\limits_M \dvol(p \wedge d\eta \wedge dp),\\
\label{eqn:VolDDot}
\Vol^{\bigdot\bigdot} = \int\limits_M \dvol(\eta \wedge d\eta \wedge dp) = \int\limits_M \dvol(p \wedge d\eta \wedge d\eta).
\end{gather}
\end{lem}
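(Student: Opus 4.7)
The plan is to apply Lemma~\ref{lem:VolP} to the surface $M_t$, pull the resulting integral back to $M$ along $\phi_t$, and expand in powers of $t$. Since the position field on $M_t$ satisfies $p_t\circ\phi_t=\phi_t=p+t\eta$, and pullback commutes with $\dvol$ (which is purely algebraic in the $\R^3$-valued coefficients), we get
$$
\Vol(P_t)=\frac{1}{6}\int_M\dvol\bigl((p+t\eta)\wedge d(p+t\eta)\wedge d(p+t\eta)\bigr).
$$
A useful preliminary observation is that $\dvol(\alpha\wedge\beta\wedge\gamma)$ is symmetric in $\beta$ and $\gamma$ whenever $\alpha$ is a $0$-form and $\beta,\gamma$ are $1$-forms, because swapping costs one sign from the antisymmetry of $\dvol$ on $\R^3\otimes\R^3\otimes\R^3$ and one from the anticommutativity of the wedge of two $1$-forms. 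Using this symmetry to regroup the cubic expansion, the coefficient of $t$ becomes $\dvol(\eta\wedge dp\wedge dp)+2\dvol(p\wedge d\eta\wedge dp)$ and the coefficient of $t^2$ becomes $\dvol(p\wedge d\eta\wedge d\eta)+2\dvol(\eta\wedge d\eta\wedge dp)$.

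To reach the formulas in the statement it then suffices to establish the two partial-integration identities
$$
\int_M\dvol(\eta\wedge dp\wedge dp)=\int_M\dvol(p\wedge d\eta\wedge dp),\qquad \int_M\dvol(\eta\wedge d\eta\wedge dp)=\int_M\dvol(p\wedge d\eta\wedge d\eta).
$$
Once these are available the two coefficients collapse to $3\int_M\dvol(\eta\wedge dp\wedge dp)$ and $3\int_M\dvol(p\wedge d\eta\wedge d\eta)$ respectively, yielding $\Vol^\bigdot$ and $\Vol^{\bigdot\bigdot}$ in either of the two stated forms. Both identities come from Stokes' theorem on the closed surface $M$, in the exact template of Lemma~\ref{lem:IntZero}. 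For the first, the graded Leibniz rule together with $d^2p=0$ gives
$$
d\bigl[\dvol(p\wedge\eta\wedge dp)\bigr]=\dvol(dp\wedge\eta\wedge dp)+\dvol(p\wedge d\eta\wedge dp),
$$
and the first summand on the right equals $-\dvol(\eta\wedge dp\wedge dp)$, because the $0$-form $\eta$ moves past $dp$ for free while the corresponding swap of arguments inside $\dvol$ costs one sign. Integrating and using Stokes kills the left-hand side, yielding the first identity. The second arises in exactly the same way from $\dvol(p\wedge\eta\wedge d\eta)$.

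The main obstacle is the sign bookkeeping for wedges of vector-valued forms of mixed degrees under $\dvol$; once the symmetry rule above is firmly in place, the remaining manipulations are entirely mechanical and parallel those already carried out in Lemma~\ref{lem:IntZero}. It is worth noting that the argument uses only smoothness of $\eta$, so in particular the isogauss hypothesis is not needed for this lemma.
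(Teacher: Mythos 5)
Your proposal is correct and follows essentially the same route as the paper: pull Lemma~\ref{lem:VolP} back to $M$ via $\phi_t$, expand in $t$ using the symmetry of $\dvol(\cdot\wedge\cdot\wedge\cdot)$ in the two $1$--form slots (the paper's Lemma~\ref{lem:SymmDet}), and then equate the resulting integrals by Stokes' theorem applied to the primitives $\dvol(p\wedge\eta\wedge dp)$ and $\dvol(p\wedge\eta\wedge d\eta)$, exactly as in \eqref{eqn:DPEtaEta}. Your sign bookkeeping and the observation that the isogauss hypothesis is not needed are both accurate; the only thing left implicit is the trivial arithmetic combining the $\tfrac16$ prefactor, the factor $3$, and the factor $2$ from $\tfrac{d^2}{dt^2}t^2$.
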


\begin{proof}
Pull back the position vector field $p_t \colon M_t \to \R^3$ to $M$ by the map $\phi_t$. Retaining the same notation, we have
$$
p_t = p + t\eta.
$$
Lemma \ref{lem:VolP} implies
$$
\Vol(P_t) = \frac16 \int\limits_M \dvol(p_t \wedge dp_t \wedge dp_t).
$$
By differentiating and using Lemma \ref{lem:SymmDet} we obtain
$$
\Vol^\bigdot = \frac16 \int\limits_M \dvol(\eta \wedge dp \wedge dp) + \frac13 \int\limits_M \dvol(p \wedge d\eta \wedge dp).
$$
Integration by parts shows that the integrals on the right hand side are equal. This implies equations \eqref{eqn:VolDot}.

Equations \eqref{eqn:VolDDot} are proved in a similar way. Formula for integration by parts appeared already in \eqref{eqn:DPEtaEta}.
\end{proof}

Equations \eqref{eqn:PEtaEta} and \eqref{eqn:VolDDot} imply
$$
\Vol^{\bigdot\bigdot} = \int_M 2h \det(d\eta)\, \darea_M.
$$
The proof of Theorem \ref{thm:GaussRig} given in Subsection \ref{subsec:ProofGaussRig} consists in computing the second derivative of $\Vol$ in two different ways --- the first and the second integral in \eqref{eqn:VolDDot} --- and then showing that the first integral vanishes (Lemma \ref{lem:IntZero}) while the second one is non-positive and can vanish only if $\eta$ is trivial (Lemma \ref{lem:DetDEtaNeg}).

\subsection{Hessian of the support function}
\label{subsec:HessSupp}
A more traditional way to present the above proof is in terms of the support function.

First, we need to change the setup. Instead of considering a closed convex surface $M \subset \R^3$, we consider a smooth embedding
$$
\phi \colon \Sph^2 \to \R^3.
$$
An infinitesimal deformation
$$
\eta \colon \Sph^2 \to \R^3
$$
determines a family of embeddings
$$
\phi_t = \phi + t\eta.
$$
Finally, the position vector field $p$ coincides with the map $\phi$:
$$
p = \phi.
$$
If $M = \phi(\Sph^2)$ has everywhere positive Gauss curvature, then the Gauss map from $M$ to $\Sph^2$ is one-to-one. So we can make the following important assumption: \emph{$\phi$ is the inverse of the Gauss map}. In other words, we view $\Sph^2$ as a unit sphere centered at $0 \in \R^3$, and the map $\phi$ sends every $x \in \Sph^2$ to a point on $M$ with outward unit normal $x$.

The support function is now also being viewed as a function on $\Sph^2$, namely
\begin{equation}
\label{eqn:HOnSph}
h = \langle \iota, p \rangle,
\end{equation}
where $\iota \colon \Sph^2 \to \R^3$ is the inclusion map.

\begin{lem}
The position vector of a Gauss image parametrized surface is given by
\begin{equation}
\label{eqn:PNablaH}
p = h \iota + \nabla h,
\end{equation}
where $\nabla$ is the Levi-Civita connection on $\Sph^2$.
\end{lem}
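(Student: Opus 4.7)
The plan is to split the position vector $p(x) \in \R^3$ into its component along $\iota(x)$ and its component in $T_x\Sph^2$, and then identify these two components with $h\iota$ and $\nabla h$ respectively. The crucial structural fact used throughout is that, because $\phi$ is the inverse of the Gauss map, $\iota(x)$ is by construction the outward unit normal to $M$ at $p(x)$, so the two tangent planes $T_{p(x)}M$ and $T_x\Sph^2$ agree as linear subspaces of $\R^3$.

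First I would write
\[
p(x) \;=\; \langle p(x),\iota(x)\rangle\,\iota(x) \;+\; p^{\top}(x) \;=\; h(x)\,\iota(x) \;+\; p^{\top}(x),
\]
where $p^{\top}(x)$ denotes the orthogonal projection of $p(x)$ onto $T_x\Sph^2$, using the definition of $h$ in \eqref{eqn:HOnSph}. It remains to show that $p^{\top} = \nabla h$.

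Next I would differentiate \eqref{eqn:HOnSph} along an arbitrary $X\in T_x\Sph^2$. Since $\iota$ is the inclusion, $d\iota(X)=X$, so
\[
dh(X) \;=\; \langle X, p\rangle \;+\; \langle \iota, dp(X)\rangle.
\]
Here $dp(X)=d\phi(X)\in T_{p(x)}M = T_x\Sph^2$, which is orthogonal to $\iota(x)$, so the second term vanishes and $dh(X)=\langle X,p\rangle$ for every tangent $X$. By the definition of the gradient with respect to the round metric on $\Sph^2$, $\nabla h$ is the unique vector in $T_x\Sph^2$ with $\langle \nabla h, X\rangle = dh(X)$ for all $X\in T_x\Sph^2$. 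But $p^{\top}$ also lies in $T_x\Sph^2$ and satisfies $\langle p^{\top}, X\rangle = \langle p, X\rangle$ for every such $X$, since the normal component $h\iota$ contributes nothing. Uniqueness then forces $\nabla h = p^{\top}$, and substituting back yields the formula \eqref{eqn:PNablaH}.

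There is no genuine obstacle: the only substantive input is the identification $T_{p(x)}M = T_x\Sph^2$ coming from the Gauss image parametrization, after which the computation of $dh$ and the characterization of $\nabla h$ as the metric dual of $dh$ are entirely routine.
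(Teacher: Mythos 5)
Your proof is correct and follows essentially the same route as the paper: differentiate $h=\langle\iota,p\rangle$ along $X\in T_x\Sph^2$, use that $dp(X)\in T_{\phi(x)}M\parallel T_x\Sph^2$ is orthogonal to $\iota(x)$ to kill the second term, and conclude that $\nabla h$ is the tangential projection of $p$, hence $p=h\iota+\nabla h$. The only cosmetic difference is that you state the orthogonal decomposition of $p$ up front and then identify its tangential part, whereas the paper computes $\nabla h=p-h\iota$ directly; the substance is identical.
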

\begin{proof}
By differentiating \eqref{eqn:HOnSph} in the direction of a vector $X \in T_x \Sph^2$, we obtain
$$
X(h) = \left\langle \wnabla_X \iota, p(x) \right\rangle + \left\langle \iota(x), \wnabla_X p \right\rangle = \langle X, p(x) \rangle + \langle x, d\phi(X) \rangle.
$$
As $d\phi(X) \in T_\phi(x)M \parallel T_x\Sph^2$, the last summand vanishes and we have
$$
\langle X, \nabla h \rangle = \langle X, p(x) \rangle.
$$
It follows that $\nabla h$ is the orthogonal projection of $p(x)$ to $T_x \Sph^2$, that is
$$
\nabla h = p(x) - \langle x, p(x) \rangle x = p(x) - h(x) x,
$$
and the lemma is proved.
\end{proof}

The linear map $dp \colon T\Sph^2 \to T\R^3$ is related to the shape operator. Namely, as $p = \phi$ is the inverse of the Gauss map, $dp$ postcomposed with the parallel translation of $T_\phi(x)M$ to $T_x\Sph^2$ is the pullback of $B^{-1}$ to $\Sph^2$:
\begin{equation}
\label{eqn:DpB}
dp = \phi^* (B^{-1}).
\end{equation}

\begin{lem}
Let $\phi \colon \Sph^2 \to M$ be the inverse of the Gauss map. Then the pullback of the inverse of the shape operator expresses in terms of the support function $h \colon \Sph^2 \to \R$ as
\begin{equation}
\label{eqn:BHessH}
\phi^* (B^{-1}) = h \, \id + \Hess h,
\end{equation}
where $\Hess h \colon X \mapsto \nabla_X(\nabla h)$ is the $(1,1)$--Hessian of function $h$.
\end{lem}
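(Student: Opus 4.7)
The plan is to differentiate the formula $p = h\iota + \nabla h$ from equation \eqref{eqn:PNablaH} along a tangent vector $X \in T_x\Sph^2$, apply the Gauss formula for the embedding $\Sph^2 \hookrightarrow \R^3$, and then invoke the identification $dp = \phi^*(B^{-1})$ already recorded in \eqref{eqn:DpB}.

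Concretely, taking the ambient covariant derivative $\wnabla_X$ of both sides of \eqref{eqn:PNablaH} gives
$$
dp(X) = X(h)\,\iota(x) + h(x)\,\wnabla_X \iota + \wnabla_X (\nabla h).
$$
Three facts simplify this. First, $\wnabla_X \iota = X$, since the position vector on the unit sphere is its outward normal and its ambient covariant derivative is the identity. Second, by the Gauss formula applied to the sphere with outward normal $\iota$, one has $\wnabla_X Y = \nabla_X Y - \langle X, Y\rangle\,\iota$ for any tangent field $Y$, because the second fundamental form of $\Sph^2$ with respect to $\iota$ is $-\langle \cdot,\cdot\rangle$. Applied to $Y = \nabla h$, this yields
$$
\wnabla_X (\nabla h) = \Hess h\,(X) - \langle X, \nabla h\rangle\,\iota.
$$
Third, $X(h) = \langle X, \nabla h\rangle$. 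Substituting these into the displayed equation, the two multiples of $\iota$ cancel exactly, leaving
$$
dp(X) = h(x)\, X + \Hess h\,(X) = (h\,\id + \Hess h)(X).
$$

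Finally, combining with \eqref{eqn:DpB} gives $\phi^*(B^{-1}) = h\,\id + \Hess h$, which is the claim. There is no real obstacle; the only point requiring care is the bookkeeping of the normal component in $\wnabla_X(\nabla h)$, i.e.\ remembering that the $\R^3$-covariant derivative of an intrinsic gradient differs from its intrinsic Hessian precisely by the second-fundamental-form term $-\langle X, \nabla h\rangle\,\iota$, which is what causes the cancellation with the $X(h)\,\iota$ term.
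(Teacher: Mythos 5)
Your proof is correct and follows essentially the same route as the paper: differentiate \eqref{eqn:PNablaH}, identify $dp$ with $\phi^*(B^{-1})$ via \eqref{eqn:DpB}, and dispose of the normal components. The only cosmetic difference is that the paper projects both sides onto $T_x\Sph^2$ (legitimate since the left-hand side is already tangential), whereas you compute the normal part of $\wnabla_X(\nabla h)$ explicitly via the Gauss formula and observe that it cancels the $X(h)\,\iota$ term; the two bookkeeping devices are equivalent.
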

\begin{proof}
By differentiating \eqref{eqn:PNablaH} and using \eqref{eqn:DpB} we obtain at every point $x \in \Sph^2$ the equation
$$
\phi^* (B^{-1})(X) = X(h) x + h X + \wnabla_X (\nabla h).
$$
As the left hand side lies in $T_x\Sph^2$, we have
\begin{eqnarray*}
X(h) x + h X + \wnabla_X (\nabla h) & = & \top(X(h) x + h X + \wnabla_X (\nabla h))\\
& = & h X + \top(\wnabla_X (\nabla h)) = h X + \nabla_X (\nabla h),
\end{eqnarray*}
where $\top \colon T_x \R^3 \to T_x \Sph^2$ is the orthogonal projection. This proves the lemma.
\end{proof}

\begin{lem}
The volume of the body $P$ bounded by $M$ expresses in terms of the support function as
$$
\Vol(P) = \frac13 \int\limits_{\Sph^2} h \det(h \, \id + \Hess h) \, \darea_{\Sph^2}.
$$
\end{lem}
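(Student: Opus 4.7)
The plan is to pull the classical formula $\Vol(P) = \frac13 \int_M h \, \darea_M$ (stated in the Remark following Lemma \ref{lem:VolP}) back to $\Sph^2$ via the inverse Gauss map $\phi$, using \eqref{eqn:BHessH} to identify the Jacobian of $\phi$ with $\det(h \, \id + \Hess h)$.

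First I would record that, since $\phi \co \Sph^2 \to M$ is the inverse of the Gauss map and $dp = \phi^*(B^{-1})$ by \eqref{eqn:DpB}, the pullback of the area form of $M$ along $\phi$ is given at each $x \in \Sph^2$ by
$$
\phi^* \darea_M = \det(dp) \, \darea_{\Sph^2} = \det\bigl(\phi^*(B^{-1})\bigr) \, \darea_{\Sph^2}.
$$
Since $M$ has everywhere positive Gauss curvature, $B$ is positive definite, so $B^{-1}$ is positive definite and the determinant is positive; no absolute value is needed and orientations agree.

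Next I would apply \eqref{eqn:BHessH} to rewrite this Jacobian as $\det(h \, \id + \Hess h)$, where $h$ is now viewed as a function on $\Sph^2$ via $h \circ \phi$. Combining with the change-of-variables formula,
$$
\int_M h \, \darea_M = \int_{\Sph^2} (h \circ \phi) \cdot \det(h \, \id + \Hess h) \, \darea_{\Sph^2}.
$$
Multiplying by $\frac13$ and invoking the classical identity $\Vol(P) = \frac13 \int_M h \, \darea_M$ from the Remark after Lemma \ref{lem:VolP} gives the claimed formula.

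There is no real obstacle here: the only subtle points are (i) checking that $\phi$ is a diffeomorphism preserving orientation, which follows from the positivity of the Gauss curvature, and (ii) being careful that $h$ on the right-hand side denotes $h \circ \phi$, i.e.\ the support function regarded as a function on $\Sph^2$ through its natural identification with the space of unit outward normals. Both points are immediate from the setup in Subsection \ref{subsec:HessSupp}.
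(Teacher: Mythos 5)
Your proposal is correct and follows essentially the same route as the paper: the paper's proof likewise writes $\Vol(P) = \frac13 \int_M h\circ\phi^{-1}\,\darea_M = \frac13 \int_{\Sph^2} h \det(B^{-1})\,\darea_{\Sph^2}$ via the change of variables under the inverse Gauss map and then invokes \eqref{eqn:BHessH}. Your additional remarks on orientation and the positivity of $\det(B^{-1})$ are sound but merely make explicit what the paper leaves implicit.
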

\begin{proof}
This follows from
$$
\Vol(P) = \frac13 \int\limits_M h \circ \phi^{-1} \, \darea_M = \frac13 \int\limits_{\Sph^2} h \det(B^{-1}) \, \darea_{\Sph^2}
$$
and from equation \eqref{eqn:BHessH}.
\end{proof}

By equation \eqref{eqn:PNablaH}, the position vector field and the support function are related in a linear manner. Therefore every infinitesimal deformation can be expressed in terms of the support function by the same formula:
$$
\eta = k \iota + \nabla k,
$$
where $k \colon \Sph^2 \to \R$ is an arbitrary smooth function. It follows that formulas \eqref{eqn:VolDDot} can be rewritten as
\begin{equation}
\label{eqn:VolDDotBInv}
\Vol^{\bigdot\bigdot} = \int\limits_{\Sph^2} k \det(B^{-1}, (B^{-1})^\bigdot) \, \darea_{\Sph^2} = \int\limits_{\Sph^2} h \det((B^{-1})^\bigdot) \, \darea_{\Sph^2},
\end{equation}
or
\begin{eqnarray*}
\Vol^{\bigdot\bigdot} & = & \int\limits_{\Sph^2} k \det(k \, \id + \Hess k, h \, \id + \Hess h) \, \darea_{\Sph^2}\\
& = & \int\limits_{\Sph^2} h \det(h \, \id + \Hess h) \, \darea_{\Sph^2}.
\end{eqnarray*}
The proof of rigidity goes similarly to Subsection \ref{subsec:ProofGaussRig}. The assumption $\dot K = 0$ implies
$$
\det(B^{-1}, (B^{-1})^\bigdot) = \frac12 (\det (B^{-1}))^\bigdot = 0,
$$
so that the first integrand in \eqref{eqn:VolDDotBInv} vanishes. Non-positivity of the second integrand is proved simpler than in Lemma \ref{lem:DetDEtaNeg}: as $B^{-1}$ is self-adjoint, it suffices to prove its $\det$--orthogonality to \emph{one} operator with positive determinant, and we can just use for this the last equation (cf. Corollary \ref{cor:DetDotBNeg}).

\begin{rem}
The Hessian of the support function was used by Hilbert \cite{Hil10} to provide a basis of mixed volumes theory for bodies with smooth boundary established earlier by Minkowski for general convex bodies. The argument outlined in this section serves as a lemma in a proof of the Alexandrov-Fenchel inequality. A recent generalization of the latter is obtained in \cite{GMTZ10}.
\end{rem}

\section{Metric rigidity of smooth convex surfaces}
\subsection{The Liebmann-Blaschke theorem}
\label{subsec:BLTheorem}
Let $M \subset \R^3$ be a smooth surface, and let
$$
\xi \colon M \to \R^3,
$$
be a vector field along $M$. Similarly to Subsection \ref{subsec:DefAndThm}, consider for small $t$ the family of surfaces
\begin{gather*}
M_t = \phi_t(M),\\
\phi_t(x) = x + t\xi(x).
\end{gather*}
Let $g_t$ be the induced Riemannian metric on $M_t$. We write $g_0 = g$. Put
$$
\dot g = \lim_{t \to 0} \frac{\phi_t^*g_t - g}{t}.
$$

\begin{dfn}
An infinitesimal deformation $\xi$ of a surface $M \subset \R^3$ is called \emph{isometric}, if $\dot g = 0$.
\end{dfn}

\begin{dfn}
\label{dfn:TrivXi}
An infinitesimal deformation $\xi$ of a surface $M \subset \R^3$ is called \emph{trivial}, if $\xi$ is the restriction of an infinitesimal isometry (Killing vector field) of $\R^3$:
$$
\xi(x) = a \times x + b
$$
for some $a, b \in \R^3$, with $\times$ meaning the cross product of vectors in $\R^3$.
\end{dfn}

As the Lie derivative of $\can_{\R^3}$ along a Killing vector field vanishes, every trivial infinitesimal deformation is isometric. If a surface has no isometric infinitesimal deformations other than trivial ones, then it is called (metrically) infinitesimally rigid.

\begin{thm}[Liebmann-Blaschke-Weyl]
\label{thm:InfRigSm}
Let $M \subset \R^3$ be a convex closed surface with everywhere positive Gauss curvature. Then $M$ is infinitesimally rigid.
\end{thm}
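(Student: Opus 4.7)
The plan is to reduce Theorem 3.3 to the Gauss infinitesimal rigidity theorem (Theorem~\ref{thm:GaussRig}) by means of the classical rotation vector field construction, which is precisely the Blaschke--Weyl correspondence outlined in Subsection~\ref{subsec:IntroBlaschkeOrigin}. The starting observation is that the isometric condition $\dot g = 0$ translates into antisymmetry of the bilinear form $\langle \wnabla_X \xi, Y \rangle$ in $X, Y \in T_xM$. Because $\R^3$ carries the cross product, any linear map $T_xM \to \R^3$ with this antisymmetry property is of the form $X \mapsto \eta(x) \times X$ for a unique $\eta(x) \in \R^3$. This defines a smooth \emph{rotation vector field}
$$
\eta \colon M \to \R^3, \qquad \wnabla_X \xi = \eta(x) \times X \quad \text{for all } X \in T_xM.
$$

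Next I would verify that $\eta$ is an isogauss infinitesimal deformation in the sense of Definition~\ref{dfn:Isogauss}. For condition~(a), differentiate the defining relation $\wnabla_X \xi = \eta \times X$ along a second tangent vector $Y$ and symmetrize: the vanishing of the torsion and curvature of $\wnabla$ gives
$$
(\wnabla_Y \eta) \times X - (\wnabla_X \eta) \times Y = 0,
$$
and a short linear-algebra argument on each tangent plane forces $\wnabla_Y \eta$ to lie in $T_xM$, so $d\eta(TM) \subset TM$. For condition~(b), take the trace of the induced endomorphism $d\eta \colon TM \to TM$; combining the above identity with the vector triple product $X \times (\eta \times Y) = \langle X, Y\rangle \eta - \langle X, \eta \rangle Y$ shows $\tr(d\eta) = 0$, and by Lemma~\ref{lem:VarK} this is exactly $\dot K = 0$. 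Hence $\eta$ is isogauss.

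Applying Theorem~\ref{thm:GaussRig} to $\eta$ yields $\eta \equiv a$ for some constant $a \in \R^3$. The defining relation then becomes $\wnabla_X \xi = a \times X$ for every $X \in TM$; since $\wnabla_X(a \times p) = a \times X$ as well, the field $\xi - a \times p$ has vanishing tangential derivative along the connected surface $M$, hence equals some constant $b \in \R^3$. Therefore $\xi(x) = a \times x + b$, which is precisely the form of a trivial deformation in Definition~\ref{dfn:TrivXi}.

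The main obstacle I expect is the bookkeeping in step two: one must verify both the tangency $d\eta(TM) \subset TM$ and the trace identity $\tr(d\eta) = 0$ from the flatness and torsion-freeness of $\wnabla$, without losing sight of the normal component of $\wnabla_X \xi$ (which encodes the change in the second fundamental form). These identities are the Codazzi-type compatibility conditions for the pair $(\dot g, \dot B)$ with $\dot g = 0$; once they are organized correctly, the reduction to Gauss rigidity is immediate, and the whole argument is structurally parallel to the proof of Theorem~\ref{thm:GaussRig} given in Subsection~\ref{subsec:ProofGaussRig}.
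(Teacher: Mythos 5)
Your proposal is correct and takes essentially the paper's own route: the reduction of Theorem \ref{thm:InfRigSm} to Theorem \ref{thm:GaussRig} via the rotation field is precisely Lemma \ref{lem:ShearBend}(a) combined with Lemma \ref{lem:ConstRot}, and the Blaschke proof in Subsection \ref{subsec:BlaschkeProof} is this same reduction with the proof of Theorem \ref{thm:GaussRig} inlined. Your extraction of both the tangency $d\eta(TM)\subset TM$ and $\tr(d\eta)=0$ from the single identity $d\eta\wedge dp=0$ is a slightly more economical packaging than the paper's detour through $d\eta=J\dot B$, but the content is the same.
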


This theorem was proved by Liebmann in \cite{Lieb99} under the analyticity assumption, and later by Blaschke and Weyl for $C^3$--surfaces. A modern version of Blaschke-Weyl's proof is given in Subsection \ref{subsec:BlaschkeProof}, the background of this proof is explained in Subsection \ref{subsec:ShearBend}.

\subsection{Rotation and translation vector fields of an isometric infinitesimal deformation}
\label{subsec:RotTranslFields}
Recall from Subsection \ref{subsec:VarGaussGauss} that we denote by
$$
p \colon M \to \R^3
$$
the position vector field $p(x) = x$, and that for any vector field, say $\xi$, we view its differential (or covariant derivative) as a map
$$
d\xi \colon TM \to \R^3.
$$
In particular, $dp(X) = X$ for every $X \in TM$.

The following lemma is due to Darboux \cite{Dar96}.

\begin{lem}
\label{lem:EtaTau}
Let $\xi$ be an isometric infinitesimal deformation of a surface $M$. Then there exists a unique pair $(\eta, \tau)$ of vector fields along $M$ such that
$$
\xi = \eta \times p + \tau
$$
and moreover
\begin{gather}
d\xi = \eta \times dp \label{eqn:DXi},\\
d\tau = p \times d\eta. \label{eqn:DTau}
\end{gather}
\end{lem}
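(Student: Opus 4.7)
The plan is to translate $\dot g = 0$ into an algebraic condition on $d\xi|_{TM}$, construct $\eta$ pointwise as the unique vector in $\R^3$ satisfying $d\xi = \eta \times dp$ on $TM$, then set $\tau := \xi - \eta \times p$ and verify \eqref{eqn:DTau} by a Leibniz computation.

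First, expanding $g_t(X,Y) = \langle X + t\,d\xi(X),\, Y + t\,d\xi(Y) \rangle$ for $X, Y \in TM$ and differentiating at $t=0$ gives
$$
\dot g(X,Y) = \langle d\xi(X), Y \rangle + \langle X, d\xi(Y) \rangle,
$$
so the isometric hypothesis says exactly that the tangential part of $d\xi|_{TM}$ is skew, while the normal component is unconstrained. Choosing a positively oriented orthonormal frame $e_1, e_2, \nu$ at $x$, this means
$$
d\xi(e_1) = s\, e_2 + a\, \nu, \qquad d\xi(e_2) = -s\, e_1 + b\, \nu,
$$
for some real numbers $s, a, b$; in particular the space of admissible $d\xi$ at $x$ is $3$-dimensional.

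Next I construct $\eta$ pointwise. A direct check using $e_1 \times e_2 = \nu$, $e_2 \times \nu = e_1$, $\nu \times e_1 = e_2$ shows that $\eta(x) := b\, e_1 - a\, e_2 + s\, \nu$ satisfies $d\xi(X) = \eta(x) \times X$ for all $X \in T_xM$. Uniqueness of $\eta(x)$ is clear: if $\eta \times X = 0$ for all $X$ in the 2-plane $T_xM$, then $\eta$ is both perpendicular to $T_xM$ and collinear with every $X \in T_xM$, forcing $\eta = 0$. Smoothness of $x \mapsto \eta(x)$ is automatic from the smoothness of $d\xi$ and the linearity of the reconstruction, since $a,b,s$ are pointwise inner products with a smoothly varying frame.

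Finally, set $\tau := \xi - \eta \times p$. Then $\xi = \eta \times p + \tau$ and \eqref{eqn:DXi} hold by construction. Differentiating and using the Leibniz rule for the cross product,
$$
d\tau(X) = d\xi(X) - d\eta(X) \times p - \eta \times dp(X) = (\eta \times X) - d\eta(X) \times p - (\eta \times X) = p \times d\eta(X),
$$
which is \eqref{eqn:DTau}. Uniqueness of the pair now follows: $\eta$ is forced by \eqref{eqn:DXi}, after which $\tau$ is determined by $\xi = \eta \times p + \tau$. The only substantive step is the pointwise existence of $\eta$, and that is pure linear algebra in $\R^3$, so I expect no genuine obstacle.
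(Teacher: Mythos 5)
Your proof is correct and follows essentially the same route as the paper: both translate $\dot g = 0$ into skew-symmetry of the tangential part of $d\xi$, realize $d\xi|_{TM}$ as $\eta \times \cdot$ for a unique vector $\eta(x)$ (the paper by citing that skew-symmetric operators on $\R^3$ are cross products, you by an explicit frame computation), and then define $\tau = \xi - \eta \times p$ so that \eqref{eqn:DTau} follows from the Leibniz rule. The only difference is that you carry out explicitly the linear algebra and the final Leibniz computation that the paper leaves implicit.
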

\begin{proof}
The condition $\dot g = 0$ is easily seen to be equivalent to
$$
\langle d\xi(X), Y \rangle + \langle X, d\xi(Y) \rangle = 0
$$
for all $X, Y \in TM$. This implies that $d\xi \colon TM \to \R^3$ has a unique extension to a skew-symmetric operator $A \colon \R^3 \to \R^3$. As every skew-symmetric operator in $\R^3$ is the cross product with a fixed vector, this defines a vector field $\eta \colon M \to \R^3$ satisfying condition \eqref{eqn:DXi}. If we put $\tau = \xi - \eta \times p$, then the condition \eqref{eqn:DTau} is automatically fulfilled. The lemma is proved.
\end{proof}

Intuitively: an isometric infinitesimal deformation $\xi$ moves every tangent plane $T_xM$ as a rigid body. Thus there exists a first-order approximation of $\xi$ in a neigborhood of every point $x_0$ by a Killing vector field $x \mapsto \eta(x_0) \times x + \tau(x_0)$.

\begin{dfn}
Vector fields $\eta$ and $\tau$ are called the \emph{rotation} and \emph{translation vector fields} of an isometric infinitesimal deformation~$\xi$.
\end{dfn}

Note that the rotation field is invariant under translations of surface $M$ while the translation field is not. Namely, if $M$ is translated by a vector $a \in \R^3$ and $\xi$ kept unchanged, then the corresponding vector fields are
$$
p' = p + a, \quad \eta' = \eta, \quad \tau' = \tau + \eta \times a.
$$

\begin{lem}
\label{lem:DEtaTM}
We have $d\eta(X) \in TM$ for all $X \in TM$.
\end{lem}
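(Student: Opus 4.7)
The plan is to exploit the fact that $d\xi$ is exact, so its exterior derivative vanishes as an $\R^3$-valued $2$-form. Applying this to the identity \eqref{eqn:DXi} will produce an algebraic relation between $d\eta$ and $dp$ from which the tangency of $d\eta(X)$ follows by a direct three-dimensional computation.

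Concretely, I would start from $d\xi = \eta \times dp$ and take the exterior differential of both sides. Since $\xi$ is a (vector-valued) $0$-form, $d(d\xi) = 0$, and the Leibniz rule for the cross-product combined with $d(dp) = 0$ yields
$$
0 \;=\; d(\eta \times dp) \;=\; d\eta \wedge dp,
$$
where the wedge is taken componentwise with respect to the bilinear operation $\times$ on $\R^3$. Evaluating this $\R^3$-valued $2$-form on a pair $X, Y \in T_xM$ and using $dp(X) = X$, $dp(Y) = Y$ gives the pointwise identity
$$
d\eta(X) \times Y \;=\; d\eta(Y) \times X.
$$

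To extract the tangency conclusion, pick an orthonormal basis $(X,Y)$ of $T_xM$, oriented so that $X \times Y = \nu$, where $\nu$ is the unit normal. Taking the inner product of the above identity with $X$ and using the scalar triple product identity $\langle A \times B, C \rangle = \langle A, B \times C \rangle$, the right-hand side contributes $\langle d\eta(Y), X \times X \rangle = 0$, while the left-hand side is $\langle d\eta(X), Y \times X \rangle = -\langle d\eta(X), \nu \rangle$. Hence $\langle d\eta(X), \nu \rangle = 0$, and an analogous pairing with $Y$ yields $\langle d\eta(Y), \nu \rangle = 0$. By linearity $d\eta(Z) \in T_xM$ for every $Z \in T_xM$.

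There is no real obstacle here; the only point requiring a bit of care is the behaviour of the exterior derivative on $\R^3$-valued forms and the interaction of $d$ with the cross product, but this machinery is precisely what the paper sets up in its sections on vector-valued forms. The whole argument is essentially the observation that the closedness of the $1$-form $\eta \times dp$ forces $d\eta$ to annihilate the normal component.
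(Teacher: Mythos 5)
Your proof is correct and follows exactly the route the paper indicates: it takes the exterior differential of the identity $d\xi = \eta \times dp$ and extracts the normal component of $d\eta$ from the resulting relation $d\eta(X) \times Y = d\eta(Y) \times X$. The paper leaves these computations as a one-line hint, so your write-up is simply a fleshed-out version of the same argument.
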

\begin{proof}
This can be proved by taking the differential of \eqref{eqn:DXi} (or computing second covariant derivatives $\wnabla_X \wnabla_Y \xi$ and $\wnabla_Y \wnabla_X \xi$).
\end{proof}

\begin{lem}
\label{lem:ConstRot}
An isometric infinitesimal deformation is trivial if and only if its rotation vector field is constant.
\end{lem}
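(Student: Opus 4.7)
The plan is to read the statement off directly from Lemma~\ref{lem:EtaTau} (Darboux's lemma), using the uniqueness of the decomposition $\xi = \eta\times p + \tau$ and the relation $d\tau = p\times d\eta$. I will handle the two implications separately, and in each case the argument reduces to a one-line observation plus connectedness of $M$.

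For the implication ``trivial $\Rightarrow$ rotation field constant,'' suppose $\xi(x) = a\times x + b$ with $a,b\in\R^3$ constant. Then $d\xi(X) = a\times X = a\times dp(X)$ for every $X\in TM$, so the pair $(\eta,\tau) = (a,b)$ satisfies the conditions of Lemma~\ref{lem:EtaTau}. By the uniqueness part of that lemma, $\eta \equiv a$ is constant.

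For the converse, assume $\eta$ is constant on $M$ (which we take to be connected; otherwise apply the argument on each component). Then $d\eta = 0$, and from \eqref{eqn:DTau} we get $d\tau = p\times d\eta = 0$, so $\tau$ is also constant. Writing $\eta\equiv a$ and $\tau\equiv b$ gives $\xi = a\times p + b$, which is a Killing field of $\R^3$ restricted to $M$, i.e.\ a trivial deformation in the sense of Definition~\ref{dfn:TrivXi}.

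There is no real obstacle here: the lemma is essentially a corollary of the Darboux decomposition. The only thing to keep track of is that ``constant'' means constant on $M$, which requires $M$ connected; this is already implicit in the way the theory has been set up (isometric deformations of a connected closed surface). No new calculation is needed beyond invoking \eqref{eqn:DXi} and \eqref{eqn:DTau}.
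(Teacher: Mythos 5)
Your proof is correct and follows essentially the same route as the paper: the forward direction is read off from the uniqueness in Lemma~\ref{lem:EtaTau} (the paper states this even more tersely), and the converse uses $d\eta=0 \Rightarrow d\tau = p\times d\eta = 0$ exactly as in the paper's proof. No issues.
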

\begin{proof}
The rotation vector field of a trivial isometric infinitesimal deformation is constant by Definition \ref{dfn:TrivXi} and Lemma \ref{lem:EtaTau}.

In the opposite direction, if $\eta = \const$, then $d\eta = 0$, and by \eqref{eqn:DTau} $d\tau = 0$. Hence $\tau = \const$. By definition, if both $\eta$ and $\tau$ are constant, then $\xi$ is trivial.
\end{proof}

\subsection{Variation of the shape operator}
\label{subsec:VarShape}
Let $\nu \colon M \to \R^3$ be a local unit normal field to $M$, and let $\nu_t \colon M_t \to \R^3$ be the corresponding local unit normal field on $M_t$. Consider the shape operators $B = d\nu$ and
$$
B_t = d\nu_t \colon TM_t \to TM_t.
$$
Denote
$$
\dot B = \lim_{t \to 0} \frac{\phi_t^* B_t - B}{t}.
$$

\begin{lem}
\label{lem:BSelfAdj}
For every isometric infinitesimal deformation $\xi$, the operator $\dot B \colon TM \to TM$ is self-adjoint.
\end{lem}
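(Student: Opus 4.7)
The plan is to reduce the self-adjointness of $\dot B$ to the (trivial) symmetry of the second fundamental form together with the isometric assumption $\dot g = 0$. Recall that for each $t$ the operator $B_t$ is the $g_t$-adjoint of the second fundamental form $\II_t$ of $M_t$, i.~e.
$$
\II_t(X, Y) = g_t(B_t X, Y) \quad \text{for all } X, Y \in TM_t,
$$
and $\II_t$ is symmetric. Pulling back via $\phi_t$, the same identity holds for $\phi_t^* \II_t$, $\phi_t^* g_t$, and $\phi_t^* B_t$ viewed as tensors on $TM$, and $\phi_t^* \II_t$ remains symmetric in $X, Y$ for every $t$.

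First, I would differentiate the identity $(\phi_t^*\II_t)(X, Y) = (\phi_t^* g_t)(\phi_t^* B_t X, Y)$ at $t=0$. Applying the Leibniz rule and using that $\xi$ is isometric, hence $\dot g = 0$, this yields
$$
\dot\II(X, Y) = g(\dot B X, Y),
$$
so $\dot B$ represents the symmetric bilinear form $\dot\II$ with respect to the metric $g$.

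Second, since $\phi_t^* \II_t$ is symmetric for every $t$, its derivative $\dot \II$ at $t=0$ is symmetric as well. Combining this with the preceding formula gives
$$
g(\dot B X, Y) = \dot\II(X, Y) = \dot\II(Y, X) = g(\dot B Y, X) = g(X, \dot B Y),
$$
which is exactly the self-adjointness of $\dot B \colon TM \to TM$ with respect to $g$.

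There is no real obstacle in the argument; the only point that needs a bit of care is the bookkeeping of the pullback along $\phi_t$, so that $\II_t$, $g_t$ and $B_t$ are all viewed as tensors on the fixed manifold $M$ before differentiating in $t$. Once this is done, the symmetry of the second fundamental form together with $\dot g = 0$ gives the statement for free.
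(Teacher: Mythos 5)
Your proof is correct and follows essentially the same route as the paper: the paper differentiates the identity $(\phi_t^*g_t)((\phi_t^*B_t)X,Y)=(\phi_t^*g_t)(X,(\phi_t^*B_t)Y)$ at $t=0$ and uses $\dot g=0$, which is exactly your argument phrased through the symmetry of the second fundamental form $\II_t(X,Y)=g_t(B_tX,Y)$. The bookkeeping of pullbacks that you flag is handled identically in the paper.
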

\begin{proof}
The self-adjointness of $B_t$ with respect to $g_t$ implies
$$
(\phi_t^*g_t)((\phi_t^* B_t)(X), Y) = (\phi_t^*g_t)(X, (\phi_t^* B_t)(Y))
$$
for all $X, Y \in TM$. By differentiating with respect to $t$, we obtain
$$
\dot g(B(X), Y) + g(\dot B(X), Y) = \dot g(X, B(Y)) + g(X, \dot B(Y)).
$$
As $\xi$ is isometric, we have $\dot g = 0$. This implies the lemma.
\end{proof}

The next lemma relates operators $d\eta$ and $d\tau$ with $\dot B$.

\begin{lem}
\label{lem:DEtaBDot}
Let $\xi$ be an isometric infinitesimal deformation of a surface $M \subset \R^3$, and let $\eta$ and $\tau$ be its rotation and translation vector fields. Then we have
\begin{equation}
\label{eqn:DEtaBDot}
d\eta = J \dot B,
\end{equation}
where $J \colon TM \to TM$ is rotation by $\pi/2$. We also have
\begin{equation}
\label{eqn:DTauBDot}
\top \circ d\tau = - h \dot B,
\end{equation}
where $\top \colon \R^3 \to TM$ is an orthogonal projection, and $h \colon M \to \R$ is the support function, see Definition \ref{dfn:SuppFunc}.
\end{lem}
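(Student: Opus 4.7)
The plan is to compute $\dot B$ by differentiating the relation between $B_t$ and the normal field $\nu_t$, and then match the two sides of \eqref{eqn:DEtaBDot} and \eqref{eqn:DTauBDot} using the identities from Lemma \ref{lem:EtaTau}. The main structural tool is the pullback identity
\[
d(\nu_t \circ \phi_t) = d\phi_t \circ (\phi_t^* B_t) \co TM \to \R^3,
\]
which comes from $B_t = d\nu_t$ together with the chain rule. Differentiating at $t=0$ and writing $\dot\nu := \frac{d}{dt}\big|_{t=0}(\nu_t\circ\phi_t)$ gives $d\dot\nu = d\xi \circ B + \dot B$, viewed as maps $TM\to T\R^3|_M$.

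First I would compute $\dot\nu$. Since $\|\nu_t\|=1$, the vector $\dot\nu$ is tangent to $M$. Differentiating the orthogonality $\langle \nu_t\circ\phi_t, d\phi_t(X)\rangle = 0$ and using \eqref{eqn:DXi} yields
\[
\langle \dot\nu, X\rangle = -\langle \nu, \eta\times X\rangle = -\langle \nu\times\eta, X\rangle,
\]
so $\dot\nu = -\nu\times\eta$ (the component of $\eta$ along $\nu$ is killed by $\nu\times$).

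Next I would plug this into $\dot B = d\dot\nu - d\xi\circ B$. Using $d\xi(BX) = \eta \times BX$ and the Leibniz rule
\[
d\dot\nu(X) = -\bigl(B(X)\times\eta + \nu\times d\eta(X)\bigr) = \eta\times B(X) - \nu\times d\eta(X),
\]
the two $\eta\times B(X)$ terms cancel and we obtain $\dot B(X) = -\nu\times d\eta(X)$. Since $d\eta(X)\in TM$ by Lemma \ref{lem:DEtaTM}, the operator $\nu\times$ acts on it as $J$, so $\dot B = -J\circ d\eta$. Applying $-J$ to both sides and using $J^2=-\id$ gives $d\eta = J\dot B$, which is \eqref{eqn:DEtaBDot}.

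For \eqref{eqn:DTauBDot} I would start from \eqref{eqn:DTau}: $d\tau(X) = p\times d\eta(X)$. Decompose $p = p^\top + h\nu$ with $p^\top\in TM$. Since $d\eta(X)\in TM$, the term $p^\top\times d\eta(X)$ is parallel to $\nu$ and hence killed by $\top$, while $h\nu\times d\eta(X) = hJ\,d\eta(X)$ is already tangent. Therefore $\top\circ d\tau = hJ\circ d\eta = hJ^2\dot B = -h\dot B$ by part (a) and $J^2=-\id$. There is no real obstacle; the only subtlety to keep in mind is that $\dot B$, $\dot\nu$ and the pullback identity above must all be interpreted along $\phi_t$ so that the chain rule produces the correct $d\xi\circ B$ correction.
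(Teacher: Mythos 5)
Your proposal is correct and follows essentially the same route as the paper: it computes $\dot\nu$ from the unit-length and orthogonality relations (your $-\nu\times\eta$ equals the paper's $\eta\times\nu$), differentiates $\nu_t\circ\phi_t$ in two ways to cancel the $\eta\times B$ terms and obtain $\dot B=-J\circ d\eta$, and then derives \eqref{eqn:DTauBDot} from $d\tau=p\times d\eta$ by splitting $p$ into its normal and tangential parts. All signs and the tangency arguments check out.
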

\begin{proof}
Denote
$$
\dot \nu := \lim_{t \to 0} \frac{\nu_t \circ \phi_t - \nu}{t}.
$$
By differentiating equations $\|\nu_t\|^2 = 1$ and $\langle \nu_t, d\phi_t(X) \rangle = 0$ and using \eqref{eqn:DXi}, we obtain
$$
\dot \nu = \eta \times \nu.
$$
(This is intuitively clear: if a tangent plane to $M$ is rotated by $\eta$, then the normal is also rotated by $\eta$.) Thus we have
\begin{equation}
\label{eqn:DNu1}
d \dot\nu = d\eta \times \nu + \eta \times d\nu = -J \circ d\eta + \eta \times B,
\end{equation}
where we used that $d\eta(TM) \subset TM$, see Lemma \ref{lem:DEtaTM}.

On the other hand,
$$
d(\nu_t \circ \phi_t) = d\nu_t \circ d\phi_t = d\phi_t \circ \phi_t^* B_t.
$$
Differentiating with respect to $t$ yields
\begin{equation}
\label{eqn:DNu2}
d \dot\nu = d\xi \circ B + dp \circ \dot B = \eta \times B + \dot B,
\end{equation}
compare \cite[Equation (0)]{Sou99}. Equating the right hand sides of \eqref{eqn:DNu1} and \eqref{eqn:DNu2} yields equation \eqref{eqn:DEtaBDot}.

Now let us prove Equation \eqref{eqn:DTauBDot}. We have
$$
d\tau = p \times d\eta = (h\nu + \top p) \times d\eta = h\nu \times d\eta + c\nu
$$
for some $c \in \R$, because $d\eta(X) \in TM$. Hence, due to \eqref{eqn:DEtaBDot}
$$
\top \circ d\tau = h\nu \times d\eta = hJ \circ d\eta = h J^2 \dot B = - h \dot B.
$$
The lemma is proved.
\end{proof}

\subsection{Blaschke's proof of Theorem \ref{thm:InfRigSm}}
\label{subsec:BlaschkeProof}
Here we give a coordinate-free version of the proof from \cite{Bla21} (see also \cite[Chapter 12]{SpiV}). The main novelty is the interpretation of the operator $d\eta \colon TM \to TM$ through the variation of the shape operator, see Lemma \ref{lem:DEtaBDot}.

\begin{proof}[Blaschke's proof of Theorem \ref{thm:InfRigSm}]
Let $M \subset \R^3$ be a closed surface with positive Gauss curvature, and let $\xi$ be an isometric infinitesimal deformation of $M$. Let $\eta$ be the rotation vector field of $\xi$, see Subsection \ref{subsec:RotTranslFields}. By Lemma \ref{lem:DEtaBDot}, we have
$$
d\eta = J \dot B \colon TM \to TM.
$$
This implies two remarkable properties of the operator $d\eta$: first,
\begin{equation}
\label{eqn:TrEta}
\tr(d\eta) = 0,
\end{equation}
as $\tr(JA) = 0$ for every self-adjoint operator $A$, and $\dot B$ is self-adjoint by Lemma \ref{lem:BSelfAdj}, and second
\begin{equation}
\label{eqn:DetEta}
\det(d\eta) \le 0, \quad \mbox{ while } \det(d\eta) = 0 \mbox{ only if } d\eta = 0,
\end{equation}
which holds by Corollary \ref{cor:DetDotBNeg} as $\det B = K > 0$ by assumption on $M$ and $(\det B)^\bigdot = \dot K = 0$ because $\xi$ is isometric.

Recall that $p \colon M \to \R^3$ denotes the position vector field so that we have $dp = \id \colon TM \to TM$. Consider the differential 1--form
\begin{equation}
\label{eqn:BlaschkeForm}
\dvol(p \wedge \eta \wedge d\eta) \in \Omega^1(M),
\end{equation}
see Subsections \ref{subsec:MixDetVVal} --- \ref{subsec:VecVal} or a brief explanation in the proof of Lemma \ref{lem:IntZero}. We have
$$
d(\dvol(p \wedge \eta \wedge d\eta)) = \dvol(dp \wedge \eta \wedge d\eta) + \dvol(p \wedge d\eta \wedge d\eta).
$$
The first summand on the right hand side vanishes due to
$$
dp \wedge d\eta = \det(\id, d\eta) \darea_M = \frac12 \tr(d\eta) \darea_M = 0,
$$
where we used \eqref{eqn:TrEta} in the last step. The integrand in the second summand equals $2h \det(d\eta)$, where $h$ is the support function of the surface $M$. Thus Stokes' theorem implies
\begin{equation}
\label{eqn:IntBlaschke0}
\int\limits_{M} 2h \det(d\eta)\, \darea_M = 0.
\end{equation}
Without loss of generality we may assume that the origin of $\R^3$ lies in the region bounded by $M$, so that $h>0$. Together with \eqref{eqn:DetEta}, this implies $d\eta=0$. Thus $\eta$ is constant, and by Lemma \ref{lem:ConstRot} the deformation $\xi$ is trivial.
\end{proof}

Subsection \ref{subsec:ShearBend} tells about the origin of this proof. We'll now give another proof that looks more complicated at first but allows a variational interpretation similar to that given in Subsection \ref{subsec:VolDer} to the proof of Theorem~\ref{thm:GaussRig}.

\subsection{Another proof of Theorem \ref{thm:InfRigSm}}
\label{subsec:OtherProof}
Let $\nu$ be the field of outward unit normals to $M$. Introduce a new vector field along $M$
$$
\psi = \frac{\nu}{h}.
$$
Consider the differential 1--form
$$
\dvol(\psi \wedge \tau \wedge d\tau) \in \Omega^1(M)
$$
and its differential
$$
d(\dvol(\psi \wedge \tau \wedge d\tau)) = \dvol(d\psi \wedge \tau \wedge d\tau) + \dvol(\psi \wedge d\tau \wedge d\tau).
$$
By Stokes' theorem,
\begin{equation}
\label{eqn:StokesMine}
\int\limits_M \dvol(\tau \wedge d\psi \wedge d\tau) = \int\limits_M \dvol(\psi \wedge d\tau \wedge d\tau).
\end{equation}

\begin{lem}
\label{lem:Tau1}
We have
$$
\dvol(\tau \wedge d\psi \wedge d\tau) = 0.
$$
\end{lem}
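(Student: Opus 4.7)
The plan is to expand $\dvol(\tau \wedge d\psi \wedge d\tau)$ using the cross-product identity $\dvol(a,b,c\times d) = \langle a,c\rangle\langle b,d\rangle - \langle a,d\rangle\langle b,c\rangle$ (which combines $\dvol(a,b,c) = \langle a\times b, c\rangle$ with Lagrange's formula) together with the relation $d\tau = p\times d\eta$ of \eqref{eqn:DTau}. For $X, Y \in T_xM$, applying the identity to $d\tau(Y) = p\times d\eta(Y)$ gives
\begin{equation*}
\dvol(\tau, d\psi(X), d\tau(Y)) = \langle \tau, p\rangle\langle d\psi(X), d\eta(Y)\rangle - \langle \tau, d\eta(Y)\rangle\langle d\psi(X), p\rangle.
\end{equation*}

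The second term vanishes identically: since $\langle \psi, p\rangle = \langle \nu, p\rangle/h = 1$, differentiating in the direction $X\in T_xM$ yields $\langle d\psi(X), p\rangle = -\langle \psi, X\rangle = 0$, because $\psi$ is a multiple of $\nu$ while $X$ is tangent. For the first term, Lemma \ref{lem:DEtaTM} gives $d\eta(Y) \in T_xM$, so in $d\psi(X) = B(X)/h - X(h)\nu/h^2$ only the tangential part contributes, and $\langle d\psi(X), d\eta(Y)\rangle = \langle B(X), d\eta(Y)\rangle/h$. Substituting $d\eta = J\dot B$ from \eqref{eqn:DEtaBDot} and antisymmetrizing in $(X,Y)$ delivers
\begin{equation*}
\dvol(\tau\wedge d\psi\wedge d\tau)(X,Y) = \frac{\langle\tau, p\rangle}{h}\bigl[\langle BX, J\dot B Y\rangle - \langle BY, J\dot B X\rangle\bigr].
\end{equation*}

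To finish, I would identify the bracket with $-\dot K\,\darea_M(X,Y)$. Using $\langle Z, JW\rangle = -\darea_M(Z,W)$, the bracket rewrites as $-[\darea_M(BX, \dot BY) - \darea_M(BY, \dot BX)]$; the standard polarization on a $2$--plane converts this to $-2\det(B,\dot B)\,\darea_M(X,Y)$, and the mixed-determinant formula used in the Remark after Subsection \ref{subsec:ProofGaussRig} gives $2\det(B, \dot B) = (\det B)^\bigdot = \dot K$. Since $\xi$ is isometric, $\dot g = 0$ forces $\dot K = 0$ (the Gauss curvature is intrinsic), and the entire $2$--form vanishes.

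The main obstacle in executing this plan is bookkeeping the tangent/normal splittings of $d\psi$ and $d\tau$ and the signs arising from $J$ and from the cross product; once the Lagrange-type identity is invoked the computation collapses to the intrinsic consequence $\dot K = 0$ of the isometry condition, exactly in parallel with how the $\dvol(dp \wedge \eta \wedge d\eta) = 0$ step in the proof of Lemma \ref{lem:IntZero} collapsed to the isogauss condition.
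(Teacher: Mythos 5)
Your proposal is correct and follows essentially the same route as the paper's proof: both hinge on $\langle d\psi, p\rangle = 0$ (from differentiating $\langle\psi,p\rangle=1$), the substitutions $d\tau = p\times d\eta$ and $d\eta = J\dot B$, and the final collapse to $2\det(B,\dot B) = (\det B)^\bigdot = \dot K = 0$. The only cosmetic difference is that you expand via the Lagrange identity directly, whereas the paper first isolates the factor $\langle\tau,p\rangle/\|p\|$ and then computes $\darea_{p^\perp}(d\psi\wedge d\tau)$; the computations are identical in substance.
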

\begin{proof}
First of all, let us show that
$$
\langle d\tau, p \rangle = 0 = \langle d\psi, p \rangle.
$$
The first equality follows from $d\tau = p \times d\eta$. For the second one, compute
$$
\langle d\psi(X), p \rangle = X \langle \psi, p \rangle - \langle \psi, X \rangle.
$$
The right hand side vanishes as $\psi \perp X$ and $\langle \psi, p \rangle = 1$. Thus the left hand side vanishes too, and the claim is proved.

It follows that
\begin{equation}
\label{eqn:DVolTauPsiPsi}
\dvol(\tau \wedge d\psi \wedge d\tau) = \frac{\langle \tau, p \rangle}{\|p\|} \, \darea_{p^\perp}(d\psi \wedge d\tau),
\end{equation}
where $\darea_{p^\perp}$ is the area form in the orthogonal complement to vector~$p$.

Substitute $d\tau = p \times d\eta$. A simple computation shows that
$$
\darea_{p^\perp}(\psi \wedge (p \times d\eta))(X,Y) = \|p\| \left( \langle \psi(X), d\eta(Y) \rangle - \langle \psi(Y), d\eta(X) \rangle \right).
$$
By using \eqref{eqn:DEtaBDot}, we compute
\begin{equation*}
\begin{split}
\langle d\psi(X), d\eta(Y) \rangle & = \langle h^{-1} B(X) + X(h^{-1}) \nu, J \dot B(Y) \rangle\\
& = h^{-1} \langle B(X), J \dot B(Y) \rangle = - h^{-1} \darea_M(B(X), \dot B(Y)).
\end{split}
\end{equation*}
It follows that
\begin{equation}
\label{eqn:DPsiDTau}
\darea_{p^\perp}(d\psi \wedge d\tau) = -\frac{\|p\|}{h} \darea_M(B, \dot B) = 0,
\end{equation}
because $2\det(B, \dot B) = (\det B)^\bigdot = \dot K = 0$ for an isometric infinitesimal deformation. Substitution in \eqref{eqn:DVolTauPsiPsi} proves the lemma.
\end{proof}

\begin{lem}
\label{lem:Tau2}
We have
$$
\dvol(h^{-1}\nu \wedge d\tau \wedge d\tau) = 2h \det \dot B \, \darea_M.
$$
\end{lem}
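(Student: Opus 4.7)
The plan is to reduce the identity to a point-wise algebraic calculation. Using \eqref{eqn:DTau}, I would substitute $d\tau = p \times d\eta$ and evaluate the $2$-form on a pair $X, Y \in T_xM$; after collecting the antisymmetrization factor of $2$ and pulling out the scalar $h^{-1}$ this gives
\[
\dvol(h^{-1}\nu \wedge d\tau \wedge d\tau)(X,Y) = 2h^{-1}\,\dvol\bigl(\nu,\, p \times d\eta(X),\, p \times d\eta(Y)\bigr).
\]

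The key step is the vector identity $(p\times v)\times (p\times w) = \det(p,v,w)\,p$, which follows from the BAC-CAB formula together with $\langle p\times v,\, p\rangle = 0$. Combined with $\dvol(a,b,c) = \langle a,\, b\times c\rangle$ and $\langle \nu, p\rangle = h$, it rewrites the scalar triple product above as $h\,\det\bigl(p, d\eta(X), d\eta(Y)\bigr)$. Now Lemma \ref{lem:DEtaTM} ensures that $d\eta(X), d\eta(Y) \in TM$, so decomposing $p = h\nu + \top p$ kills the tangential part of $p$ inside this determinant: what remains is $h\,\dvol(\nu, d\eta(X), d\eta(Y)) = h\,\darea_M(d\eta(X), d\eta(Y)) = h\det(d\eta)\,\darea_M(X,Y)$, using $\darea_M(AX, AY) = \det(A)\,\darea_M(X,Y)$ for $A \in \End(TM)$.

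Chaining these reductions yields $\dvol(h^{-1}\nu \wedge d\tau \wedge d\tau) = 2h\det(d\eta)\,\darea_M$. To conclude, I would invoke Lemma \ref{lem:DEtaBDot} to write $d\eta = J\dot B$; since $J$ is a rotation of $TM$ by $\pi/2$, one has $\det J = 1$, so $\det(d\eta) = \det \dot B$ and the claimed formula drops out.

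There is no genuine obstacle here; the lemma is essentially a bookkeeping exercise once the algebraic identity $(p\times v)\times(p\times w) = \det(p,v,w)\,p$ is isolated. The only things that need care are the conventions for wedge products of vector-valued forms (in particular the antisymmetrization factor of $2$) and keeping track of the two factors of $h$ that appear in the calculation: one from $\langle \nu, p\rangle$ and the other from the normal component of $p$ surviving inside the determinant after Lemma \ref{lem:DEtaTM} is applied.
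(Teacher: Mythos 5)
Your argument is correct, but it takes a different route from the paper's. The paper's proof is a two-line affair: since wedging with $\nu$ only sees the tangential components of $d\tau$, one has $\dvol(h^{-1}\nu \wedge d\tau \wedge d\tau) = h^{-1}\darea_M(\top\circ d\tau \wedge \top\circ d\tau) = 2h^{-1}\det(\top\circ d\tau)$, and then equation \eqref{eqn:DTauBDot}, i.e. $\top\circ d\tau = -h\dot B$, immediately gives $2h^{-1}\det(-h\dot B) = 2h\det\dot B$. You instead bypass \eqref{eqn:DTauBDot} entirely: you substitute $d\tau = p\times d\eta$, apply the identity $(p\times v)\times(p\times w) = \det(p,v,w)\,p$, and only at the very end convert $\det(d\eta)$ into $\det\dot B$ via the first half of Lemma \ref{lem:DEtaBDot} and $\det J = 1$. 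In effect you re-derive the content of \eqref{eqn:DTauBDot} (in determinant form) inline, using the same decomposition $p = h\nu + \top p$ and the same input $d\eta(TM)\subset TM$ that the paper uses when it proves \eqref{eqn:DTauBDot}. Your version is self-contained and makes the two factors of $h$ visible at the source, which is pedagogically pleasant; the paper's version is shorter because it amortizes that computation into Lemma \ref{lem:DEtaBDot}, whose second identity you never use. Your bookkeeping of the antisymmetrization factor of $2$ and of the powers of $h$ ($h^{-1}\cdot h\cdot h = h$) is accurate.
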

\begin{proof}
Recall that $\top \colon \R^3 \to TM$ denotes the orthogonal projection (thus $\top$ sends a vector in $T\R^3|_M$ to its tangential component). We have
\begin{equation*}
\begin{split}
\dvol(h^{-1}\nu \wedge d\tau \wedge d\tau) &= h^{-1} \darea_M(\top \circ d\tau, \top \circ d\tau)\\
&= 2 h^{-1} \det(\top \circ d\tau) = 2 h^{-1} \det(- h \dot B) = 2 h \det \dot B,
\end{split}
\end{equation*}
where we used equation \eqref{eqn:DTauBDot}. The lemma is proved.
\end{proof}

By substituting the results of Lemmas \ref{lem:Tau1} and \ref{lem:Tau2} in \eqref{eqn:StokesMine}, we obtain
\begin{equation}
\label{eqn:IntHDotB}
\int\limits_M 2 h \det \dot B \, \darea_M = 0.
\end{equation}
The end of the proof is the same as in Subsection \ref{subsec:BlaschkeProof}: we may assume $h > 0$, at the same time by Corollary \ref{cor:DetDotBNeg} we have $\det \dot B \le 0$; thus we have $\det \dot B = 0$ which again by Corollary \ref{cor:DetDotBNeg} implies $\dot B = 0$. As $d\eta = J \dot B$, it follows that the rotation vector field is constant, thus by Lemma \ref{lem:ConstRot} the deformation $\xi$ is trivial.

\subsection{Hessian of the squared distance function}
\label{subsec:HessSquaredDist}
Here we rewrite the proof from the previous subsection in other terms. For a surface $M \subset \R^3$, consider the function
\begin{gather*}
f \colon M \to \R,\\
f(x) = \frac{\|x\|^2}2.
\end{gather*}

\begin{lem}
\label{lem:NablaFEucl}
We have
$$
\nabla f = p - h\nu,
$$
where $p \colon M \to \R^3$ is the position vector field, and $h \colon M \to \R$ is the support function relative to the unit normal field $\nu$.
\end{lem}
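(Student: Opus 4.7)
The plan is to compute $\nabla f$ by first extending $f$ to a function on the ambient $\R^3$, computing its Euclidean gradient, and then projecting orthogonally onto $TM$. Since $f(x) = \|x\|^2/2$ extends naturally to $\R^3$ with ambient gradient equal to the position vector $p$, the intrinsic gradient on $M$ will be the tangential component of $p$.

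More explicitly, for any tangent vector field $X$ on $M$, I would compute
$$
X(f) = X\!\left( \tfrac{1}{2}\langle p, p \rangle \right) = \langle \wnabla_X p, p \rangle = \langle X, p \rangle,
$$
using $\wnabla_X p = dp(X) = X$ in the ambient trivialization. By definition of $\nabla f$, this means $\langle X, \nabla f \rangle = \langle X, p \rangle$ for all $X \in TM$, so $\nabla f$ is the orthogonal projection of $p$ onto $TM$.

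To identify this projection, I would decompose $p$ along $\nu$ and its orthogonal complement: the normal component of $p$ is $\langle \nu, p\rangle \nu = h \nu$ by the definition of the support function (see Definition~\ref{dfn:SuppFunc}), and the tangential component is therefore $p - h\nu$. A quick check that $\langle p - h\nu, \nu\rangle = \langle p, \nu\rangle - h = 0$ confirms tangency. This yields $\nabla f = p - h\nu$, as claimed.

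There is no real obstacle here; the computation is parallel to that of Lemma for $\nabla h$ on $\Sph^2$ in Subsection~\ref{subsec:HessSupp} (see equation~\eqref{eqn:PNablaH}), only with the roles of $p$ and $\iota$ swapped. The only care needed is distinguishing the ambient connection $\wnabla$ on $\R^3$ from the Levi--Civita connection $\nabla$ on $M$ when one later wishes to iterate this construction to get the Hessian.
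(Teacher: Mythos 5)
Your proposal is correct and follows essentially the same route as the paper: the paper's proof simply notes that the ambient gradient satisfies $\wnabla f = p$ and then takes the tangential projection $\nabla f = \top p = p - \langle \nu, p\rangle\nu = p - h\nu$. Your version just spells out the intermediate computation $X(f) = \langle X, p\rangle$ that justifies the first step.
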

\begin{proof}
Clearly, $\wnabla f = p$. Thus we have
$$
\nabla f = \top p = p - \langle \nu, p \rangle \nu = p - h\nu.
$$
\end{proof}

\begin{lem}
We have
\begin{equation}
\label{eqn:HessFEucl}
\Hess f = \id - hB,
\end{equation}
where $\Hess f \colon TM \to TM$, $X \mapsto \nabla_X \nabla f$ is the $(1,1)$--Hessian of function $f$.
\end{lem}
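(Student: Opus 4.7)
The plan is to start from the expression $\nabla f = p - h\nu$ established in Lemma \ref{lem:NablaFEucl} and differentiate it using the Gauss formula that relates the Levi-Civita connection $\nabla$ on $M$ to the ambient flat connection $\wnabla$ on $\R^3$. Since $\nabla f$ is a tangent vector field on $M$, for any $X \in TM$ we have
$$
\Hess f (X) = \nabla_X \nabla f = \top\bigl(\wnabla_X(p - h\nu)\bigr),
$$
where $\top \colon T\R^3|_M \to TM$ is the orthogonal projection, exactly as in the proof of Lemma \ref{lem:NablaFEucl}.

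Next I would expand the right-hand side using the Leibniz rule, the identity $\wnabla_X p = dp(X) = X$, and the definition $B(X) = \wnabla_X \nu$. This gives
$$
\wnabla_X(p - h\nu) = X - X(h)\nu - h B(X).
$$
The middle term $X(h)\nu$ is normal to $M$, so it is killed by $\top$. The first term $X$ is already tangent, and $B(X) \in TM$ by definition of the shape operator, so $\top$ acts as the identity on these. Combining,
$$
\Hess f(X) = X - hB(X) = (\id - hB)(X),
$$
which is the claimed formula \eqref{eqn:HessFEucl}.

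I do not expect a real obstacle here: the statement is essentially a direct consequence of Lemma \ref{lem:NablaFEucl} together with the two basic identities $\wnabla p = \id$ and $\wnabla \nu = B$, plus the Gauss formula realised as the orthogonal projection $\top$. The only point requiring a sentence of care is distinguishing $\nabla$ from $\wnabla$ when differentiating the tangential vector field $\nabla f$, and noting that the normal contribution $X(h)\nu$ drops out under tangential projection, so that no second fundamental form terms survive beyond the $-hB$ already visible in $\nabla f = p - h\nu$.
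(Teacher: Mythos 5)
Your proposal is correct and follows exactly the same route as the paper: differentiate $\nabla f = p - h\nu$ from Lemma \ref{lem:NablaFEucl} with $\top \circ \wnabla_X$, apply the Leibniz rule together with $\wnabla_X p = X$ and $\wnabla_X \nu = B(X)$, and observe that the normal term $X(h)\nu$ is annihilated by the projection. Nothing is missing.
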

\begin{proof}
By using Lemma \ref{lem:NablaFEucl}, we obtain
\begin{equation*}
\begin{split}\nabla_X \nabla f &= \top \left( \wnabla_X \nabla f \right) = \top \left( \wnabla_X(p - h\nu)\right)\\
&= \top (X - X(h)\nu - hB(X)) = X - hB(X),
\end{split}
\end{equation*}
which proves the lemma.
\end{proof}

Let $\xi$ be an infinitesimal deformation of $M$, and let $M_t = \phi_t(M)$ be the geodesic flow of $M$ along $\xi$, see Subsection \ref{subsec:BLTheorem}. Put $f_t \colon M_t \to \R$, $f_t(x) = \|x\|^2/2$ and
$$
\dot f = \lim_{t \to 0} \frac{f_t \circ \phi_t - f}{t}.
$$

\begin{lem}
\label{lem:HessDotF}
If $\xi$ is an isometric infinitesimal deformation, then we have
$$
\Hess \dot f = - (\dot h B + h \dot B),
$$
where $\dot B$ is defined as in Subsection \ref{subsec:VarShape}.
\end{lem}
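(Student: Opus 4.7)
The plan is to apply the previous lemma fibrewise to each surface $M_t$ and then differentiate in $t$. Concretely, for each small $t$ we have by \eqref{eqn:HessFEucl} (applied on $M_t$)
\[
\Hess^{(t)} f_t = \id_{TM_t} - h_t B_t,
\]
where $\Hess^{(t)}$ is the $(1,1)$--Hessian computed with the Levi-Civita connection of the induced metric $g_t$ on $M_t$, and $h_t$, $B_t$ are the support function and the shape operator of $M_t$. Pulling this identity back to $M$ via $\phi_t$ and using that the Hessian is natural under pullback, we obtain
\[
\Hess^{(\phi_t^* g_t)}(f_t \circ \phi_t) = \id_{TM} - (h_t \circ \phi_t)\,(\phi_t^* B_t),
\]
where now $\Hess^{(\phi_t^* g_t)}$ is computed with the Levi-Civita connection of the pulled-back metric $\phi_t^* g_t$ on $M$.

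The next step is to differentiate this equation at $t=0$. On the right hand side, setting $\dot h := \frac{d}{dt}\big|_{t=0}(h_t\circ\phi_t)$ and recalling the definition of $\dot B$ from Subsection \ref{subsec:VarShape}, we get immediately
\[
\left.\frac{d}{dt}\right|_{t=0}\!\bigl[\id - (h_t\circ\phi_t)(\phi_t^*B_t)\bigr] = -(\dot h\, B + h\, \dot B).
\]
For the left hand side I want to argue that
\[
\left.\frac{d}{dt}\right|_{t=0}\Hess^{(\phi_t^*g_t)}(f_t\circ\phi_t) = \Hess \dot f.
\]
The key point here is the hypothesis that $\xi$ is isometric, i.e.\ $\dot g = 0$. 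Because the Levi-Civita connection is a smooth function of the metric and its first derivatives, the vanishing of $\dot g$ implies $\phi_t^* g_t = g + O(t^2)$ and hence the first variation of the associated connection vanishes as well: $\dot\nabla = 0$. Since the Hessian of a function is $\nabla d$ applied to that function, the variation of the Hessian operator contributes nothing at $t=0$, and only the variation of the function $f_t\circ\phi_t$ survives, yielding $\Hess\dot f$.

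Equating the two derivatives gives the claim. The main obstacle is the justification that $\dot g = 0$ suffices to kill the first variation of $\Hess^{(\phi_t^*g_t)}$ on the left hand side; once this is in place, the statement follows simply by differentiating the identity \eqref{eqn:HessFEucl} applied to the one-parameter family. This is essentially the same ``differentiate a pointwise identity in a flat family of connections'' trick used in Subsection \ref{subsec:VarShape} to derive \eqref{eqn:DNu2}.
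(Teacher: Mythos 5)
Your proposal is correct and follows essentially the same route as the paper: differentiate the pointwise identity \eqref{eqn:HessFEucl} along the family $M_t$ and use that isometricity ($\dot g = 0$, hence $\dot\nabla = 0$) forces $(\Hess f)^\bigdot = \Hess \dot f$. The paper states this last step in a single line, while you spell out the justification, but the argument is the same.
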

\begin{proof}
The isometricity of $\xi$ implies
$$
\Hess \dot f = (\Hess f)^\bigdot,
$$
where $(\Hess f)^\bigdot$ is defined similarly to $\dot B$. Now the lemma follows by differentiating equation \eqref{eqn:HessFEucl}.
\end{proof}

Consider the differential 1--form
$$
\darea_M(\nabla \dot f \wedge \dot B) \in \Omega^1(M).
$$
Here $\nabla \dot f$ is viewed as a $TM$--valued 0--form, and $\dot B$ as a $TM$--valued 1--form on $M$. By Subsection \ref{subsec:BundleVal}, we have
\begin{equation}
\label{eqn:StokesLS}
d(\darea_M(\nabla \dot f \wedge \dot B)) = \darea_M(\Hess \dot f \wedge \dot B) + \darea_M(\nabla \dot f \wedge d^\nabla \dot B),
\end{equation}
where $d^\nabla \colon \Omega^1(M, TM) \to \Omega^2(M, TM)$ is the exterior differential associated with Levi-Civita connection $\nabla$ on $M$. Isometricity of deformation $\xi$ and the Codazzi-Mainardi equations imply
$$
d^\nabla \dot B = (d^\nabla B)^\bigdot = 0,
$$
so that the second summand on the right hand side of \eqref{eqn:StokesLS} vanishes. Further, by Lemma \ref{lem:HessDotF} and because of $2 \det(B, \dot B) = (\det B)^\bigdot = \dot K = 0$ we have
$$
\darea_M(\Hess \dot f \wedge \dot B) = - 2 (\dot h \det(B, \dot B) + h \det \dot B) = -2 h \det \dot B.
$$
By integrating \eqref{eqn:StokesLS} and applying Stokes' theorem we arrive to \eqref{eqn:IntHDotB}.

\begin{rem}
A similar argument is used in the proof of \cite[Lemma 3.1]{LS00} that deals with infinitesimal rigidity in the Minkowski space.
\end{rem}

The following lemma allows to relate the above argument to the argument in Subsection \ref{subsec:OtherProof}.

\begin{lem}
Let $\tau$ be the translation vector field of an isometric infinitesimal deformation $\xi$. Then we have
$$
\nabla \dot f = \top \tau.
$$
\end{lem}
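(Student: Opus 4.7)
The plan is to compute $\dot f$ explicitly from the definition and then differentiate, using Darboux's decomposition $\xi=\eta\times p+\tau$ (Lemma \ref{lem:EtaTau}) together with the identity $d\tau=p\times d\eta$.

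\textbf{Step 1: Compute $\dot f$.} Since $f_t(y)=\tfrac12\|y\|^2$ and $\phi_t(x)=x+t\xi(x)$, we have
$$
f_t\circ\phi_t \;=\; \tfrac12\|p+t\xi\|^2 \;=\; \tfrac12\|p\|^2 + t\,\langle p,\xi\rangle + O(t^2),
$$
so differentiating at $t=0$ gives $\dot f=\langle p,\xi\rangle$. (Note that here we do \emph{not} use that $\xi$ is isometric.)

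\textbf{Step 2: Use the Darboux decomposition.} Substituting $\xi=\eta\times p+\tau$ and using $\langle p,\eta\times p\rangle=0$, we obtain
$$
\dot f \;=\; \langle p,\tau\rangle.
$$

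\textbf{Step 3: Differentiate $\dot f$ along a tangent vector.} For $X\in TM$, the Leibniz rule and $dp(X)=X$ yield
$$
X(\dot f) \;=\; \langle dp(X),\tau\rangle+\langle p,d\tau(X)\rangle \;=\; \langle X,\tau\rangle+\langle p,d\tau(X)\rangle.
$$
By \eqref{eqn:DTau} we have $d\tau(X)=p\times d\eta(X)$, which is orthogonal to $p$, so the second term vanishes. Thus
$$
\langle X,\nabla\dot f\rangle \;=\; X(\dot f) \;=\; \langle X,\tau\rangle \qquad\text{for all }X\in TM,
$$
which says exactly that $\nabla\dot f$ is the orthogonal projection of $\tau$ onto $TM$, i.e.\ $\nabla\dot f=\top\tau$.

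There is essentially no obstacle: the proof is just a direct calculation. The only point requiring a moment of care is recognizing that the two orthogonality cancellations ($\langle p,\eta\times p\rangle=0$ in Step~2 and $\langle p,p\times d\eta(X)\rangle=0$ in Step~3) come from the same fact that a cross product with $p$ is perpendicular to $p$; this is what makes the translation field $\tau$---rather than the full deformation $\xi$---appear as the tangential gradient of $\dot f$.
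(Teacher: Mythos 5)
Your proof is correct and follows essentially the same route as the paper: both start from $\dot f=\langle p,\xi\rangle$ and reduce to $\langle X,\tau\rangle$ via the identities of Lemma \ref{lem:EtaTau}. The only cosmetic difference is that you first rewrite $\dot f=\langle p,\tau\rangle$ and use $d\tau=p\times d\eta$, whereas the paper differentiates $\langle p,\xi\rangle$ directly with $d\xi=\eta\times dp$ and then recognizes $\xi+p\times\eta=\tau$.
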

\begin{proof}
By differentiating the equation $f_t = \|p_t\|^2/2$, we obtain $\dot f = \langle p, \xi \rangle$. Hence for all $X \in TM$ we have
$$
\langle X, \nabla \dot f \rangle = \nabla_X \dot f = \langle X, \xi \rangle + \langle p, \eta \times X \rangle = \langle X, \xi + p \times \eta \rangle = \langle X, \tau \rangle,
$$
where we used equations from Lemma \ref{lem:EtaTau}. This implies the statement of the lemma.
\end{proof}

Together with equation \eqref{eqn:DTauBDot} this implies that the differential 1--forms used here and in Subsection \ref{subsec:OtherProof} are closely related:
$$
\darea_M(\nabla \dot f \wedge \dot B) = \dvol(\nu \wedge \top \tau \wedge (h^{-1} \top \circ d\tau)) = \dvol(h^{-1}\nu \wedge \tau \wedge d\tau).
$$

\section{Metric rigidity and the Hilbert-Einstein functional}
\label{sec:RigHE}
In this section we provide a variational interpretation of the proof of Theorem \ref{thm:InfRigSm} given in Subsections \ref{subsec:OtherProof} and \ref{subsec:HessSquaredDist}. This interpretation does not simplify the arguments, rather conversely. But it supports the ideas suggested in Section \ref{sec:Directions} concerning approaches to the Weyl problem and to the pleating lamination conjecture.

\subsection{Reduction to warped product deformations}
\label{subsec:RedWarp}
Let $M \subset \R^3$ be a smooth convex closed surface such that the coordinate origin $0$ lies in the interior of the body $P$ bounded by $M$. Instead of deforming the embedding of $M$ into $\R^3$, we will deform the metric inside $P$ in the following way. Think of $P$ as composed of thin pyramids with apex at $0$ and bases on $M$; vary the lengths of lateral edges of the pyramids while leaving the base edge lengths constant and thus preserving the intrinsic metric of $M$.

If the lengths of lateral edges are given by a smooth function $r \colon M \to \R$, then this construction gives a smooth metric $\wg_r$ on $P \setminus \{0\}$, called a \emph{warped product metric}. For some functions $r$, the metric $\wg_r$ is Euclidean, for example if $r$ is the distance from some interior point $a \in P$ (if we are just moving the origin inside $P$, so to say). The key point is that the surface $M$ is infinitesimally rigid if and only if no variations of $r$ besides those mentioned above leave the metric $\wg_r$ Euclidean in the first order. The goal of this subsection is to explain this equivalence and to state a reformulation of Theorem~\ref{thm:InfRigSm}.

Consider the radial projection
\begin{gather*}
M \to \Sph^2,\\
y \mapsto \frac{y}{\|y\|},
\end{gather*}
and denote by $\phi_0$ its inverse.

\begin{dfn}
\label{dfn:DistFunc}
The \emph{distance function} of a surface $M$ is
\begin{gather*}
r_0 \colon \Sph^2 \to \R_+,\\
r_0(x) = \|\phi_0(x)\|.
\end{gather*}
\end{dfn}

It follows that the surface $M$ is the image of an embedding
\begin{gather}
\phi_0 \colon \Sph^2 \to \R^3, \nonumber\\
\phi_0(x) = r_0(x) \cdot x. \label{eqn:phi=rx}
\end{gather}

\begin{lem}
\label{lem:IndMetr}
Consider the diffeomorphism
\begin{gather*}
F \colon \R_+ \times \Sph^2 \to \R^3 \setminus \{0\},\\
(\rho, x) \mapsto \rho x.
\end{gather*}
Then the pullback of the canonical Euclidean metric on $\R^3 \setminus \{0\}$ to $\R_+ \times \Sph^2$ by the map $F$ is given by
$$
F^*(\can) = d\rho^2 + \left( \frac{\rho}{r_0} \right)^2 (g - dr_0 \otimes dr_0),
$$
where $g = \phi_0^*(\can)$ is the metric induced on $\Sph^2$ by the embedding \eqref{eqn:phi=rx}.
\end{lem}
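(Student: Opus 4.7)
The plan is a direct computation in two steps: first express $F^*(\can)$ in the classical polar form, then rewrite the round sphere metric in terms of the pulled-back metric $g$ and the distance function $r_0$.

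First I would compute the differential of $F$. A tangent vector to $\R_+ \times \Sph^2$ at $(\rho, x)$ has the form $\dot\rho \, \partial_\rho + X$ with $X \in T_x \Sph^2$; viewing $\Sph^2 \subset \R^3$ so that $T_x \Sph^2 = x^\perp$, one gets $dF(\dot\rho \, \partial_\rho + X) = \dot\rho \cdot x + \rho \cdot X$. Since $\langle x, X\rangle = 0$ and $\|x\| = 1$, Pythagoras in $\R^3$ yields
$$F^*(\can) = d\rho^2 + \rho^2 \, g_{\Sph^2},$$
where $g_{\Sph^2}$ is the round metric on the unit sphere. This is just the standard polar-coordinates form of the Euclidean metric.

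Second I would relate $g_{\Sph^2}$ to $g = \phi_0^*(\can)$. Differentiating the embedding $\phi_0(x) = r_0(x) \cdot x$ in the direction $X \in T_x \Sph^2$ gives $d\phi_0(X) = X(r_0) \, x + r_0(x) \, X$. Using $\langle x, x\rangle = 1$ and $\langle x, Y\rangle = 0$ for $Y \in T_x\Sph^2$, a two-line expansion of $\langle d\phi_0(X), d\phi_0(Y)\rangle$ shows
$$g = dr_0 \otimes dr_0 + r_0^2 \, g_{\Sph^2},$$
so that $g_{\Sph^2} = (g - dr_0 \otimes dr_0)/r_0^2$. Substituting into the first display gives the claimed formula for $F^*(\can)$.

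There is no real obstacle: both steps reduce to a one-line linear-algebra computation exploiting the orthogonality of the radial direction $x$ and the tangent space $T_x\Sph^2$ inside $\R^3$. The only point worth underlining in the write-up is that the decomposition $g = dr_0 \otimes dr_0 + r_0^2 g_{\Sph^2}$ is precisely what allows one to later replace $r_0$ by an arbitrary positive function $r$, obtaining the warped-product deformation $\wg_r$ that is the object of study in Subsection~\ref{subsec:RedWarp}.
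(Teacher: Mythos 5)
Your proposal is correct and follows exactly the paper's argument: the paper likewise quotes the polar form $F^*(\can) = d\rho^2 + \rho^2\can_{\Sph^2}$ and then reduces the claim to the identity $g = r_0^2\,\can_{\Sph^2} + dr_0\otimes dr_0$, obtained by differentiating $\phi_0(x)=r_0(x)\cdot x$. Your write-up merely fills in the orthogonality computations that the paper leaves implicit.
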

\begin{proof}
We know that $F^*(\can) = d\rho^2 + \rho^2 \can_{\Sph^2}$. Thus we have to show that
$$
\can_{\Sph^2} = \frac{1}{r_0^2} (g - dr_0 \otimes dr_0),
$$
or, equivalently, $g = r_0^2 \can_{\Sph^2} + dr_0 \otimes dr_0$. The last equation follows easily by taking the differential of \eqref{eqn:phi=rx}.
\end{proof}

Lemma \ref{lem:IndMetr} motivates the following construction. Let $g$ be a Riemannian metric on $\Sph^2$, and let $r \colon \Sph^2 \to \R_+$ be a smooth function such that
\begin{equation}
\label{eqn:GradR}
\|\nabla r\|_g < 1
\end{equation}
everywhere on $\Sph^2$. Associate with $(g,r)$ a Riemannian metric
\begin{equation}
\label{eqn:TildeGR}
\widetilde{g}_r = d\rho^2 + \left(\frac{\rho}{r}\right)^2 (g - dr \otimes dr)
\end{equation}
on $\R_+ \times \Sph^2$. (Condition \eqref{eqn:GradR} ensures that the symmetric bilinear form $g - dr \otimes dr$ is positive definite.) It is easy to see that the map
\begin{gather}
\phi_r \colon \Sph^2 \to \R_+ \times \Sph^2, \nonumber\\
\phi_r(x) = (r(x), x) \label{eqn:PhiR}
\end{gather}
embeds $(\Sph^2, g)$ isometrically into $(\R_+ \times \Sph^2, \wg_r)$.

Assume that the metric \eqref{eqn:TildeGR} is flat, i.~e. has a vanishing curvature tensor. Then (see \cite{SpiII}, \cite[Chapter 2.3]{Wolf11}) the Riemannian manifold $(\R_+ \times \Sph^2, \wg_r)$ is locally isometric to $\R^3$, moreover there is an isometry $\R_+ \times \Sph^2 \to \R^3 \setminus \{0\}$, since the source space is simply-connected and the metric $\wg_r$ is complete. It follows that the map \eqref{eqn:PhiR} composed with this isometry is an isometric embedding of $(\Sph^2, g)$ into~$\R^3$.

In particular, if the metric $g$ is induced by an embedding $\phi_0 \colon \Sph^2 \to \R^3$ and $r_0(x) = \|\phi_0(x)\|$, then the metric $\wg_{r_0}$ is flat. By the previous paragraph, embedding $\phi_0$ is determined by $r_0$ up to an isometry of $\R^3$ fixing $0$.

\begin{rem}
The embedding $\phi_0$ from the previous paragraph need not be the inverse of the radial projection. Indeed, if the metric $\wg_r$ is flat, then so is the metric determined by $g' = \psi^* g$ and $r' = r \circ \psi$ for any diffeomorphism $\psi \colon \Sph^2 \to \Sph^2$. The distinctive feature of the parametrization \eqref{eqn:phi=rx} is that the corresponding isometry $(\R_+ \times \Sph^2, \wg_r) \to (\R^3 \setminus \{0\}, \can)$ is the map $\Phi$ in Lemma \ref{lem:IndMetr}.
\end{rem}

Consider a smooth function $s \colon \Sph^2 \to \R$ and put
$$
r_t = r_0 + ts.
$$
As $r_0$ satisfies condition \eqref{eqn:GradR}, $r_t$ also satisfies \eqref{eqn:GradR} for all sufficiently small~$t$. Let $\wR_t$ be the curvature tensor of the metric $\wg_{r_t}$ on $\R_+ \times \Sph^2$; denote
$$
\dot R = \lim_{t \to 0} \frac{\wR_t - \wR_0}{t}.
$$

\begin{dfn}
\label{dfn:SCurvPres}
A function $s \colon \Sph^2 \to \R$ is called a \emph{curvature preserving infinitesimal deformation} of $r_0$, if $\dot R = 0$.
\end{dfn}

\begin{dfn}
\label{dfn:STriv}
A function $s \colon \Sph^2 \to \R$ is called a \emph{trivial infinitesimal deformation} of $r_0$, if there exists an $a \in \R^3$ such that
\begin{equation}
\label{eqn:STriv}
s(x) = \left\langle a, \frac{\phi_0(x)}{\|\phi_0(x)\|} \right\rangle.
\end{equation}
\end{dfn}

As the embedding $\phi_0$ is determined by $r_0$ up to an isometry of $\R^3$ fixing $0$, the class of trivial deformations is well-defined.

\begin{lem}
\label{lem:STrivCurvPres}
If $s$ is trivial in the sense of Defitinion \ref{dfn:STriv}, then $s$ is curvature preserving in the sense of Definition \ref{dfn:SCurvPres}.
\end{lem}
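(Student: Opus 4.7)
The geometric content is that every trivial deformation $s$ arises from translating the surface $M$ by $ta$, which is a Euclidean isometry and therefore trivially preserves flatness of the ambient warped product metric. To exploit this rigorously I would, for each small $t$, produce an explicit isometric embedding of $(\R_+ \times \Sph^2, \wg_{r_t})$ into $(\R^3, \can)$, where $r_t$ is a one-parameter family of smooth functions on $\Sph^2$ satisfying $r_t = r_0 + ts + O(t^2)$. Since $\dot R$ depends only on the first-order jet of the family, the $O(t^2)$ discrepancy between $r_t$ and $r_0 + ts$ is harmless.

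Since $\phi_0(x) = r_0(x)\, x$ with $x \in \Sph^2$, the trivial deformation reduces to $s(x) = \langle a, x\rangle$. The family I would use is
\[
r_t(x) \;:=\; \|\phi_0(x) + ta\|,
\]
whose Taylor expansion indeed gives $r_t = r_0 + ts + O(t^2)$. To flatten $\wg_{r_t}$ I define
\[
F_t \colon \R_+ \times \Sph^2 \to \R^3, \qquad F_t(\rho, x) := -ta + \rho\, \beta_t(x), \qquad \beta_t(x) := \frac{\phi_0(x) + ta}{r_t(x)} \in \Sph^2.
\]
For $t = 0$ this specializes to the map of Lemma~\ref{lem:IndMetr}, and by construction $F_t(r_t(x), x) = \phi_0(x)$, so the distinguished boundary $\{\rho = r_t(x)\}$ lands on $M$ independently of $t$.

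The main calculation is then to check $F_t^* \can = \wg_{r_t}$. Since $\|\beta_t\| \equiv 1$, the image $dF_t(\partial_\rho) = \beta_t$ has unit length and is orthogonal to every $dF_t(X) = \rho\, d\beta_t(X)$, producing the $d\rho^2$ summand with no cross terms. Differentiating $r_t^2 = \|\phi_0 + ta\|^2$ gives the identity $r_t\, dr_t(X) = \langle \phi_0 + ta, d\phi_0(X)\rangle$, and a short computation using $g = \phi_0^*\can$ then yields
\[
\|d\beta_t(X)\|^2 \;=\; r_t^{-2}\bigl(g(X,X) - dr_t(X)^2\bigr) \;=\; h_{r_t}(X,X).
\]
Thus $F_t^*\can = d\rho^2 + \rho^2 h_{r_t} = \wg_{r_t}$, so $\wg_{r_t}$ is flat for every sufficiently small $t$, and in particular $\dot R = 0$ at $t = 0$.

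The main conceptual step is the choice of the upgraded family: the literal linear family $r_0 + ts$ is generally \emph{not} curvature-preserving for $t \ne 0$, so one cannot invoke a straight ``for all $t$'' argument; one must recognize the quadratically nearby family $\|\phi_0 + ta\|$ that admits the explicit flattening $F_t$. Once this family is identified, the verification reduces to the few lines of differentiating $\beta_t$ above.
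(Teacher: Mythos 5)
Your proof is correct and follows essentially the same route as the paper: both replace the linear family $r_0+ts$ by the quadratically close family $r'_t=\|\phi_0+ta\|$ and observe that its associated warped product metrics are exactly flat, so that $\dot R=0$ follows from the $o(t)$ discrepancy. The only difference is cosmetic: where the paper cites the previously established fact that the distance function of an isometric embedding yields a flat warped product (Lemma \ref{lem:IndMetr} and the discussion following it), you re-derive this by exhibiting the flattening isometry $F_t$ explicitly.
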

\begin{proof}
Let $s$ be as in \eqref{eqn:STriv}. Consider the function
\begin{gather*}
r'_t(x) = \|\phi_0(x) + ta\|.
\end{gather*}
As $r'_t$ is the distance function of the surface $M$ translated by the vector $-ta$, the metric $\wg_{r'_t}$ is flat. But since $r'_t - r_t = o(t)$ in the $C^2$--norm, the metric $\wg_{r_t}$ is flat in the first order. Thus $\dot R = 0$, and $s$ is curvature preserving.
\end{proof}

Now we can state the main result of this subsection.
\begin{thm}
\label{thm:RedToR}
A smooth convex closed surface $M$ is infinitesimally rigid if and only if every curvature preserving deformation of its distance function is trivial.
\end{thm}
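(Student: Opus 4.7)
The plan is to show that the natural map $\xi \mapsto s_\xi$, defined by $s_\xi(x) := \left.\tfrac{d}{dt}\right|_{t=0}\|\phi_0(x) + t\xi(x)\|$, descends to a linear isomorphism
$$ \Phi \colon \{\text{isometric }\xi\}/\{\text{Killing}\} \xrightarrow{\sim} \{\text{curvature-preserving }s\}/\{\text{trivial }s\}. $$
Infinitesimal rigidity of $M$ is the vanishing of the source, and ``every curvature-preserving $s$ is trivial'' is the vanishing of the target, so the theorem is immediate once $\Phi$ is shown to be an isomorphism.

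Well-definedness of $\Phi$ and the fact that Killing $\xi$'s map to trivial $s$'s both follow from the identity $F_\alpha^* \can = \wg_r$ for an isometric embedding $\alpha \colon (\Sph^2, g) \to \R^3$ and the diffeomorphism $F_\alpha \colon (\rho, x) \mapsto \rho\, \alpha(x)/\|\alpha(x)\|$, with $r := \|\alpha\|$: applied with $\alpha_t := \phi_0 + t\xi$ for isometric $\xi$, it shows $\wg_{r_t}$ is flat to first order, so $s_\xi$ is curvature preserving; applied with a Killing $\xi(x) = a \times \phi_0(x) + b$, it gives $s_\xi(x) = \langle b, x\rangle$, which is trivial. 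Surjectivity of $\Phi$ on the quotients uses that $\dot R = 0$ on the simply-connected flat space $(\R_+ \times \Sph^2, \wg_{r_0})$ implies, by linear rigidity of flat metrics, the existence of a vector field $Y$ with $\mathcal{L}_Y \wg_{r_0} = \dot{\wg}$; integrating the flow of $\tilde Y := F_*Y$ on $\R^3$, the composition $\Psi_t \circ F \circ \phi_{r_0 + ts}$ is isometric to first order and furnishes an isometric $\dot\xi$ whose associated $s_{\dot\xi}$ agrees with $s$ modulo trivial $s$'s (the discrepancy being accounted for by the Killing-field freedom in $Y$).

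The injectivity of $\Phi$ --- equivalently the $\Leftarrow$ direction of the theorem --- is the main obstacle and chains together several lemmas of Section \ref{sec:RigHE}. Given isometric $\xi$ with $s_\xi$ trivial, subtract the translation Killing field $b$ to reduce to the case $s_\xi \equiv 0$. Then $\dot f = r_0 s_\xi = 0$ where $f := \tfrac12 \|p\|^2$, whence the identity $\nabla \dot f = \top \tau$ of Subsection \ref{subsec:HessSquaredDist} gives $\tau = \lambda \nu$ for some scalar function $\lambda$. Combining the direct computation $\top \circ d\tau = \lambda B$ (from $\tau = \lambda \nu$) with the formula $\top \circ d\tau = -h \dot B$ of Lemma \ref{lem:DEtaBDot} yields $\dot B = -(\lambda/h) B$, so $\dot K = 2\det(B, \dot B) = -2(\lambda/h) K$; but $\dot K = 0$ by Theorema Egregium (since $\xi$ is isometric) and $K, h > 0$, forcing $\lambda = 0$. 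Hence $\tau = 0$ and $\dot B = 0$, so by Lemma \ref{lem:DEtaBDot} $d\eta = J\dot B = 0$, making the rotation field $\eta$ constant; thus $\xi = \eta \times p$ is a rotation Killing field, and $\xi$ (after re-adding the translation $b$) is Killing.
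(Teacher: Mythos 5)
Your proposal follows the paper's own architecture for everything except the injectivity of $\Phi$ (the $\Leftarrow$ direction): the forward map $\xi \mapsto s_\xi$, the verification that isometric fields give curvature-preserving variations via the flatness of the warped product built from the induced metric and $\|\phi_0 + t\xi\|$, and the surjectivity via Calabi's linearized rigidity of flat metrics are exactly the paper's steps (and your surjectivity sketch leaves the existence and normalization of the limit of $Y$ at the cone point at the same level of detail as the paper does). Where you genuinely diverge is injectivity. The paper obtains it by asserting that the construction $s \mapsto \xi$ coming from Calabi's result is inverse, modulo the respective trivial deformations, to $\xi \mapsto s_\xi$ --- arguably the most delicate and least documented claim in its proof. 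You instead prove directly that an isometric $\xi$ with trivial $s_\xi$ is Killing: after subtracting the constant field $a$ (you write $b$; a cosmetic slip) you get $\dot f = 0$, hence $\top\tau = 0$, hence $\tau = \lambda\nu$, and combining $\top\circ d\tau = \lambda B$ with \eqref{eqn:DTauBDot} gives $\dot B = -(\lambda/h)B$, so $0 = \dot K = -2(\lambda/h)K$ forces $\lambda = 0$, then $\dot B = 0$ and $d\eta = 0$. This chain is correct and pleasantly pointwise --- no integration is needed because $s_\xi = 0$ already confines $\tau$ to the normal direction --- and it sidesteps the mutual-inverse bookkeeping entirely. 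What it costs: you use $K > 0$ and $h > 0$, so your argument establishes the theorem only for strictly convex $M$ with $0 \in \inn P$, whereas the paper's correspondence argument needs no curvature hypothesis and Theorem \ref{thm:RedToR} as stated assumes only convexity; and you import Lemmas \ref{lem:EtaTau}, \ref{lem:DEtaBDot}, \ref{lem:ConstRot} and the identity $\nabla\dot f = \top\tau$ from Section \ref{subsec:HessSquaredDist} --- all logically independent of Theorem \ref{thm:RedToR}, so there is no circularity, but the reduction is no longer self-contained relative to the machinery later used to prove rigidity itself. For the paper's actual application (strictly convex surfaces) your version is a valid and in places more explicit substitute.
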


\begin{proof}
The theorem is proved by establishing a correspondence between isometric deformations $\xi$ of $M$ and curvature preserving deformations $s$ of the distance function $r_0$ so that trivial deformations correspond to trivial ones.

Let $\xi \colon \Sph^2 \to \R^3$ be a vector field along an embedding $\phi_0 \colon \Sph^2 \to \R^3$. Define the function $s \colon \Sph^2 \to \R$ as
\begin{equation}
\label{eqn:XiToS}
s(x) = \left\langle \xi(x), \frac{\phi_0(x)}{\|\phi_0(x)\|} \right\rangle.
\end{equation}
We claim that if $\xi$ is an isometric infinitesimal deformation, then $s$ is curvature preserving. The proof is similar to that of Lemma \ref{lem:STrivCurvPres}. Consider the embedding $\phi_t = \phi_0 + t\xi$. Let $r'_t = \|\phi_t\|$ be its distance function, and let $g^t$ be the metric induced on $\Sph^2$ by $\phi_t$. It follows that the metric on $\R_+ \times \Sph^2$ obtained by substituting in \eqref{eqn:TildeGR} $g^t$ for $g$ and $r'_t$ for $r$ is flat. As $r'_t - r_t = o(t)$, and $g_t - g = o(t)$ in the $C^2$--norm, the metric $\wg_{r_t}$ is flat in the first order.

Now let $s$ be such that $\dot R = 0$. By \cite[Proposition 3]{Cal61}, an ifninitesimal deformation $\widetilde{h}$ of a flat Riemannian metric $\wg$ leaving the curvature zero in the first order is locally induced by an infinitesimal diffeomorphism; i.~e. $\widetilde{h}$ is the Lie derivative of $\wg$ along a vector field $\eta$. As $\R_+ \times \Sph^2$ is simply-connected, the local vector field $\eta$ can be extended to a global one, so that we have
$$
\left. \frac{d}{dt} \right|_{t=0} \wg_{r_t} = {\mathcal L}_\eta\, \wg_{r_0}.
$$
Note that $\eta$ is defined uniquely up to adding a Killing vector field with respect to the flat metric $\wg_{r_0}$. It can be shown that the limit
$$
\eta(0) := \lim_{\rho \to 0} \eta(\rho, x)
$$
exists. Add to $\eta$ a parallel translation so that $\eta(0) = 0$ and put
$$
\xi(x) = \eta(r_0(x), x).
$$
It is easy to see that $\xi$ is an isometric infinitesimal deformation of the surface
$$
M = \{(r_0(x), x)\} \subset \R_+ \times \Sph^2.
$$
Besides, the variation $s$ of the distance of the point $(r_0(x), x)$ from the singular point $\rho = 0$ is related to $\xi$ through the formula \eqref{eqn:XiToS}. (This is due to the fact that $\eta(0) = 0$ and that the lines $\R_+ \times \{x\}$ are geodesics in the metric $\wg_r$.) It follows that the map $s \mapsto \xi$ thus obtained is the inverse of the map constructed in the previous paragraph. Note that $\xi$ is well-defined up to adding a rotation around $0$.

Trivial deformations are sent to trivial ones; in particular, if $s$ satisfies \eqref{eqn:STriv}, then we can put
$$
\eta(\rho,x) = \frac{\rho}{r_0(x)} a
$$
which results in $\xi = a$.
\end{proof}

The curvature tensor of the metric $\wg_r$ is completely determined by sectional curvatures in some small set of planes, see Section \ref{sec:CurvWarped}. For example, it suffices to take all planes tangent to the surface
\begin{equation}
\label{eqn:MR}
M_r = \{(r(x), x)\}.
\end{equation}
\begin{dfn}
Denote by $\sec(r)(x)$ the sectional curvature of the Riemannian manifold $(\R_+ \times \Sph^2, \wg_r)$ in the plane $T_{(r(x), x)} M_r$. We will usually omit the argument $r$ of $\sec$.
\end{dfn}

Let $s = \dot r$ be an arbitrary infinitesimal deformation of the function $r$. Denote by $\sec^\bigdot$ the derivative of $\sec$ in the direction $\dot r$:
$$
\sec^\bigdot(x) = \lim_{t \to 0} \frac{\sec(r + t \dot r)(x) - \sec(r)(x)}{t}.
$$

\begin{cor}
\label{cor:RedToSec}
A smooth convex closed surface $M = \phi_0(\Sph^2)$ is infinitesimally rigid if and only if every infinitesimal deformation $\dot r$ of its distance function $r_0$ such that $\sec^\bigdot = 0$ is trivial.
\end{cor}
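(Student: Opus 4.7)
The plan is to deduce Corollary \ref{cor:RedToSec} from Theorem \ref{thm:RedToR} by establishing, for every infinitesimal deformation $\dot r$ of the distance function $r_0$, the pointwise equivalence
$$
\dot R = 0 \iff \sec^\bigdot = 0.
$$
Once this is available, both conditions cut out the same subspace of infinitesimal deformations of $r_0$, and Corollary \ref{cor:RedToSec} follows immediately from Theorem \ref{thm:RedToR}, which already characterizes infinitesimal rigidity via the triviality of $\dot r$ with $\dot R = 0$.

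The forward implication is automatic: the sectional curvature $\sec(r_t)(x)$ is obtained by contracting the curvature tensor $\wR_t$ of $\wg_{r_t}$ against the 2-plane $T_{(r_t(x),x)}M_{r_t}$, which varies smoothly with $t$. Since contraction against a smoothly varying 2-plane commutes with $\partial_t$ at $t=0$, the hypothesis $\dot R = 0$ forces $\sec^\bigdot = 0$.

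For the converse I appeal to the structural claim promised in Section \ref{sec:CurvWarped}: for a warped product metric of the form $\wg_r = d\rho^2 + (\rho/r)^2 (g - dr\otimes dr)$ on $\R_+ \times \Sph^2$, the curvature tensor $\wR$ is completely determined by the scalar function $\sec(r) \colon \Sph^2 \to \R$ measuring its sectional curvature in the planes tangent to $M_r$, together with the background data $g$ and the function $r$ (and their derivatives). Applying this determination along the one-parameter family $r_t = r_0 + t\dot r$ and differentiating at $t=0$ turns $\sec^\bigdot \equiv 0$ into $\dot R \equiv 0$; this is legitimate because the passage from $(g,r,\sec(r))$ to $\wR$ is given by an explicit, smooth algebraic--differential expression, so its linearization in $\dot r$ is well defined and vanishes as soon as the linearization of $\sec(r)$ vanishes.

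The only non-formal content is therefore the structural determination claim itself, which is the main obstacle and is deferred to Section \ref{sec:CurvWarped}. Granted that claim, the corollary is a one-line consequence of Theorem \ref{thm:RedToR} via the equivalence displayed above.
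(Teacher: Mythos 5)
Your overall route is the one the paper intends: reduce to Theorem \ref{thm:RedToR} by showing that, at the base point $r_0$, the conditions $\dot R = 0$ and $\sec^\bigdot = 0$ cut out the same space of deformations, the key input being the description of the curvature of $\wg_r$ in Section \ref{sec:CurvWarped}. But the way you justify the equivalence contains a genuine gap. From ``$\wR$ is completely determined by $(g,r,\sec(r))$'' it does \emph{not} follow that the linearization of $r \mapsto \wR$ vanishes whenever the linearization of $r \mapsto \sec(r)$ does: writing $\wR = \Phi(g,r,\sec(r))$, the chain rule gives $\dot R = D_r\Phi\cdot\dot r + D_{\sec}\Phi\cdot\sec^\bigdot$, and the explicit $r$-dependence of $\Phi$ (through $\dvol$, the cross product, and the plane of evaluation) contributes a term your argument never addresses. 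The same issue affects your forward direction: ``contraction against a smoothly varying $2$-plane commutes with $\partial_t$'' is false in general, since differentiating $\sec_t$ at the moving point $(r_t(x),x)$ in the moving plane $TM_{r_t}$ produces terms in which $\wR_0$ and $\wnabla\wR_0$ are contracted against the velocities of the point and the plane.

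What rescues both directions --- and what your proposal never invokes --- is that $\wg_{r_0}$ is \emph{flat}, so $\wR_0 = 0$ and $\sec(r_0)\equiv 0$. By Lemma \ref{lem:WarpProdCurv}, equation \eqref{eqn:R}, the dependence of $\wR$ on $\sec$ is multiplicative, $\wR = \sec(\partial_\rho^\perp)\cdot T$ with $T$ an auxiliary tensor built from $g$ and $r$; differentiating at a base point where the scalar factor vanishes identically kills every term except $(\sec(\partial_\rho^\perp))^\bigdot\cdot T$, and likewise kills the moving-point and moving-plane terms in the forward direction. One then needs \eqref{eqn:SecL} together with $\cos\alpha\neq 0$ to pass between $\sec(\partial_\rho^\perp)$ and the quantity $\sec$ actually defined via planes tangent to $M_r$, and \eqref{eqn:Sec1} to propagate the vanishing of $(\sec(\partial_\rho^\perp))^\bigdot$ from the surface $M_{r_0}$ to all of $\R_+\times\Sph^2$. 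With these points supplied the argument closes; as written, the crucial implication $\sec^\bigdot=0\Rightarrow\dot R=0$ is asserted rather than proved.
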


\subsection{Hessian of the squared distance function in a warped product}
Consider the function
\begin{gather*}
f \colon \R_+ \times \Sph^2 \to \R,\\
f(\rho, x) = \frac{\rho^2}2,
\end{gather*}
and denote by ${\widetilde\Hess} f$ its Hessian with respect to the metric $\wg_r$ in \eqref{eqn:TildeGR}, ${\widetilde\Hess} f(X) = \wnabla_X \wnabla f$.

\begin{lem}
\label{lem:HessTilde}
$$
{\widetilde\Hess} f = \id
$$
\end{lem}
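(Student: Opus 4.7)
The plan is to observe that $\wg_r$ has the shape of a standard cone warped product in the radial direction and then invoke the well-known connection formulas for such warped products.

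First I would rewrite the metric as $\wg_r = d\rho^2 + \rho^2 h$, where
\[
h = \frac{g - dr \otimes dr}{r^2}
\]
is a Riemannian metric on $\Sph^2$ that depends on $g$ and $r$ but \emph{not} on $\rho$. Thus $(\R_+ \times \Sph^2, \wg_r)$ is a warped product over the base $\R_+$ with fiber $(\Sph^2, h)$ and warping function $\rho$. In particular $\partial_\rho$ is a unit vector field whose integral curves are geodesics, so $\wnabla_{\partial_\rho} \partial_\rho = 0$, and for any vector field $X$ that is lifted from $\Sph^2$ (hence $[\partial_\rho, X] = 0$ and $X(\rho) = 0$) the standard warped-product identity gives
\[
\wnabla_X \partial_\rho = \wnabla_{\partial_\rho} X = \frac{1}{\rho}\, X.
\]

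Next I would compute the gradient. Since $df = \rho\, d\rho$ and $\wg_r(\partial_\rho, \cdot) = d\rho$, we get
\[
\wnabla f = \rho\, \partial_\rho.
\]
Then I would verify ${\widetilde\Hess} f(X) = \wnabla_X(\rho\, \partial_\rho) = X$ on the two types of vectors that span $T(\R_+ \times \Sph^2)$. For $X = \partial_\rho$ we have $\wnabla_{\partial_\rho}(\rho \partial_\rho) = \partial_\rho(\rho)\, \partial_\rho + \rho\, \wnabla_{\partial_\rho}\partial_\rho = \partial_\rho$. For $X$ tangent to the $\Sph^2$ factor we have $X(\rho) = 0$, so $\wnabla_X(\rho \partial_\rho) = \rho \cdot \frac{1}{\rho}\, X = X$. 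By $\wg_r$-linearity in $X$ this yields $\widetilde{\Hess} f = \id$ on all of $T(\R_+\times \Sph^2)$.

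The only thing that could be called an obstacle is justifying the warped-product connection formula $\wnabla_X \partial_\rho = X/\rho$, but in this setting it is immediate from Koszul's formula using $\wg_r(\partial_\rho,\partial_\rho) = 1$, $\wg_r(\partial_\rho, X) = 0$, and $\wg_r(X,Y) = \rho^2\, h(X,Y)$ with $h$ independent of $\rho$; indeed then $\partial_\rho\, \wg_r(X,Y) = \frac{2}{\rho}\, \wg_r(X,Y)$, and Koszul immediately gives $2\wg_r(\wnabla_X \partial_\rho, Y) = \partial_\rho\, \wg_r(X,Y) = \frac{2}{\rho}\, \wg_r(X,Y)$, whence the claim.
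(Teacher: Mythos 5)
Your proof is correct, but it takes a different route from the paper's. You compute the Levi-Civita connection of the warped product explicitly --- establishing $\wnabla_{\partial_\rho}\partial_\rho = 0$ and $\wnabla_X \partial_\rho = X/\rho$ for $X$ lifted from the fiber via the Koszul formula --- and then evaluate $\wnabla_X(\rho\,\partial_\rho)$ separately on radial and spherical directions. The paper instead uses the identity $\wg_r\bigl({\widetilde\Hess} f(X), Y\bigr) = \frac12 {\mathcal L}_{\wnabla f}\, \wg_r(X,Y)$ and observes that the flow of $\wnabla f = \rho\,\partial_\rho$ scales the $\rho$-coordinate by $e^t$, hence acts on the cone metric $d\rho^2 + \rho^2 \hat g$ as a homothety with ${\mathcal L}_{\wnabla f}\,\wg_r = 2\wg_r$, which gives the result in one line without ever touching the connection. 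Both arguments ultimately exploit the same geometric fact --- that $\rho\,\partial_\rho$ generates dilations of the cone --- but the Lie-derivative version is shorter and avoids the case split, while yours is more self-contained for a reader who does not have the Hessian--Lie-derivative identity at hand. Your Koszul verification of $\wnabla_X\partial_\rho = X/\rho$ is sound: the key point, which you correctly isolate, is that $h = (g - dr\otimes dr)/r^2$ is independent of $\rho$, so $\partial_\rho\,\wg_r(X,Y) = \tfrac{2}{\rho}\,\wg_r(X,Y)$ for fiber-lifted $X, Y$.
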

\begin{proof}
We have (see \cite[Chapter 2, Section 1.3]{Pet06})
$$
\widetilde g_r \left( {\widetilde\Hess} f(X), Y \right) = \frac12 {\mathcal L}_{\wnabla f}\, \widetilde g_r (X, Y).
$$
The flow $F_t$ on $\R_+ \times \Sph^2$ generated by $\wnabla f$ scales the $\rho$-coordinate by $e^t$. It follows that
$$
{\mathcal L}_{\wnabla f}\, \widetilde g_r = 2\widetilde g_r.
$$
Thus ${\widetilde\Hess} f(X) = X$ for all $X$, and the lemma is proved.
\end{proof}

We now introduce several vector fields along the surface $M_r \subset \R_+ \times \Sph^2$ given by \eqref{eqn:MR}. First, there is the field of outward unit normals $\nu$. Second, let $\partial_\rho$ be the radial unit vector field on $\R_+ \times \Sph^2$. The vector field $\rho \partial_\rho$ generalizes the position vector field $p$ that we considered in the case of a surface $M \subset \R^3$. Since $\partial_\rho$ has a unit norm and is orthogonal to $T\Sph^2$ with respect to $\widetilde g_r$, it is also the gradient field of the function $\rho \colon (\rho, x) \mapsto \rho$. It follows that
\begin{equation}
\label{eqn:WnablaF}
\wnabla f = \rho \wnabla \rho = \rho \partial_\rho.
\end{equation}

Denote the restriction of the function $\rho$ to $M_r$ by $r$:
\begin{gather*}
r \colon M_r \to \R,\\
r = \rho|_{M_r}.
\end{gather*}
(This is an abuse of notation, as earlier we denoted by $r$ the function $\phi_r^{-1} \circ \rho$ on $\Sph^2$.) Let us introduce two auxiliary functions on $M_r$.

\begin{dfn}
\label{dfn:AlphaH}
Denote by $\alpha \colon M_r \to [0, \frac\pi 2)$ the angle between vectors $\partial_\rho$ and $\nu$, and define $h \colon M_r \to \R_+$ as $h = r \cos\alpha$.
\end{dfn}

The function $h$ generalizes the support function defined in the case of a surface $M \subset \R^3$.

\begin{figure}[ht]
\begin{center}
\input{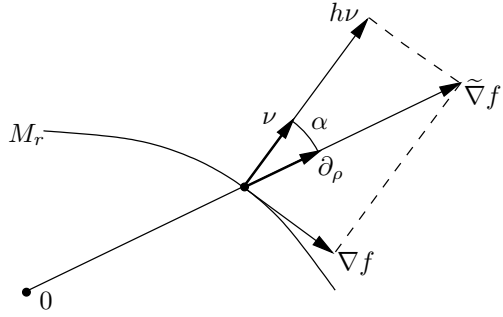}
\end{center}
\caption{The gradients of $f=\frac{\rho^2}2$.}
\label{fig:Gradients}
\end{figure}

\begin{lem}
\label{lem:HessFShape}
Let $\nabla$ be the covariant derivative on $M_r$ associated with the metric $g = \wg_r|_{M_r}$. Let $\Hess f: X \mapsto \nabla_X \nabla f$ be the Hessian of $f|_{M_r}$. Then we have
\label{lem:Hess}
\begin{equation}
\label{eqn:Hess}
\Hess f = \id - hB,
\end{equation}
where $B \colon X \mapsto \wnabla_X\nu$ is the shape operator on $M_r$.
\end{lem}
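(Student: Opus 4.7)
The plan is to mirror the Euclidean argument of Lemma \ref{eqn:HessFEucl}, replacing the role of the position vector $p$ (for which $\wnabla_X p = X$ in $\R^3$) by the ambient gradient $\wnabla f = \rho\partial_\rho$, for which the same identity $\wnabla_X \wnabla f = X$ now holds by Lemma \ref{lem:HessTilde}. The rest of the argument is a decomposition into tangential and normal parts along $M_r$.

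First, I would decompose $\wnabla f$ with respect to the splitting $T(\R_+\times\Sph^2)|_{M_r} = TM_r \oplus \R\nu$. The normal component is $\langle \wnabla f, \nu\rangle \nu = \langle \rho\partial_\rho, \nu\rangle \nu$; since $\rho|_{M_r} = r$ and $\langle \partial_\rho, \nu\rangle = \cos\alpha$ by Definition \ref{dfn:AlphaH}, this equals $h\nu$. Hence on $M_r$ the intrinsic gradient satisfies
$$
\nabla f = \top(\wnabla f) = \wnabla f - h\nu.
$$

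Second, I would compute $\Hess f$ by applying $\wnabla_X$ and projecting tangentially. For $X \in TM_r$,
$$
\Hess f(X) = \nabla_X\nabla f = \top\bigl(\wnabla_X \wnabla f\bigr) - \top\bigl(\wnabla_X(h\nu)\bigr).
$$
Lemma \ref{lem:HessTilde} gives $\wnabla_X\wnabla f = X$, already tangent. The Leibniz rule gives $\wnabla_X(h\nu) = X(h)\nu + h\wnabla_X\nu = X(h)\nu + hB(X)$, whose tangential part is $hB(X)$. Combining yields $\Hess f(X) = X - hB(X)$, which is the claimed identity.

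There is essentially no obstacle: the only point that needs to be checked, beyond Lemma \ref{lem:HessTilde}, is that $\langle \wnabla f, \nu\rangle = h$, which is immediate from the definitions of $\alpha$ and $h$ together with \eqref{eqn:WnablaF}. The formula $\Hess f = \id - hB$ then holds verbatim as in the Euclidean case, which is consistent with Figure \ref{fig:Gradients} showing the decomposition $\wnabla f = \nabla f + h\nu$.
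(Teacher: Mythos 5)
Your proposal is correct and follows essentially the same route as the paper: both identify the normal component of $\wnabla f$ as $h\nu$ using $\langle \partial_\rho,\nu\rangle=\cos\alpha$ and $h=r\cos\alpha$, then apply $\top\circ\wnabla_X$ to $\wnabla f-h\nu$, invoking $\widetilde{\Hess}f=\id$ and the Leibniz rule on $h\nu$ to obtain $X-hB(X)$. No gaps.
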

\begin{proof}
By \eqref{eqn:WnablaF} and Definition \ref{dfn:AlphaH} we have $\nabla f = \top(\wnabla f) = \wnabla f - h\nu$. Further,
\begin{equation*}
\begin{split}
\nabla_X \nabla f &= \top(\wnabla_X \nabla f) = \top(\wnabla_X(\wnabla f - h\nu))\\
&= \top({\widetilde\Hess} f(X) - X(h)\nu - h B(X)) = X - h B(X),
\end{split}
\end{equation*}
which proves the lemma.
\end{proof}

\subsection{The Hilbert-Einstein functional}
Denote
$$
P_r = \{(\rho, x) \in \R_+ \times \Sph^2\ |\ \rho \le r(x)\}.
$$

\begin{dfn}
The \emph{Hilbert-Einstein functional} of the Riemannian manifold with boundary $(P_r, \wg_r)$ is defined as
\begin{equation}
\label{eqn:HEDef}
\HE(r) = \frac12 \int_{P_r} \scal \, \dvol + 2 \int_{M_r} H\, \darea,
\end{equation}
where $\scal$ is the scalar curvature of the metric $\wg_r$, and $H = \frac12 \tr B$ is the total mean curvature of the surface $M_r = \partial P_r$.
\end{dfn}

The manifold $D_r$ is not compact (as we exclude the point $\rho = 0$). However the first integral in \eqref{eqn:HEDef} converges; this follows from the next lemma that expresses it as an integral over $M_r$.

Recall that $\sec(x)$ denotes the sectional curvature of $\wg_r$ in the plane tangent to $M_r$ at $(r(x), x)$.

\begin{lem}
\label{lem:TotScal}
$$
\int_{P_r} \scal \, \dvol = 2 \int_{M_r} r \frac{\sec}{\cos\alpha}  \, \darea,
$$
\end{lem}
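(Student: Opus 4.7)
The plan is to push both sides to integrals over $\Sph^2$ via the warped product structure and compare integrands pointwise.

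First I rewrite $\wg_r = d\rho^2 + \rho^2 \bar h$ with $\bar h = (g - dr\otimes dr)/r^2$, a Riemannian metric on $\Sph^2$ that does \emph{not} depend on $\rho$, so $\wg_r$ is a metric cone over $(\Sph^2, \bar h)$. The volume element factorizes as $\dvol = \rho^2\, d\rho \wedge \darea_{\bar h}$. The warped product formulas recalled in Section \ref{sec:CurvWarped} (with $\phi(\rho) = \rho$, so $\phi' = 1$ and $\phi'' = 0$) show that every sectional curvature of $\wg_r$ involving $\partial_\rho$ vanishes, while the plane tangent to the slice $\{\rho = \const\}$ has sectional curvature $(K_{\bar h}(x) - 1)/\rho^2$; tracing in an orthonormal frame yields $\scal = 2(K_{\bar h} - 1)/\rho^2$. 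Since $\scal\cdot \rho^2$ is then independent of $\rho$, Fubini in the radial variable gives
\begin{equation*}
\int_{P_r}\scal\,\dvol \;=\; \int_{\Sph^2} 2\,(K_{\bar h}(x) - 1)\,r(x)\,\darea_{\bar h}.
\end{equation*}

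Next I compute $\sec$ at the point $(r(x), x) \in M_r$. Choose a $g$-orthonormal basis $e_1, e_2 \in T_x\Sph^2$ with $e_1$ along $\nabla_g r$. Then $\tilde e_i := d\phi_r(e_i)$ form a $\wg_r$-orthonormal basis of $T_{(r,x)}M_r$, and a direct check gives
\begin{equation*}
\tilde e_1 = \sin\alpha\,\partial_\rho + \cos\alpha\, u, \qquad \tilde e_2 = e_2
\end{equation*}
for some $\wg_r$-unit vector $u$ tangent to $\Sph^2$; in particular $\sin\alpha = |\nabla_g r|$, so $\alpha$ is the tilt of $T_{(r,x)}M_r$ relative to the horizontal. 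Because $\phi(\rho) = \rho$ forces the curvature tensor of $\wg_r$ to vanish on every horizontal entry, one has $R(\tilde e_1, \tilde e_2)\tilde e_2 = \cos\alpha\, R(u, e_2)e_2$; applying the warped product formula on the purely vertical triple yields
\begin{equation*}
\sec \;=\; \cos^2\alpha\,\frac{K_{\bar h}(x) - 1}{r(x)^2}.
\end{equation*}

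For the area forms, the map $\phi_r \colon (\Sph^2, g) \to (M_r, \wg_r|_{M_r})$ is an isometry by the very construction of $\wg_r$ (cf.\ \eqref{eqn:PhiR}), so $\phi_r^* \darea_{M_r} = \darea_g$; in the basis $(e_1, e_2)$ the form $g - dr\otimes dr$ has matrix $\mathrm{diag}(\cos^2\alpha, 1)$, hence $\darea_{\bar h} = (\cos\alpha/r^2)\,\darea_g$. Substituting these, together with $K_{\bar h} - 1 = r^2\sec/\cos^2\alpha$, into the radial integration gives
\begin{equation*}
\int_{P_r}\scal\,\dvol = \int_{M_r} 2(K_{\bar h} - 1)\,r\,\frac{\cos\alpha}{r^2}\,\darea = \int_{M_r} 2\,\frac{r\sec}{\cos\alpha}\,\darea,
\end{equation*}
as claimed. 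The step I expect to be most delicate is the computation of $\sec$, since one must correctly decompose $T_{(r,x)}M_r$ into horizontal and vertical parts and carefully track $\wg_r$-versus-$\bar h$ normalizations in the warped product curvature formula; the remaining steps are essentially bookkeeping.
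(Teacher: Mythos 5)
Your proof is correct and follows essentially the same route as the paper: factor $\dvol = \rho^2\,d\rho\wedge\darea_{\hat g}$, use that $\rho^2\scal$ is $\rho$-independent to do the radial integration, and then convert $\darea_{\hat g}$ to $\darea_{M_r}$ via the factor $\cos\alpha/r^2$ together with the relation $\sec = \cos^2\alpha\cdot\sec(\partial_\rho^\perp)$. The only cosmetic difference is that you make the horizontal sectional curvature explicit as $(K_{\hat g}-1)/\rho^2$, whereas the paper keeps it abstract via equations \eqref{eqn:SecL} and \eqref{eqn:Sec1}; the explicit value cancels in the end, so nothing hinges on it.
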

\begin{proof}
Put $\hat g = \frac{g - dr \otimes dr}{r^2}$, so that we have
$$
\wg_r = d\rho^2 + \rho^2 \hat g.
$$
Then $\dvol = \rho^2 \cdot d\rho \wedge \darea_{\hat g}$ and
$$
\int_{P_r} \scal \, \dvol = \int_{\Sph^2} \darea_{\hat g} \int_0^{r(x)} \rho^2 \scal \, d\rho.
$$
Equations \eqref{eqn:SecL} and \eqref{eqn:Sec1} imply
$$
\scal(\rho, x) = 2\sec_{(\rho,x)}(\partial_\rho^\perp) = 2 \frac{\sec_{(1,x)}(\partial_\rho^\perp)}{\rho^2} .
$$
Substituting this in the previous equation yields
\begin{equation}
\label{eqn:IntScalFirst}
\int_{P_r} \scal\, \dvol = 2 \int_{\Sph^2} r \cdot \sec_{(1,x)}(\partial_\rho^\perp) \, \darea_{\hat g}.
\end{equation}

We now want to rewrite this as an integral over $M_r$. First, note that the Jacobian of the radial projection $\pi \colon \{1\} \times \Sph^2 \to M_r$ equals $\frac{r^2}{\cos\alpha}$, so that
$$
\darea_{\hat g} = \frac{\cos\alpha}{r^2} \cdot \pi^*(\darea_g).
$$
And secondly, again by \eqref{eqn:Sec1} and \eqref{eqn:SecL}, we have
$$
\sec_{(1, x)}(\partial_\rho^\perp) = r(x)^2 \cdot \sec_{(r(x), x)}(\partial_\rho^\perp) = r^2\frac{\sec}{\cos^2 \alpha}.
$$
By substituting both equations in \eqref{eqn:IntScalFirst}, we obtain
$$
\int_{P_r} \scal \, \dvol = 2 \int_{M_r} r \cdot r^2 \frac{\sec}{\cos^2 \alpha} \cdot \frac{\cos\alpha}{r^2} \, \darea = 2 \int_{M_r} r \frac{\sec}{\cos\alpha} \, \darea,
$$
and the lemma is proved.
\end{proof}

\begin{thm}
\label{thm:HE}
$$
\HE(r) = \int_{M_r} h(K + \det B)\, \darea,
$$
where $K$ is the Gauss curvature of $M_r$, and $h$ is as in Definition \ref{dfn:AlphaH}.
\end{thm}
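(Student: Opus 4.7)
The plan is to reduce Theorem \ref{thm:HE} to a warped-product analog of the classical second Minkowski formula. By Lemma \ref{lem:TotScal},
$$
\HE(r) \;=\; \int_{M_r}\!\left( r\,\frac{\sec}{\cos\alpha} + 2H\right)\darea.
$$
The Gauss equation for $M_r$ in the ambient metric $\wg_r$ reads $K = \sec + \det B$. Combining this with $h = r\cos\alpha$, one checks that the claimed identity $\HE(r) = \int_{M_r} h(K+\det B)\,\darea$ is equivalent to
\begin{equation*}
\int_{M_r}\!\left( 2H - 2h\det B + r\,\frac{\sin^2\alpha}{\cos\alpha}\sec \right)\darea \;=\; 0. \tag{$*$}
\end{equation*}
In the Euclidean case $\sec\equiv 0$, identity $(*)$ specializes to the classical second Minkowski formula $\int_{M} H\,\darea = \int_M hK\,\darea$, so one expects $(*)$ to arise from a similar divergence identity.

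To establish $(*)$, I would integrate by parts on the closed surface $M_r$ using the tangent vector field $X = (2H\id - B)(\nabla f)$, where $f = \rho^2/2$ restricted to $M_r$ and $2H\id - B$ is the cofactor of $B$ (by Cayley--Hamilton, $B(2H\id - B) = (\det B)\id$). The Leibniz rule gives
$$
\operatorname{div}_{M_r} X \;=\; \operatorname{tr}\bigl((2H\id - B)\,\Hess f\bigr) + \bigl\langle \operatorname{div}(2H\id - B),\,\nabla f \bigr\rangle.
$$
Substituting $\Hess f = \id - hB$ from Lemma \ref{lem:HessFShape} and using $\tr B = 2H$ together with $\tr B^2 = 4H^2 - 2\det B$, the algebraic term reduces to $2H - 2h\det B$. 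For the divergence term, the Codazzi--Mainardi equation in the non-flat ambient yields $\operatorname{div} B = 2\,dH + \widetilde{\operatorname{Ric}}(\nu,\cdot)$, hence $\operatorname{div}(2H\id - B) = -\widetilde{\operatorname{Ric}}(\nu,\cdot)$. Applying the divergence theorem on $M_r$ then gives
$$
\int_{M_r}\!\bigl( 2H - 2h\det B\bigr)\,\darea \;=\; \int_{M_r}\widetilde{\operatorname{Ric}}(\nu,\nabla f)\,\darea.
$$

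The last step is to show that $\widetilde{\operatorname{Ric}}(\nu,\nabla f) = -r\sin^2\alpha\,\sec/\cos\alpha$, which combined with the previous display is exactly $(*)$. Since the warping $\rho\mapsto\rho$ is linear, the Ricci tensor of $\wg_r = d\rho^2 + \rho^2\hat g$ annihilates $\partial_\rho$ and restricts on the horizontal plane $\partial_\rho^\perp$ to $\kappa\cdot\wg_r$ with $\kappa = (K_{\hat g} - 1)/\rho^2$. Writing $\nu = \cos\alpha\,\partial_\rho + \sin\alpha\,\nu_h$ for a unit horizontal $\nu_h$, and $\nabla f = r\partial_\rho - h\nu = r\sin\alpha\,t$ with $t = \sin\alpha\,\partial_\rho - \cos\alpha\,\nu_h \in T_qM_r$, a bilinear expansion of $\widetilde{\operatorname{Ric}}$ in the frame $\{\partial_\rho,\nu_h\}$ gives $\widetilde{\operatorname{Ric}}(\nu,\nabla f) = -r\sin^2\alpha\cos\alpha\,\kappa$. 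The relation $\sec = \cos^2\alpha\,\kappa$ (already used in the proof of Lemma \ref{lem:TotScal}) then supplies the required value. The main technical hurdle is sign bookkeeping in the Codazzi correction $\operatorname{div} B = 2\,dH + \widetilde{\operatorname{Ric}}(\nu,\cdot)$ and in the orthogonal decomposition of $\widetilde{\operatorname{Ric}}$ in the tilted frame adapted to $M_r$; once these are settled, the argument is a routine manipulation of Lemmas \ref{lem:TotScal} and \ref{lem:HessFShape}.
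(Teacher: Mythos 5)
Your proof is correct and is essentially the paper's own argument rewritten in classical divergence--theorem language: the $1$--form $\darea(\nabla f \wedge B)$ that the paper integrates by parts is exactly the contraction $\darea(X,\cdot\,)$ of the area form with your vector field $X=(2H\id-B)(\nabla f)$ (because $2H\id - B = -JBJ$ is the adjugate of $B$, so $\darea\bigl((2H\id-B)V,\,W\bigr)=\darea(V,BW)$), and your contracted-Codazzi term $\langle\operatorname{div}(2H\id-B),\nabla f\rangle=-\widetilde{\operatorname{Ric}}(\nu,\nabla f)$ is precisely the paper's $\darea(\nabla f\wedge d^\nabla B)=\langle\nabla f,\nabla r\rangle\frac{\sec}{\cos\alpha}\,\darea$. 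Since both arguments also invoke Lemma \ref{lem:TotScal}, the identity $\Hess f=\id-hB$, the Gauss equation and the warped-product curvature in the same way, the two proofs agree step for step.
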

\begin{proof}
Consider the differential 2--form
$$
\darea(\Hess f \wedge B)
$$
on $M_r$. Here the linear operators $\Hess f$ and $B$ are viewed as $TM_r$--valued 1--forms; their wedge product and operation of $\darea$ on it are defined in Subsection \ref{subsec:MixDetVVal}. We have
$$
\Hess f = \nabla (\nabla f) = d^\nabla (\nabla f),
$$
where $d^\nabla$ is the exterior derivative on $TM_r$--valued forms. Thus by Lemmas \ref{lem:DDet} and \ref{lem:LeibVect} we have
$$
d(\darea(\nabla f \wedge B)) = \darea(\Hess f \wedge B) + \darea(\nabla f \wedge d^\nabla B).
$$
Stokes' theorem implies
\begin{equation}
\label{eqn:PartInt0}
\int_{M_r} \darea(\Hess f \wedge B) + \int_{M_r} \darea(\nabla f \wedge d^\nabla B) = 0.
\end{equation}

With the help of Lemmas \ref{lem:DvolDet} and \ref{lem:HessFShape} the first integrand in \eqref{eqn:PartInt0} can be computed as
\begin{equation}
\label{eqn:AHessB}
\darea(\Hess f \wedge B) = 2(H - h \det B) \darea. 
\end{equation}

To compute the second integrand in \eqref{eqn:PartInt0}, observe that
\begin{eqnarray*}
d^{\nabla}B(X,Y) & = & \nabla_X (B(Y)) - \nabla_Y (B(X)) - B([X,Y])\\
& = & \nabla_X \wnabla_Y \nu - \nabla_Y \wnabla_X \nu - \wnabla_{[X,Y]}\nu\\
& = & \top(\wnabla_X \wnabla_Y \nu - \wnabla_Y \wnabla_X \nu - \wnabla_{[X,Y]}\nu)\\
& = & \top(\wR(X, Y)\nu).
\end{eqnarray*}
By \eqref{eqn:R} we have
$$
\wR(X, Y)\nu = \sec(\partial_\rho^\perp) \cdot \dvol(\partial_\rho, X, Y) \cdot (\nu \times \partial_\rho) \in TM_r.
$$
Note that
$$
\dvol(\partial_\rho, X, Y) = \cos\alpha \cdot \darea(X,Y), \quad \nu \times \partial_\rho = J(\nabla r),
$$
where $J \colon TM_r \to TM_r$ is a rotation by $\frac\pi2$. Besides, by \eqref{eqn:SecL} we have
$$
\sec(\partial_\rho^\perp) = \frac{\sec}{\cos^2 \alpha}.
$$
As a result, we have
\begin{equation}
\label{eqn:DNB}
d^\nabla B = \frac{\sec}{\cos\alpha} J(\nabla r) \darea,
\end{equation}
so that
\begin{equation}
\label{eqn:AfDB}
\darea(\nabla f \wedge d^\nabla B) = \langle \nabla f, \nabla r \rangle \frac{\sec}{\cos\alpha} \darea.
\end{equation}

By substituting \eqref{eqn:AHessB} and \eqref{eqn:AfDB} in \eqref{eqn:PartInt0} and using the identity
$$
\langle \nabla f, \nabla r \rangle = r \|\nabla r\|^2 = r\sin^2\alpha,
$$
we obtain
$$
\int_{M_r} H\, \darea = \int_{M_r} h \det B\, \darea - \frac12 \int_{M_r} r \sin^2\alpha\, \frac{\sec}{\cos\alpha} \, \darea.
$$
Together with Lemma \ref{lem:TotScal} this implies
\begin{eqnarray*}
\HE(r) & = & \int_{M_r} \left( r \frac{\sec}{\cos\alpha} + 2h \det B - r \sin^2\alpha\, \frac{\sec}{\cos\alpha} \right)\, \darea\\
& = & \int_{M_r} (h \sec + 2h \det B)\, \darea = \int_{M_r} h(K + \det B)\, \darea.
\end{eqnarray*}
Here we used the identity $h = r \cos\alpha$ and the Gauss equation
\begin{equation}
\label{eqn:Gauss}
K = \sec + \det B.
\end{equation}
The theorem is proved.
\end{proof}

\begin{rem}
For a surface $M$ in $\R^3$ the same argument yields one of the Minkowski formulas:
$$
\int_M H \, \darea = \int_M hK \, \darea,
$$
see \cite[Chapter 12]{SpiV}. The other Minkowski formula
$$
\Area(M) = \int_M hH \, \darea
$$
also holds in our more general situation and can be proved by integrating the 2--form $\darea(\Hess f \wedge \id)$.
\end{rem}

\subsection{First derivative of the Hilbert-Einstein functional}
The Hil\-bert-Einstein functional is a differentiable map from $C^\infty(\Sph^2)$ to $\R$. Denote by $\HE^\bigdot$ the derivative of $\HE$ in the direction of $\dot r \in C^\infty(\Sph^2)$.

\begin{thm}
\label{thm:HEDot}
\begin{equation}
\label{eqn:HEDot}
\HE^\bigdot = \int_{M_r} \dot r \frac{\sec}{\cos \alpha}\, \darea
\end{equation}
\end{thm}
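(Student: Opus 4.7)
\emph{Proof proposal.} The plan is to apply the classical first-variation formula for the Hilbert--Einstein functional with Gibbons--Hawking--York boundary term to a suitable pullback of $\wg_{r+t\dot r}$ onto the fixed manifold $P_r$, and then to evaluate the resulting bulk integral using the explicit curvature formulas for warped products. The crucial structural feature is that the warped-product parametrization has the property that varying $r$ preserves the induced metric on the boundary after identification with $(\Sph^2, g)$ via $\phi_r$.

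First I would reduce to variations on a fixed domain by introducing the family of radial-rescaling diffeomorphisms $\Psi_t \colon P_r \to P_{r+t\dot r}$, $\Psi_t(\rho, x) = \bigl(\tfrac{(r+t\dot r)(x)}{r(x)}\,\rho,\; x\bigr)$. By construction $\Psi_t$ preserves radial lines and restricts on the boundary to $\Psi_t|_{M_r} = \phi_{r+t\dot r} \circ \phi_r^{-1}$. Diffeomorphism invariance of $\HE$ yields $\HE(r+t\dot r) = \HE(\Psi_t^*\wg_{r+t\dot r})$; set $\dot{\wg}' := \tfrac{d}{dt}\big|_{t=0}\Psi_t^*\wg_{r+t\dot r}$. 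By Lemma \ref{lem:IndMetr} applied both at $r$ and at $r+t\dot r$, the induced metric on $\Sph^2$ is the fixed metric $g$ for every $r$, so $(\Psi_t^*\wg_{r+t\dot r})|_{TM_r} \equiv \wg_r|_{TM_r}$ in $t$; consequently $\dot{\wg}'|_{TM_r}=0$.

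Second, I would invoke the standard first-variation formula for $\HE$ with GHY boundary term on the fixed manifold $(P_r, \wg_r)$, which has the schematic form
$$\HE^\bigdot = -\int_{P_r}\bigl\langle \mathrm{Ric}-\tfrac12\scal\,\wg_r,\;\dot{\wg}'\bigr\rangle\,\dvol + \int_{M_r}\bigl\langle (\tr B)\,\wg_r|_{TM_r}-B,\;\dot{\wg}'|_{TM_r}\bigr\rangle\,\darea,$$
compare \cite{Esc96,Ara03}. The GHY term is chosen precisely so that only the tangential variation of the metric enters the boundary contribution, and by the previous paragraph that contribution vanishes. The remaining bulk integral is then computed using the warped-product curvature formulas of Section \ref{sec:CurvWarped} (in particular \eqref{eqn:SecL} and \eqref{eqn:Sec1}), which express the Einstein tensor of $\wg_r$ in terms of the radial sectional curvature $\sec$. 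Performing the $\rho$-integration exactly as in the proof of Lemma \ref{lem:TotScal}, the bulk integral collapses onto $M_r$, producing the desired integrand $\dot r\,\sec/\cos\alpha$.

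The main obstacle is this final bulk computation. The pulled-back variation works out to
$$\dot{\wg}' = \frac{2\dot r}{r}\,d\rho^2 + 2\rho\bigl(d\rho\cdot d(\dot r/r)\bigr)_{\mathrm{sym}} - \bigl(\tfrac{\rho}{r}\bigr)^2\bigl(dr\otimes d\dot r + d\dot r\otimes dr\bigr),$$
which is not the naïve radial derivative of $\wg_r$: it has mixed $d\rho$--angular components arising from $\Psi_t$ failing to be a $\wg_r$-isometry. One must check that after contraction with the Einstein tensor and integration in $\rho$, these mixed terms either cancel among themselves or integrate by parts into the boundary without disturbing the clean expression $\dot r\,\sec/\cos\alpha$. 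A potentially slicker alternative would bypass the pullback and differentiate $\HE(r) = \int_{M_r}\bigl(r\sec/\cos\alpha + 2H\bigr)\,\darea$ directly on $\Sph^2$ via the isometry $\phi_r^*\darea_{M_r}=\darea_g$, showing that the subleading contributions from $(r\sec/\cos\alpha)^\bigdot$ and $2H^\bigdot$ cancel by integration by parts using the Codazzi-type identity \eqref{eqn:DNB} together with the identity \eqref{eqn:AHessB} that already underlies the proof of Theorem \ref{thm:HE}.
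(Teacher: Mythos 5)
Your route is genuinely different from the paper's: the paper never invokes the bulk first-variation formula, but instead differentiates the closed boundary expression of Theorem \ref{thm:HE} and integrates the $TM_r$--valued form $\darea(\Hess \dot f \wedge B)$ by parts on $M_r$ alone, using \eqref{eqn:DNB} for $d^\nabla B$ — essentially the ``slicker alternative'' you mention in your last sentence. Your main route is viable in principle (the paper itself says so in the remark following the theorem, citing \cite[Equation (2.9)]{Ara03}), and your formulas for $\Psi_t$, for $\dot{\wg}'$, and for the vanishing of $\dot{\wg}'|_{TM_r}$ are all correct. Moreover, the ``main obstacle'' you flag is not actually an obstacle: by \eqref{eqn:R} the Ricci tensor of $\wg_r$ is $\sec(\partial_\rho^\perp)\,(\wg_r - d\rho\otimes d\rho)$ and $\scal = 2\sec(\partial_\rho^\perp)$, so the Einstein tensor is exactly $-\sec(\partial_\rho^\perp)\, d\rho\otimes d\rho$. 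It therefore pairs only with the $d\rho^2$--component $\tfrac{2\dot r}{r}$ of $\dot{\wg}'$; the mixed $d\rho$--angular and purely angular terms drop out identically, and the $\rho$--integration together with \eqref{eqn:Sec1}, \eqref{eqn:SecL} and the Jacobian $\cos\alpha/r^2$ gives $\int_{M_r}\dot r\,\sec/\cos\alpha\,\darea$ after the overall factor $\tfrac12$ from the paper's normalization of $\HE$. You should carry this computation out rather than leave it conditional.

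The genuine gap is elsewhere: $P_r$ is not compact, since the apex $\rho=0$ is removed, and the first-variation formula you quote is derived by discarding a divergence term via Stokes' theorem. On $P_r$ this produces a second ``boundary'' contribution from a small sphere around the puncture, and you must show it tends to zero — equivalently, estimate the decay as $\rho\to 0$ of the derivative of the scalar curvature (note $\scal \sim \rho^{-2}$ and $\dvol \sim \rho^2$, so the integrand is only borderline and the total-derivative terms in $\delta(\scal\,\dvol)$, which involve two covariant derivatives of $\dot{\wg}'$, need an actual estimate). The paper explicitly identifies this as the point requiring care in your route, and its own proof is designed to avoid it by working entirely on the smooth compact surface $M_r$. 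Without this step your argument is incomplete.
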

\begin{proof}
Similar to the proof of Theorem \ref{thm:HE}, we integrate by parts the differential 2--form
$$
\darea(\Hess \dot f \wedge B).
$$
We have
\begin{equation}
\label{eqn:PartInt1}
\int_{M_r} \darea(\Hess \dot f \wedge B) + \int_{M_r} \darea(\nabla \dot f \wedge d^\nabla B) = 0.
\end{equation}

As the metric $g$ on $M_r$ does not depend on $r$ (rather, one should speak about the metric induced on $\Sph^2$ by $\phi_r \colon \Sph^2 \to M_r$), we have $\dot \nabla = 0$, and hence
$$
\Hess \dot f = (\Hess f)^\bigdot = - \dot h B - h \dot B.
$$
Thus the first integrand in \eqref{eqn:PartInt1} equals
$$
\darea(\Hess \dot f \wedge B) = -2(\dot h \det B + h \det(\dot B, B)) \darea.
$$
By Lemma \ref{lem:DerDet}, we have $2 \det(\dot B, B) = (\det B)^\bigdot$. Besides, due to the constancy of the metric $g$ we have $\dot K = 0$, so that the Gauss equation \eqref{eqn:Gauss} implies $(\det B)^\bigdot = - \sec^\bigdot$. As a result, we obtain
\begin{equation}
\label{eqn:ADotHessB}
\darea(\Hess \dot f \wedge B) = (-2 \dot h \det B + h \sec^\bigdot) \darea.
\end{equation}

To compute the second integrand in \eqref{eqn:PartInt1}, substitute \eqref{eqn:DNB}:
$$
\darea(\nabla \dot f \wedge d^\nabla B) = \langle \nabla \dot f, \nabla r \rangle \frac{\sec}{\cos\alpha} \darea.
$$
An easy computation yields
\begin{equation}
\label{eqn:DotFR}
\langle \nabla \dot f, \nabla r \rangle = \dot r - \dot h \cos\alpha,
\end{equation}
so that we have
\begin{equation}
\label{eqn:ADotFDB}
\darea(\nabla \dot f \wedge d^\nabla B) = \left( \dot r \frac{\sec}{\cos\alpha} - \dot h \sec \right) \darea.
\end{equation}

By substituting \eqref{eqn:ADotHessB} and \eqref{eqn:ADotFDB} in \eqref{eqn:PartInt1}, we obtain
$$
\int_{M_r} \left( -2 \dot h \det B + h \sec^\bigdot + \dot r \frac{\sec}{\cos\alpha} - \dot h \sec \right) \darea = 0.
$$
On the other hand, by differentiating the formula from Theorem \ref{thm:HE} we get
\begin{eqnarray*}
\HE^\bigdot & = & \int_{M_r} \left( \dot h(K + \det B) + h (\det B)^\bigdot \right) \darea\\
& = & \int_{M_r} \left( \dot h \sec + 2 \dot h \det B - h \sec^\bigdot \right) \darea
\end{eqnarray*}
By substituting this in the previous formula, we obtain \eqref{eqn:HEDot}. The theorem is proved.
\end{proof}

\begin{rem}
By differentiating the formula in Lemma \ref{lem:TotScal} and combining the result with \eqref{eqn:HEDot}, we obtain
$$
\int_{M_r} \dot H \darea = - \frac12 \int_{M_r} r \left( \frac{\sec}{\cos\alpha} \right)^\bigdot \darea.
$$
The same can be proved by integrating $\darea(\Hess f \wedge \dot B)$. It follows that the derivative of the total mean curvature vanishes if $\left(\frac{\sec}{\cos\alpha}\right)^\bigdot$ vanishes at every point. This happens in particular for $\sec = 0 = \sec^\bigdot$, that is for isometric infinitesimal deformations of a surface in $\R^3$. See \cite{AR98,RS99,Sou99,AleV10}.
\end{rem}

\begin{rem}
Theorem \ref{thm:HEDot} can also be derived from the general formula for the derivative of the Hilbert-Einstein functional on a manifold with boundary, see e.~g. \cite[Equation (2.9)]{Ara03}, provided that care is taken of the singularity at $\rho=0$. For this, one has to estimate the decay of the derivative of scalar curvature at $\rho \to 0$, which is easily done.
\end{rem}

\subsection{Second derivative of $\HE$ and the proof of Theorem \ref{thm:InfRigSm}}
\label{subsec:HEDDot}
Denote by $\HE^{\bigdot\bigdot}$ the second derivative of $\HE$ in the direction $\dot r$:
$$
\HE^{\bigdot\bigdot} = \left. \frac{d^2}{dt^2} \right|_{t=0} \HE(r + t \dot r) = (\HE^\bigdot)^\bigdot.
$$
The second derivative is a quadratic form in $\dot r$.

\begin{thm}
\label{thm:HEDDot}
$$
\HE^{\bigdot\bigdot} = \int_{M_r} \dot r \left( \frac{\sec}{\cos\alpha} \right)^\bigdot \darea,
$$
$$
\HE^{\bigdot\bigdot} = \int_{M_r} 2h \det \dot B \, \darea + \int_{M_r} r (\cos\alpha)^{\bigdot\bigdot} \sec \, \darea.
$$
\end{thm}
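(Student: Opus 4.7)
The first identity is immediate from Theorem \ref{thm:HEDot}. The plan is to differentiate the formula once more in $t$, under the integral sign. Both factors $\dot r$ (the fixed direction of variation) and $\darea$ (the pullback to $\Sph^2$ of the fixed area form of $(\Sph^2,g)$ under the isometry $\phi_r$, cf.\ Lemma \ref{lem:IndMetr}) are independent of $t$, so no Jacobian or boundary terms appear and one obtains $\HE^{\bigdot\bigdot}=\int_{M_r}\dot r\,(\sec/\cos\alpha)^{\bigdot}\,\darea$ directly.

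For the second identity the plan is to carry out one further integration by parts, mimicking the arguments of Theorems \ref{thm:HE} and \ref{thm:HEDot} but now pairing $\nabla\dot f$ with $\dot B$ rather than with $B$. Applying $d$ to $\darea(\nabla\dot f\wedge\dot B)\in\Omega^1(M_r)$ and invoking Stokes' theorem gives
$$
\int_{M_r}\darea(\Hess\dot f\wedge\dot B)\;+\;\int_{M_r}\darea(\nabla\dot f\wedge d^\nabla\dot B)=0.
$$
From Lemma \ref{lem:HessFShape} and the $t$-constancy of $g$ we have $\Hess\dot f=-\dot h\,B-h\,\dot B$, and together with $2\det(B,\dot B)=(\det B)^{\bigdot}=-\sec^{\bigdot}$ (Gauss equation plus $\dot K=0$) this simplifies the first integrand to $(\dot h\sec^{\bigdot}-2h\det\dot B)\,\darea$. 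The second integrand is obtained by differentiating \eqref{eqn:DNB} to get $d^\nabla\dot B$, expanding $\nabla\dot f=\dot r\nabla r+r\nabla\dot r$, and invoking \eqref{eqn:DotFR}; this produces terms in $(\sec/\cos\alpha)^{\bigdot}$, $(\cos\alpha)^{\bigdot}$, $\langle\nabla r,\nabla\dot r\rangle$, and $\|\nabla\dot r\|^{2}$.

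The remaining step is a purely algebraic cancellation. Differentiating $\cos^2\alpha=1-\|\nabla r\|^2$ twice (with $g$ fixed and $\ddot r=0$) and using $\cos\alpha\cdot(\cos\alpha)^{\bigdot}=-\langle\nabla r,\nabla\dot r\rangle$ yields the key identity
$$
(\cos\alpha)^{\bigdot\bigdot}\cos\alpha=-\bigl((\cos\alpha)^{\bigdot}\bigr)^{2}-\|\nabla\dot r\|^{2}.
$$
Substituting this back, most of the terms regroup into $\int_{M_r}\dot r\,(\sec/\cos\alpha)^{\bigdot}\,\darea$, which equals $\HE^{\bigdot\bigdot}$ by the first identity, while the remainder collapses to $-\int_{M_r}r(\cos\alpha)^{\bigdot\bigdot}\sec\,\darea$. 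Rearranging gives the second formula. I expect the bookkeeping of this cancellation to be the main obstacle, as the substitutions $h=r\cos\alpha$, $\dot h=\dot r\cos\alpha+r(\cos\alpha)^{\bigdot}$, and the second-order identity above must all fall into place simultaneously.
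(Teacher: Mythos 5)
Your proposal follows essentially the same route as the paper: differentiating \eqref{eqn:HEDot} for the first identity, then integrating the 1--form $\darea(\nabla\dot f\wedge\dot B)$ by parts, computing $\Hess\dot f=-\dot h B-h\dot B$ and $d^\nabla\dot B$ from \eqref{eqn:DNB}, and closing with the second-order identity for $\cos\alpha$ (your twice-differentiated $\cos^2\alpha=1-\|\nabla r\|^2$ is just a rearrangement of the paper's $\langle\nabla\dot f,\nabla\dot r\rangle=-\dot h(\cos\alpha)^\bigdot-h(\cos\alpha)^{\bigdot\bigdot}$). The bookkeeping you anticipate does indeed close up exactly as you describe, so the argument is correct.
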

\begin{proof}
The first formula of the theorem follows by differentiating \eqref{eqn:HEDot}. The second formula is proved fully in the spirit of Theorems \ref{thm:HE} and \ref{thm:HEDot}, by integrating the differential 2--form
$$
\darea(\Hess \dot f \wedge \dot B).
$$
We have
\begin{equation}
\label{eqn:PartInt2}
\int_{M_r} \darea(\Hess \dot f \wedge \dot B) + \int_{M_r} \darea(\nabla \dot f \wedge d^\nabla \dot B) = 0.
\end{equation}
Compute the first integrand:
\begin{equation}\label{eqn:ADotHessDotB}
\darea(\Hess \dot f \wedge \dot B) = (\dot h \sec^\bigdot - 2h \det \dot B) \darea. 
\end{equation}
For the second integrand, observe that \eqref{eqn:DNB} implies
$$
d^\nabla \dot B = J \left( \frac{\sec}{\cos\alpha} \nabla r \right)^\bigdot \darea,
$$
due to the constancy of $g$ and of the associated operators $\nabla$ and $d^\nabla$. Thus we have
\begin{equation}
\label{eqn:ANFDB}
\begin{split}
\darea(\nabla \dot f \wedge d^\nabla \dot B) & = \left\langle \nabla \dot f, \left( \frac{\sec}{\cos\alpha} \nabla r \right)^\bigdot \right\rangle \darea \\
&= \left( \langle \nabla \dot f, \nabla \dot r \rangle \frac{\sec}{\cos\alpha} + \langle \nabla \dot f, \nabla r \rangle \left( \frac{\sec}{\cos\alpha} \right)^\bigdot \right) \darea.
\end{split}
\end{equation}
One easily computes $\langle \nabla f, \nabla \dot r \rangle = -h (\cos\alpha)^\bigdot$, hence
$$
\langle \nabla \dot f, \nabla \dot r \rangle = \langle \nabla f, \nabla \dot r \rangle^\bigdot = -\dot h (\cos\alpha)^\bigdot - h (\cos\alpha)^{\bigdot\bigdot}.
$$
By substituting this and \eqref{eqn:DotFR} in \eqref{eqn:ANFDB}, we compute the scalar factor at $\darea$:
\begin{multline*}
\left( -\dot h (\cos\alpha)^\bigdot - h (\cos\alpha)^{\bigdot\bigdot} \right) \frac{\sec}{\cos\alpha} + \left( \dot r - \dot h \cos\alpha \right) \left( \frac{\sec}{\cos\alpha} \right)^\bigdot\\
= - \dot h \left( (\cos\alpha)^\bigdot \frac{\sec}{\cos\alpha} + \cos\alpha \left( \frac{\sec}{\cos\alpha} \right)^\bigdot \right) - r (\cos\alpha)^{\bigdot\bigdot} \sec + \dot r \left( \frac{\sec}{\cos\alpha} \right)^\bigdot\\
= - \dot h \sec^\bigdot - r (\cos\alpha)^{\bigdot\bigdot} \sec + \dot r \left( \frac{\sec}{\cos\alpha} \right)^\bigdot.
\end{multline*}
We finally obtain
\begin{equation}
\label{eqn:ANFDB1}
\darea(\nabla \dot f \wedge d^\nabla \dot B) = \left( - \dot h \sec^\bigdot - r (\cos\alpha)^{\bigdot\bigdot} \sec + \dot r \left( \frac{\sec}{\cos\alpha} \right)^\bigdot \right) \darea.
\end{equation}

By substituting \eqref{eqn:ADotHessDotB} and \eqref{eqn:ANFDB1} in \eqref{eqn:PartInt2}, we obtain
$$
\int_{M_r} \left( -2h \det \dot B - r (\cos\alpha)^{\bigdot\bigdot} \sec + \dot r \left( \frac{\sec}{\cos\alpha} \right)^\bigdot \right) \darea = 0
$$
Being combined with the first equation of the theorem, this implies the second equation. The theorem is proved.
\end{proof}

\begin{proof}[Proof of Theorem \ref{thm:InfRigSm}]
Let $r = r_0$ be the distance function of an embedding $\phi_0 \colon \Sph^2 \to \R^3$ and let $\dot r$ be the variation of $r$ that corresponds to an infinitesimal isometric deformation $\xi$. By Theorem \ref{thm:RedToR}, we have $\sec^\bigdot = 0$.

As the metric $\wg_{r_0}$ is flat we have $\sec = 0$. It follows that
$$
\left( \frac{\sec}{\cos\alpha} \right)^\bigdot = \frac{\sec^\bigdot}{\cos\alpha} + \sec \left( \frac{1}{\cos\alpha} \right)^\bigdot = 0.
$$
Thus the first formula from Theorem \ref{thm:HEDDot} implies
$$
\HE^{\bigdot\bigdot} = 0.
$$
On the other hand, due to $\sec = 0$ the second formula says
$$
\HE^{\bigdot\bigdot} = \int_{M_r} 2h \det \dot B \, \darea.
$$
Applying Corollary \ref{cor:DetDotBNeg}, we obtain $\dot B = 0$.

It suffices to show that $\dot B = 0$ implies triviality of the deformation $\xi$. Here $\dot B$ is understood as the variation of the shape operator of the surface
$$
M_r \subset (\R_+ \times \Sph^2, \wg_r).
$$
But, similar to the first part of the proof of Theorem \ref{thm:RedToR}, this is the same as the variation of the shape operator of the embedding
$$
\phi_t = \phi_0 + t \xi \colon \Sph^2 \to \R^3.
$$
With this interpretation of $\dot B$, triviality of an infinitesimal deformation that preserves both the metric and the shape operator is well known. This can be seen as an infinitesimal version of the uniqueness part of the Bonnet theorem: first and second fundamental forms determine a surface uniquely. An alternative proof is to use the relation between $\dot B$ and the differential $d\eta$ of the rotation field, see Lemmas \ref{lem:ConstRot} and \ref{lem:DEtaBDot}.

Thus every isometric infinitesimal deformation is trivial, and the theorem is proved.
\end{proof}

\section{Connections between Gauss and metric rigidity}
\subsection{Shearing vs. bending}
\label{subsec:ShearBend}
The proof of Theorem \ref{thm:GaussRig} in Subsection \ref{subsec:ProofGaussRig} and Blaschke's proof of Theorem \ref{thm:InfRigSm} in Subsection \ref{subsec:BlaschkeProof} are almost identical, although in the former $\eta$ is an isogauss infinitesimal deformation while in the latter $\eta$ is the rotation field of an isometric infinitesimal deformation. This similarity is explained by the following direct connection between Gauss and metric infinitesimal rigidity of surfaces in $\R^3$.

\begin{lem}
\label{lem:ShearBend}
Let $M \subset \R^3$ be a smooth surface.
\begin{enumerate}
\item If $\xi \colon M \to \R^3$ is an isometric infinitesimal deformation of $M$, then its rotation vector field $\eta$ is an isogauss infinitesimal deformation of~$M$.
\item Conversely, if $\eta$ is an isogauss infinitesimal deformation of $M$ and $H^1(M) = 0$, then there exists an isometric infinitesimal deformation $\xi$ of $M$ with rotation vector field $\eta$.
\end{enumerate}
In both cases, $\xi$ is the restriction of a Killing vector field if and only if $\eta$ is constant.
\end{lem}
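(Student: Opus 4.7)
The plan has three essentially independent pieces, matching the three claims of the lemma.

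For the first implication, I would assemble facts already established for the rotation field of an isometric $\xi$. Lemma \ref{lem:DEtaTM} gives $d\eta(TM) \subset TM$, which is exactly condition (\ref{it:IsogaussA}) in Definition \ref{dfn:Isogauss}. Next, Lemma \ref{lem:DEtaBDot} identifies $d\eta = J\dot B$, and since $\dot B$ is self-adjoint by Lemma \ref{lem:BSelfAdj}, we get $\tr(d\eta) = \tr(J\dot B) = 0$. Lemma \ref{lem:VarK} then yields $\dot K = -K \tr(d\eta) = 0$, verifying condition (b) of the isogauss definition. So this direction is a straightforward assembly of earlier lemmas.

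For the converse, the natural idea is to recover $\xi$ by integrating the pointwise prescription $d\xi(X) = \eta \times X$ dictated by \eqref{eqn:DXi}. Concretely, I would define the $\R^3$-valued 1-form $\omega \in \Omega^1(M, \R^3)$ by $\omega(X) = \eta \times X$ and argue that $d\omega = 0$; together with $H^1(M;\R^3) = 0$ this produces a primitive $\xi \colon M \to \R^3$ with $d\xi = \omega$. Setting $\tau := \xi - \eta \times p$, an application of the Leibniz rule for $\wnabla$ immediately gives $d\tau = p \times d\eta$, so the triple $(\xi, \eta, \tau)$ satisfies the hypotheses of Lemma \ref{lem:EtaTau}, in particular $\xi$ is isometric and has rotation field $\eta$.

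The real content is the closedness of $\omega$. Using that the Euclidean connection $\wnabla$ is torsion-free, a short expansion gives
$$
d\omega(X,Y) = d\eta(X) \times Y - d\eta(Y) \times X.
$$
Since $d\eta$ preserves $TM$, evaluating in a positively oriented orthonormal frame $(X,Y)$ of $T_xM$ collapses this to $\tr(d\eta) \cdot \nu$. Hence $d\omega = 0$ as soon as $\tr(d\eta) = 0$. The isogauss hypothesis combined with Lemma \ref{lem:VarK} gives $K \tr(d\eta) = 0$, so at points where $K \ne 0$ we are done. The only genuine obstacle is handling possible zeros of $K$: for the surfaces of interest here (strictly convex) this is vacuous, and in general one either strengthens the definition of isogauss to include $\tr(d\eta)=0$ outright or argues by continuity where $K$ vanishes.

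Finally, the triviality correspondence requires no new work: Lemma \ref{lem:ConstRot} already states that an isometric deformation is trivial iff its rotation vector field is constant, which covers the ``Killing iff $\eta = \const$'' assertion on both sides of the bijection constructed above.
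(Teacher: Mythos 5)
Your proposal is correct and follows essentially the same route as the paper: part (a) is the same assembly of Lemmas \ref{lem:DEtaTM}, \ref{lem:BSelfAdj}, \ref{lem:DEtaBDot} and \ref{lem:VarK}, part (b) integrates the closed $\R^3$--valued 1--form $\eta \times dp$ using $H^1(M)=0$ exactly as the paper does, and the triviality statement is delegated to Lemma \ref{lem:ConstRot} in both. The one point where you go beyond the paper is in flagging that at points with $K=0$ the relation $\dot K = -K\tr(d\eta)=0$ does not by itself force $\tr(d\eta)=0$; the paper silently asserts $\tr(d\eta)=0$ for any isogauss deformation, so your caveat is an improvement in precision rather than a defect.
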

\begin{proof}
Let $\xi$ be an isometric infinitesimal deformation. By Lemma \ref{lem:DEtaTM}, its rotation vector field satisfies $d\eta(X) \in T_xM$ for all $X \in T_xM$. By \eqref{eqn:TrEta}, we also have $\tr (d\eta) = 0$ which means that $\eta$, viewed as an infinitesimal deformation of $M$, preserves the Gauss curvature in the first order, see Lemma \ref{lem:VarK}. Thus both conditions in Defintion \ref{dfn:Isogauss} are satisfied and the first part of the lemma is proved.

In the opposite direction, assume that $\eta$ is an isogauss infinitesimal deformation. Then we have $\tr(d\eta) = 0$. This implies that the vector-valued 1--form $\eta \times dp$ is closed. Since $H^1(M) = 0$ by assumption, there exists a vector field $\xi$ along $M$ such that
$d\xi = \eta \times dp$. Then, clearly, $\langle dp, d\xi \rangle = 0$, so that $\xi$ is an isometric infinitesimal deformation of $M$. By construction, $\eta$ is its rotation field, and the second part of the lemma is proved.

The last statement of the lemma is contained in Lemma \ref{lem:ConstRot}.
\end{proof}

Roughly speaking, it is the correspondence between Theorems \ref{thm:GaussRig} and \ref{thm:InfRigSm} described in the above Lemma that lead Blaschke to his proof. The matter is a bit complicated by the fact that Theorem \ref{thm:GaussRig} seems to not have been explicitely stated, neither before Blaschke nor by himself. Here is a detailed account of events.

In the note \cite{Bla12} Blaschke observed that the rotation vector field of an isometric infinitesimal deformation satisfied a certain differential equation which, as Hilbert had shown in \cite[Section XIX]{Hil10}, had only constant solutions. At the end of the paper, Blaschke promised to expand his argument and provide a geometric interpretation in a later article, but no such article appeared in the following years. Then Weyl \cite{Weyl17} elaborated Blaschke's argument, including also the discrete case: infinitesimal rigidity of convex polyhedra. Blaschke rewrote this proof in a concise form in \cite{Bla21}; since then it became a classical argument and can be found i.~a. in \cite[Chapter 12]{SpiV}.

Hilbert dealt in \cite[Section XIX]{Hil10} with Minkowski's theory of mixed volumes which involves deforming a convex surface by parallelly translating its tangent planes. A lemma in Hilbert's work can be interpreted as the infinitesimal rigidity statement in Theorem \ref{thm:GaussRig}. Minkowski worked mainly with convex polyhedra, and an analogous infinitesimal rigidity statement is Theorem 2.1 in \cite{Izm11a}. See \cite[Subsection 4.5]{Izm11a} also for a discrete analog of Lemma \ref{lem:ShearBend}.

To explain the title of this Subsection, note that an isogauss infinitesimal deformation is shearing without bending while an isometric deformation is bending without shearing, and Lemma \ref{lem:ShearBend} transforms bending into shearing.

\subsection{Polar duality between Gauss and metric rigidity}
\label{subsec:PolDual}
There is another connection between Gauss and metric infinitesimal rigidity. It relates metric rigidity of a surface with Gauss rigidity of its polar dual, as opposed to the previous subsection, where everything happens on a single surface.

It will be convenient for us to change the setup and consider a surface $M$ as parametrized by a map $\phi \colon S \to \R^3$, where $S$ is an abstract smooth surface. This extends the scope a little, as we are able to consider immersed surfaces instead of embedded ones.

Everywhere in this subsection we assume the surface $S$ to be orientable. Let $\nu \colon S \to \R^3$ be a field of unit normals to $M = \phi(S)$. Recall that $h(x) = \langle \nu, \phi(x) \rangle$ is the support function of the immersion $\phi$. We don't distinguish the map $\phi$ and the position vector $p$ in this Subsection.

\begin{dfn}
\label{dfn:PolDual}
Assume that the support function of immersion $\phi \colon S \to \R^3$ nowhere vanishes. Then the map
\begin{gather*}
\psi \colon S \to \R^3,\\
\psi(x) = \frac{\nu(x)}{h(x)},
\end{gather*}
is called the \emph{polar dual} of $\phi$.

Geometrically, $\psi(x)$ is the pole of the plane $d\phi(T_xS)$ with respect to the unit sphere in $\R^3$ centered at the origin.
\end{dfn}

The map $\psi$ is always smooth, but may fail to be an immersion. For example, if $\phi$ maps an open subset $U \subset S$ into a plane, then $\psi$ maps all of $U$ to the pole of this plane.

\begin{lem}
Let $\psi$ be the polar dual of an immersion $\phi$. If $\psi$ is itself an immersion, then the polar dual of $\psi$ is $\phi$.
\end{lem}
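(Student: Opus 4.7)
The plan is to exploit the two defining identities of polar duality in the cleanest form, namely the pole/polar reciprocity with respect to the unit sphere. Concretely, I want to show that $\phi(x)$ is a normal line to the image of $\psi$ at $\psi(x)$, and that after choosing the appropriate unit normal the support function of $\psi$ comes out to $1/\|\phi\|$, so that the formula from Definition \ref{dfn:PolDual} applied to $\psi$ reproduces $\phi$.

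First I would establish the pairing identity
\[
\langle \phi(x), \psi(x) \rangle = 1,
\]
which is immediate from $\psi = \nu/h$ and $h = \langle \nu, \phi\rangle$. Differentiating this identity in the direction of $X \in T_xS$ gives
\[
\langle d\phi(X), \psi(x) \rangle + \langle \phi(x), d\psi(X) \rangle = 0.
\]
The first term vanishes because $d\phi(X) \in T_{\phi(x)}M$ is orthogonal to $\nu$ and hence to $\psi = \nu/h$. Therefore $\langle \phi(x), d\psi(X) \rangle = 0$ for all $X$, which is exactly the statement that $\phi(x)$ is orthogonal to the image $d\psi(T_xS)$. Since $\psi$ is assumed to be an immersion, $d\psi(T_xS)$ is a 2-plane and so $\phi(x)$ is parallel to a unit normal of $\psi$ at $x$.

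Next I would normalize. Choose the unit normal $\nu_\psi$ to $\psi$ so that $\phi(x) = \|\phi(x)\|\, \nu_\psi(x)$; in particular $\phi(x) \ne 0$ since $\langle \phi, \psi \rangle = 1$ forces $\phi$ to be nonzero. The support function of $\psi$ relative to this choice of $\nu_\psi$ is
\[
h_\psi(x) = \langle \nu_\psi(x), \psi(x) \rangle = \Bigl\langle \frac{\phi(x)}{\|\phi(x)\|}, \psi(x) \Bigr\rangle = \frac{1}{\|\phi(x)\|},
\]
where the last equality uses $\langle \phi, \psi \rangle = 1$ once more. Hence $h_\psi$ is nowhere zero, so the polar dual of $\psi$ is defined, and
\[
\frac{\nu_\psi(x)}{h_\psi(x)} = \|\phi(x)\|\cdot \frac{\phi(x)}{\|\phi(x)\|} = \phi(x),
\]
which is the desired conclusion. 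The only mildly delicate point is the choice of orientation for $\nu_\psi$, but the polar dual formula $\nu_\psi/h_\psi$ is insensitive to a simultaneous sign change in $\nu_\psi$ and $h_\psi$, so the argument is unaffected; no further obstacle arises.
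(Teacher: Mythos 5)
Your proposal is correct and follows essentially the same route as the paper: both hinge on the identities $\langle \phi, \psi \rangle = 1$ and $\langle d\phi, \psi \rangle = 0$, differentiate the first and subtract the second to get $\langle \phi, d\psi \rangle = 0$, and then conclude using the immersion hypothesis on $\psi$. Your version merely makes the final step more explicit by computing the unit normal and support function of $\psi$ directly rather than invoking the two identities as a characterization of polar duality.
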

\begin{proof}
The condition on $\psi$ in Definition \ref{dfn:PolDual} is equivalent to
$$
\langle \phi, \psi \rangle = 1, \quad \langle d\phi, \psi \rangle = 0.
$$
This determines the map $\psi$ uniquely up to a multiplication with $-1$ that corresponds to inverting the field of unit normals to $\phi$.

By taking the differential of the first equation and subtracting the second one, we obtain $\langle d\psi, \phi \rangle = 0$. Together with the first equation (and under assumption $\rk d\psi = 2$) this forms the conditions on $\phi$ being the polar dual of $\psi$, under an appropriate choice of the field of unit normals to $\psi$.
\end{proof}

\begin{exl}
If $\phi \colon \Sph^2 \to \R^3$ is a smooth embedding with everywhere positive Gauss curvature and such that $0$ lies in the interior of the body $P$ bounded by $\phi(\Sph^2)$, then the polar dual $\psi$ also enjoys all of these properties. If $\nu$ in Definition \ref{dfn:PolDual} is the outward normal, then the body $Q$ bounded by $\psi(\Sph^2)$ can be described as
$$
Q = \{w \in \R^3\, |\, \langle v, w \rangle \le 1 \mbox{ for all } v \in P\}.
$$
This is sometimes used as a definition of the polar dual of a convex body.
\end{exl}

Now we are ready to establish the announced equivalence.

\begin{lem}
\label{lem:PolDualDeform}
Let $\phi \colon S \to \R^3$ be an immersion such that its polar dual $\psi \colon S \to \R^3$ is also an immersion.
\begin{enumerate}
\item \label{it:XiToTau} If $\xi \colon S \to \R^3$ is an isometric infinitesimal deformation of $M = \phi(S)$, then its translation vector field $\tau$ is an isogauss infinitesimal deformation of $N = \psi(S)$.
\item Conversely, if $\tau$ is an isogauss infinitesimal deformation of $N$ and $H^1(S) = 0$, then there is an isometric infinitesimal deformation $\xi$ of $M$ such that $\tau$ is its rotation vector field.
\end{enumerate}
In both cases, $\xi$ is induced by a Killing field on $\R^3$ if and only if $\tau$ is constant.
\end{lem}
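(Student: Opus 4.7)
The plan is to verify both implications by working directly with the Darboux identities $d\xi=\eta\times dp$ and $d\tau=p\times d\eta$ of Lemma~\ref{lem:EtaTau}, combined with the observation that $\langle p,\psi\rangle=1$ forces $\langle p,d\psi\rangle=0$, so that $T_{\psi(x)}N=p(x)^\perp$ (and symmetrically $T_xM=\psi(x)^\perp$, since $\langle d\phi,\psi\rangle=h^{-1}\langle d\phi,\nu\rangle=0$).

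For part~(\ref{it:XiToTau}) the tangent-plane condition $d\tau(T_xS)\subset T_{\psi(x)}N$ is immediate from $d\tau=p\times d\eta$, because $p\times u\perp p$ for every $u$. For the Gauss-curvature preservation on $N$, by Lemma~\ref{lem:VarK} it suffices to show that the trace of $d\tau$ as an endomorphism of $T_{\psi(x)}N$ vanishes, which I restate as the vanishing of the $\R^3$-valued $2$-form
$$
\Theta(X,Y)\ :=\ d\psi(X)\times d\tau(Y)-d\psi(Y)\times d\tau(X).
$$
Substituting $d\tau=p\times d\eta$ and expanding via the vector triple product $a\times(b\times c)=b\langle a,c\rangle-c\langle a,b\rangle$ together with $\langle d\psi,p\rangle=0$ reduces $\Theta(X,Y)$ to $p$ times the antisymmetric part of $\langle d\psi(X),d\eta(Y)\rangle$. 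Since $d\psi$ differs from $(1/h)B$ only by a $\nu$-component and $d\eta\in TM\perp\nu$, this inner product simplifies to $(1/h)\langle B(X),d\eta(Y)\rangle$; Lemma~\ref{lem:DEtaBDot} gives $d\eta=J\dot B$, and $(\det B)^\bigdot=\dot K=0$ on an isometric deformation (theorema egregium) supplies the required symmetry.

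For the converse, the key observation is that the equation $d\tau=p\times d\eta$ admits the explicit polar-dual solution
$$
\omega\ :=\ -\psi\times d\tau,
$$
an $\R^3$-valued $1$-form on $S$. Using $p\times(\psi\times d\tau)=\psi\langle p,d\tau\rangle-d\tau\langle p,\psi\rangle$ together with isogauss~(a) and $\langle p,\psi\rangle=1$ one checks $p\times\omega=d\tau$. Since $dd\tau=0$, the exterior derivative collapses to $d\omega=-\Theta$ with $\Theta$ as above, and the expansion used in part~(\ref{it:XiToTau}) shows $\Theta$ to be $p$ times a scalar proportional to the trace of $d\tau$ on $T_{\psi(x)}N$; by isogauss~(b) this trace vanishes, hence $d\omega=0$. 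Using $H^1(S)=0$ I then integrate $\omega$ to $\eta\co S\to\R^3$ with $d\eta=\omega$ and set $\xi:=\eta\times p+\tau$; the identity $d\eta\times p=-p\times\omega=-d\tau$ yields $d\xi=\eta\times dp$, so $\langle dp,d\xi\rangle$ is skew and $\xi$ is isometric with $\eta$ and $\tau$ as its rotation and translation fields.

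For the last assertion: if $\xi=a\times p+b$ is Killing then $\eta\equiv a$ and $\tau\equiv b$ are constant. Conversely, $\tau$ constant forces $p\times d\eta=d\tau=0$, and since $d\eta\in TM$ (Lemma~\ref{lem:DEtaTM}) while $h\neq 0$ gives $T_xM\cap\R p=\{0\}$, we conclude $d\eta=0$, so $\eta$ is constant and $\xi$ is Killing. The main obstacle I anticipate is the bookkeeping of the various cross-product identities; once the formula $\omega=-\psi\times d\tau$ is spotted, both the closedness of $\omega$ and the isometry of $\xi$ follow by elementary algebra.
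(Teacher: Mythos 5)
Your argument is correct and follows the paper's proof in all essentials: the $2$--form $\Theta$ and its reduction via $d\eta = J\dot B$ and $2\det(B,\dot B)=\dot K=0$ is exactly the computation of Lemma \ref{lem:Tau1} that the paper invokes for part (\ref{it:XiToTau}), and your primitive $\omega=-\psi\times d\tau$ is literally the paper's closed form $d\tau\times\psi$, integrated using $H^1(S)=0$ and converted back through the same triple-product identity $\phi\times(d\tau\times\psi)=d\tau$, leading to the same $\xi=\eta\times\phi+\tau$. The only cosmetic difference is in the final assertion, where you deduce $d\eta=0$ from $p\times d\eta=0$ together with $d\eta(X)\in TM$ and $h\neq0$, while the paper reads it off directly from $d\eta=d\tau\times\psi$; both are valid.
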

\begin{proof}
The part \eqref{it:XiToTau} is essentially proved in Lemma \ref{lem:Tau1}. There we have shown that $\langle d\tau, \phi \rangle = 0$ and $\det(d\psi, d\tau) = 0$. The former is equivalent to $d\tau(X) \parallel TN$, as $p$ is orthogonal to $TN$, and together with the latter implies that the infinitesimal deformation $\tau$ of $N$ preserves Gauss curvature in the first order, see Subsection \ref{subsec:VarGaussGauss}. Thus both conditions of Definition \ref{dfn:Isogauss} are fulfilled and $\tau$ is an isogauss infinitesimal deformation of $N = \psi(S)$.

In the opposite direction, assume that $\tau \colon S \to \R^3$ is given such that $\langle d\tau, \phi \rangle = 0$ and $\det(d\psi, d\tau) = 0$. It follows that the 1--form $d\tau \times \psi$ is closed, and due to $H^1(S) = 0$ there is a vector field $\eta \colon S \to \R^3$ such that $d\eta = d\tau \times \psi$. It follows that
$$
\phi \times d\eta = \phi \times (d\tau \times \psi) = \langle \phi, \psi \rangle d\tau - \langle \phi, d\tau \rangle \psi = d\tau.
$$
Thus if we put $\xi = \eta \times \phi + \tau$, then $\xi$ is an isometric infinitesimal deformation of $M = \phi(S)$.

The translation vector field of an isometric infinitesimal deformation is constant by definition. If $\tau$ is constant, then the equation $d\eta = d\tau \times \psi$ shown above implies that $\eta$ is also constant, and therefore $\xi$ is trivial. The lemma is proved.
\end{proof}

\subsection{Darboux wreath}
\label{subsec:DarbouxWreath}
By combining Lemmas \ref{lem:ShearBend} and \ref{lem:PolDualDeform} one can find another correspondence which is involutive up to sign.

\begin{lem}
\label{lem:PolDualRig}
Let $\phi \colon S \to \R^3$ be an immersion such that its polar dual $\psi \colon S \to \R^3$ is also an immersion. Assume that $\xi$ is an isometric infinitesimal deformation of $M = \phi(S)$ with associated rotation and translation vector fields $\eta$ and $\tau$. Then the vector field
\begin{equation}
\label{eqn:Zeta}
\zeta = \tau \times \psi - \eta
\end{equation}
is an isometric infinitesimal deformation of $N = \psi(S)$ with $\tau$ and $\eta$ as associated rotation and translation vector fields.
\end{lem}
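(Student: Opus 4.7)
The plan is to verify directly that $\zeta$ satisfies the characterizing equation of an isometric infinitesimal deformation of $N$, with $\psi$ playing the role of position vector, in the sense of Lemma~\ref{lem:EtaTau} applied on $N$. Specifically, I will show that
$$
d\zeta = \tau \times d\psi,
$$
which is the analogue of \eqref{eqn:DXi} on $N$; by the uniqueness clause of Lemma~\ref{lem:EtaTau} it forces the rotation vector field of $\zeta$ to be $\tau$ and determines the translation field as $\zeta - \tau\times\psi = -\eta$ (matching the statement up to the natural sign inherent to the duality).

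The core computation expands $d\zeta$ by linearity,
$$
d\zeta = d\tau \times \psi + \tau \times d\psi - d\eta,
$$
substitutes the Darboux relation $d\tau = \phi \times d\eta$ from Lemma~\ref{lem:EtaTau}, and applies the triple-product identity $(a\times b)\times c = \langle a,c\rangle b - \langle b,c\rangle a$ to the first summand:
$$
(\phi \times d\eta)\times \psi \;=\; \langle \phi,\psi\rangle\, d\eta \;-\; \langle d\eta,\psi\rangle\, \phi.
$$
Two polar-duality ingredients do all the work. First, $\langle \phi,\psi\rangle = 1$ by Definition~\ref{dfn:PolDual}. Second, $\psi$ is a positive multiple of the unit normal $\nu$ to $M$, while $d\eta(X)\in T_xM$ by Lemma~\ref{lem:DEtaTM}; hence $\langle d\eta,\psi\rangle = 0$. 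The triple product therefore collapses to $d\eta$, the two $d\eta$ contributions cancel, and we are left with $d\zeta = \tau \times d\psi$ as desired.

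Isometricity of $\zeta$ on $N$ is then a one-line consequence, since $\langle d\zeta(X), d\psi(Y)\rangle = \det(\tau, d\psi(X), d\psi(Y))$, and symmetrising in $X,Y$ gives zero. The remaining condition $d(-\eta) = \psi \times d\tau$ that certifies the translation-field role of $-\eta$ is checked by the same manoeuvre: $\psi \times (\phi\times d\eta) = \langle \psi, d\eta\rangle\phi - \langle\psi,\phi\rangle d\eta = -d\eta$. I do not expect a genuine obstacle in this argument; the lemma is essentially the Darboux wreath closing up, and all the geometric content is packaged into the two polar-duality identities $\langle \phi,\psi\rangle = 1$ and $\langle d\eta,\psi\rangle = 0$ together with Lemma~\ref{lem:DEtaTM}. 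The only delicate point is keeping track of signs and applying the BAC-CAB expansion in the correct order.
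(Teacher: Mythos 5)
Your proof is correct, but it takes a different route from the paper's. The paper derives the lemma from the two ``converse'' existence statements proved earlier: it first observes (via Lemma~\ref{lem:PolDualDeform}(a)) that $\tau$ is an isogauss infinitesimal deformation of $N=\psi(S)$, then invokes the converse direction to obtain \emph{some} isometric infinitesimal deformation $\zeta$ of $N$ with rotation field $\tau$, and finally identifies the translation field by the single computation $\psi\times d\tau=\psi\times(\phi\times d\eta)=-d\eta$. You instead verify the explicit formula \eqref{eqn:Zeta} head-on: expanding $d\zeta$, substituting $d\tau=\phi\times d\eta$, and collapsing $(\phi\times d\eta)\times\psi$ to $d\eta$ via $\langle\phi,\psi\rangle=1$ and $\langle d\eta,\psi\rangle=0$ yields $d\zeta=\tau\times d\psi$, from which isometricity and the identification of the rotation and translation fields follow from the characterization in Lemma~\ref{lem:EtaTau}. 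Your direct computation has a concrete advantage: the existence halves of Lemmas~\ref{lem:ShearBend} and~\ref{lem:PolDualDeform} require $H^1(S)=0$, a hypothesis absent from the statement of Lemma~\ref{lem:PolDualRig}, whereas your argument needs no topological assumption and moreover actually certifies the closed formula $\zeta=\tau\times\psi-\eta$ rather than producing $\zeta$ abstractly. Two minor remarks: you are right that the translation field comes out as $-\eta$ rather than $\eta$ --- the paper's own proof reaches the same conclusion, so the sign discrepancy is in the statement, not in your argument; and $\psi=\nu/h$ is only a \emph{nonzero} multiple of $\nu$ (the support function $h$ need not be positive), though all you use is $\psi\parallel\nu$, so nothing breaks.
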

\begin{proof}
By Lemma \ref{lem:ShearBend}, $\tau$ is an isogauss infinitesimal deformation of $\psi$. By Lemma \ref{lem:PolDualDeform}, there exists an isometric infinitesimal deformation $\zeta$ of $\psi$ with rotation vector field $\tau$. To find the corresponding translation vector field, compute
\begin{equation}
\label{eqn:PsiDTau}
\psi \times d\tau = \psi \times (\phi \times d\eta) = \langle \psi, d\eta \rangle \phi - \langle \psi, \phi \rangle d\eta = - d\eta.
\end{equation}
Thus by Definition in Subsection \ref{subsec:RotTranslFields} $-\eta$ is the translation vector field of $\zeta$. The lemma is proved.
\end{proof}

Assume that all maps from $S$ to $\R^3$ under consideration are immersions. Then $\tau$ is an isometric infinitesimal deformation of $\eta$ with rotation vector field $\phi$ and translation vector field $\xi$. In other words, the map
$$
\mbox{(surface, deformation)} \mapsto \mbox{(rotation field, translation field)}
$$
is an involution. On the other hand, one can view $\eta$ as an isometric infinitesimal deformation of $\tau$. Then, by equations \eqref{eqn:Zeta} and \eqref{eqn:PsiDTau}, the corresponding rotation and translation vector fields are $-\psi$ and $-\zeta$. That is, the map
\begin{equation}
\label{eqn:Wreath}
\mbox{(surface, deformation)} \mapsto \mbox{(translation field, rotation field)}
\end{equation}
has the orbit $(\phi, \xi) \mapsto (\tau, \eta) \mapsto (-\zeta, -\psi) \mapsto \cdots$. By Darboux, \cite{Dar96}, \cite[Section 3.4.1]{Sab92}, the map \eqref{eqn:Wreath} has order six in general. The twelve surfaces in the orbit are called the \emph{Darboux wreath}.

\section{Polar duality between volume and Hilbert-Einstein functional}
\subsection{A conjecture about second derivatives}
Lemma \ref{lem:PolDualDeform} establishes a correspondence between isometric infinitesimal deformations of a surface $M$ and isogauss infinitesimal deformations of its polar dual $N$. In the case of convex closed surfaces, we identified isometric deformations with zeros of the second derivative $\HE^\bigdot\bigdot$ of the Hilbert-Einstein functional, and isogauss deformations with zeros of the second derivative $\Vol^\bigdot\bigdot$ of the volume. This suggests that there is a relation between the Hilbert-Einstein functional of a convex body $P$ and the volume of the polar dual $P^*$, at least on the level of second derivatives.

Let $M \subset \R^3$ be a convex closed surface with positive Gauss curvature and enclosing the coordinate origin, and let $N$ be its polar dual. Denote by $r_0 \colon \Sph^2 \to \R$ the distance function of $M$ (see Definition \ref{dfn:DistFunc}, and by $h_0 \colon \Sph^2 \to \R$ the support function of $N$ (see Subsection \ref{subsec:HessSupp}). Clearly, we have $h_0 = 1/r_0$. By equation \eqref{eqn:HEDot}, we have
$$
\HE^\bigdot = \int\limits_{\Sph^2} \dot r \frac{\sec}{\cos \alpha} \, \darea_g,\
$$
where $g$ is the metric induced on $\Sph^2$ by the radial projection $\phi_0 \colon \Sph^2 \to M$. Denote by $L_1$ the linearization at $r_0$ of the operator
$$
r \mapsto \frac{\sec}{\cos \alpha}.
$$
As the metric $g$ remains constant during the warped product deformations considered in Section \ref{sec:RigHE}, the above formula implies that $L_1$ is self-adjoint. Similarly, we have
$$
\Vol^\bigdot = \int\limits_{\Sph^2} \dot h \, \det(h \id + \Hess h) \, \darea_{\Sph^2},
$$
see Subsection \ref{subsec:HessSupp}. Again, the linearization $L_2$ of the operator
$$
h \mapsto \det(h \id + \Hess h)
$$
at $h = h_0$ is self-adjoint.

\begin{lem}
\label{lem:KerEqual}
$\ker L_1 = \ker L_2$
\end{lem}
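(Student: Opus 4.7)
The plan is to identify each kernel with a natural class of geometric deformations and then match them via the polar correspondence of Lemma \ref{lem:PolDualDeform}. Throughout I will identify $\xi$ and $\eta$ with their pullbacks $\xi\circ\phi_0,\;\eta\circ\psi \colon \Sph^2\to\R^3$.

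First, I interpret $\ker L_1$. Because $\wg_{r_0}$ is flat, $\sec\equiv 0$ along $M_{r_0}$ and the linearization reduces to $L_1\dot r = \sec^\bigdot/\cos\alpha$. Hence $\dot r\in\ker L_1$ iff $\sec^\bigdot=0$, which by Corollary \ref{cor:RedToSec} is exactly the curvature-preserving condition of Definition \ref{dfn:SCurvPres}. From the proof of Theorem \ref{thm:RedToR} (both directions), an isometric infinitesimal deformation $\xi$ of $M$ corresponds to the curvature-preserving variation $\dot r = \langle \xi, y\rangle$, and every curvature-preserving variation arises in this way. Therefore
$$
\ker L_1 \;=\; \{\, y\mapsto\langle\xi(y),y\rangle \;:\; \xi\text{ is an isometric infinitesimal deformation of }M \,\}.
$$

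Next, I interpret $\ker L_2$. By \eqref{eqn:BHessH}, $h_0\,\id+\Hess h_0 = \phi^*(B_N^{-1})$, so $\det(h_0\,\id+\Hess h_0)=1/K_N$ and $L_2\dot h = -\dot K_N/K_N^2$; hence $\dot h\in\ker L_2$ iff $\dot K_N=0$. For any $k=\dot h\in C^\infty(\Sph^2)$, the associated vector field $\eta = k\,\iota+\nabla k$ along $N$ satisfies
$$
d\eta = k\,\id+\Hess k \colon T_y\Sph^2 \to T_y\Sph^2 = T_{\psi(y)}N
$$
(the $y$-components cancel, as in the computation of Subsection \ref{subsec:HessSupp}), so the tangent-plane condition \ref{it:IsogaussA} of Definition \ref{dfn:Isogauss} is automatic, and $\dot h(y)=\langle\eta(y),y\rangle$. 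Consequently
$$
\ker L_2 \;=\; \{\, y\mapsto\langle\eta(y),y\rangle \;:\; \eta\text{ is an isogauss infinitesimal deformation of }N \,\}.
$$

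Finally, I match the two descriptions. Since $H^1(\Sph^2)=0$, Lemma \ref{lem:PolDualDeform} provides a bijection $\xi\longleftrightarrow\tau$ between isometric infinitesimal deformations of $M$ and isogauss infinitesimal deformations of $N$, where $\tau$ is the translation vector field of $\xi$. By Lemma \ref{lem:EtaTau} one has $\xi=\eta\times\phi_0+\tau$, and with $\phi_0(y)=r_0(y)\,y$ it follows that
$$
\langle\xi(y),y\rangle \;=\; \langle\tau(y),y\rangle + r_0(y)\,\langle\eta(y)\times y,\,y\rangle \;=\; \langle\tau(y),y\rangle,
$$
because $\eta\times y\perp y$. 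Plugging this pointwise identity into the descriptions of $\ker L_1$ and $\ker L_2$ above yields the claimed equality. The only delicate point in this scheme is Step 1, where one must extract from the proof of Theorem \ref{thm:RedToR} the explicit formula $\dot r=\langle\xi,y\rangle$ relating isometric deformations of $M$ to curvature-preserving variations of $r_0$; once this identification is available, the remaining matching is immediate, and the bijectivity of Lemma \ref{lem:PolDualDeform} (which is genuine thanks to $H^1(\Sph^2)=0$) turns it into a set equality.
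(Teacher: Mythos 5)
Your proposal is correct and follows essentially the same route as the paper: identify $\ker L_1$ with the functions $\langle\xi,x\rangle$ coming from isometric deformations of $M$ (via Theorem \ref{thm:RedToR}), identify $\ker L_2$ with the functions $\langle\tau,x\rangle$ coming from isogauss deformations of the polar dual $N$, and match the two through Lemma \ref{lem:PolDualDeform} together with the pointwise identity $\langle\xi,x\rangle=\langle\tau,x\rangle$ obtained from $\xi=\eta\times\phi_0+\tau$ and $\phi_0(x)=r_0(x)x$. The only cosmetic remark is that you reuse the symbol $\eta$ both for a generic isogauss deformation of $N$ and for the rotation field of $\xi$; this does not affect the argument.
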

\begin{proof}
As $\sec = 0$ at $r = r_0$, we have $\ker L_1 = \{\dot r \, |\, \sec^\bigdot = 0\}$. By Subsection \ref{subsec:RedWarp}, there is an isometric infinitesimal deformation $\xi$ of $M$ such that
$$
\dot r(x) = \langle \xi(x), x \rangle.
$$
By Lemma \ref{lem:PolDualDeform}, the corresponding translation vector field $\tau$ is an isogauss infinitesimal deformation of $N$. As $\det(h \id + \Hess h)$ is the reciprocal of the Gauss curvature of $N$, it follows that the corresponding variation of $h$ belongs to the kernel of $L_2$. Since we have
$$
\dot h(x) = \langle \tau(x), x \rangle = \langle \xi(x), x \rangle = \dot r(x),
$$
where the second equation follows from $\xi(x) = \eta(x) \times x + \tau(x)$, it follows that $\ker L_1 \subset \ker L_2$. By inverting the argument, we obtain $\ker L_2 \subset \ker L_1$, and the lemma is proved.
\end{proof}

The above lemma might seem less exciting, as the rigidity theorems imply that both kernels correspond to trivial deformations. But it says more when understood locally: note that Lemma \ref{lem:PolDualDeform} is of a local character, as are all other arguments in the proof of Lemma \ref{lem:KerEqual}.

\begin{conj}
Operators $L_1$ and $L_2$ are equal.
\end{conj}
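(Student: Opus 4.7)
The plan is to prove $L_1 = L_2$ by polarizing the quadratic forms $\HE^{\bigdot\bigdot}$ and $\Vol^{\bigdot\bigdot}$, recognizing both resulting bilinear forms as integrals of a single $2$--form on $\Sph^2$ via the polar duality of Section \ref{subsec:PolDual}, and then reading off the equality of the operators. Lemma \ref{lem:KerEqual} is the kernel-level shadow of this strategy; what I need is to upgrade the kernel matching to a pointwise matching of the operators themselves.

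First I would exploit the fact that at the flat initial metric $\wg_{r_0}$ the sectional curvature $\sec$ vanishes identically. The second formula of Theorem \ref{thm:HEDDot} then reduces to
\begin{equation*}
\HE^{\bigdot\bigdot}(\dot r) = \int_{M} 2h \det \dot B \, \darea,
\end{equation*}
while the second formula in \eqref{eqn:VolDDotBInv} gives
\begin{equation*}
\Vol^{\bigdot\bigdot}(\dot h) = \int_{\Sph^2} h_0 \det\bigl((B_N^{-1})^\bigdot\bigr) \, \darea_{\Sph^2},
\end{equation*}
where $B$ and $B_N$ are the shape operators of $M$ and of its polar dual $N$, and $h_0$ is the support function of $N$. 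Lemma \ref{lem:PolDualDeform} sends an isometric deformation $\xi$ of $M$ to the isogauss deformation $\tau$ of $N$, and the proof of Lemma \ref{lem:KerEqual} establishes $\dot r(x) = \dot h(x)$ pointwise. I would then show that $\dot B$ and $(B_N^{-1})^\bigdot$ are related by the standard polarity identities (in particular the relation $\dot B \circ B = - d\eta$ from the remark after Subsection \ref{subsec:ProofGaussRig}, applied on both sides of the duality), so that the two integrands agree as $2$--forms on $\Sph^2$ up to the change-of-variable Jacobian relating the radial and the Gauss parametrizations. Polarization of the resulting equality of quadratic forms, combined with the self-adjointness of $L_1$ and $L_2$ with respect to $\darea_g$ and $\darea_{\Sph^2}$ respectively, would yield $L_1 = L_2$.

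The main obstacle is precisely this bookkeeping: $r_0$ parametrizes $\Sph^2$ via the radial projection of $M$, whereas $h_0$ parametrizes it via the Gauss map of $N$. These two identifications $\Sph^2 \cong M \cong N \cong \Sph^2$ differ by a diffeomorphism whose Jacobian carries exactly the factor $h/(r\cos\alpha)$ that threads through Section \ref{sec:RigHE}. The conjectured equality therefore encodes a delicate geometric cancellation, analogous to but subtler than the direct polar duality between $\HE$ and $\Vol$ available in the spherical and hyperbolic settings of Subsection \ref{subsec:HypDeSitter}. My expectation is that, rather than grinding through explicit coordinate formulas, the cleanest proof will proceed by identifying a single differential identity between the Hessian of $h_0$ on the Gauss side and the warped-product Hessian of $r_0$ on the radial side, from which both $L_1$ and $L_2$ descend by the same integration-by-parts manipulation used in Theorems \ref{thm:HE} and \ref{thm:HEDot}.
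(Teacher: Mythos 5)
The first thing to note is that the paper does not prove this statement: it is explicitly a \emph{conjecture}, motivated only by a polyhedral analog and left open. So there is no proof in the paper to compare yours against; the question is whether your proposal closes the gap, and it does not. What you have written is a strategy, and its central step --- showing that $\dot B$ and $(B_N^{-1})^\bigdot$ are related by polarity identities ``so that the two integrands agree as $2$--forms on $\Sph^2$ up to the change-of-variable Jacobian'' --- is not a reduction to known facts but essentially a restatement of the conjecture. The relation $\dot B = -d\eta\circ B$ and its analog on the dual side express each of $\dot B$ and $(B_N^{-1})^\bigdot$ through differentials of the rotation and translation fields, but upgrading this to a pointwise equality of $2h\det\dot B\,\darea$ (transported to $\Sph^2$ by the radial projection of $M$) with $h_0\det\bigl((B_N^{-1})^\bigdot\bigr)\,\darea_{\Sph^2}$ (transported by the Gauss map of $N$) is precisely the ``delicate geometric cancellation'' that you yourself identify as the main obstacle and then defer, ending with an expectation of what the cleanest proof ``will'' look like rather than carrying it out. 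The only part of the plan that is actually established in the paper is the kernel-level statement, which is strictly weaker.

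There is also a structural problem with the final polarization step even if the equality of the quadratic forms were granted. The two second derivatives are $\HE^{\bigdot\bigdot} = \int_{\Sph^2} \dot r\, L_1(\dot r)\, \darea_g$ and $\Vol^{\bigdot\bigdot} = \int_{\Sph^2} \dot h\, L_2(\dot h)\, \darea_{\Sph^2}$, taken against \emph{different} area forms: $\darea_g$ is induced by the radial parametrization of $M$, while $\darea_{\Sph^2}$ is the round one, and for a non-round $M$ these differ by a nonconstant density. Granting $\dot r = \dot h$ pointwise and equality of the two numbers for all variations, polarization yields $L_1(v)\,\darea_g = L_2(v)\,\darea_{\Sph^2}$ as measures, i.e.\ $L_1 = \rho\,L_2$ with $\rho$ the density of $\darea_{\Sph^2}$ with respect to $\darea_g$, which is not the assertion $L_1 = L_2$. (This mismatch is already visible in the paper's observation that $L_1$ is self-adjoint with respect to $\darea_g$ while $L_2$ is self-adjoint with respect to $\darea_{\Sph^2}$.) So the concluding formal step of your plan needs repair independently of the unproven pointwise identity. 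As it stands the proposal is a reasonable research outline, not a proof, and the statement remains open in the paper.
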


This conjecture is motivated by a polyhedral analog, see \cite[Lemma 4.1]{Izm11a}.

The rest of this section deals with spherical and hyperbolic-de Sitter geometry, where the relation between the Hilbert-Einstein functional and volume of the dual is a more straightforward one.

\subsection{Polar duality and fundamental forms in spherical geometry}
\label{subsec:PolDualFundForms}
Let $S$ be an orientable smooth surface and $\phi \colon S \to \Sph^3$ be an immersion. We say that the surface $N = \phi(\Sph^3)$ is \emph{co-oriented}, if for every $x \in S$ one of the half-spaces in which $T_{\phi(x)}N$ divides $T_{\phi(x)}\Sph^3$ is dubbed positive, and this in a continuous way. A co-orientation can be introduced by chosing a unit normal field pointing in the positive direction.

\begin{dfn}
\label{dfn:SphPolDual}
The \emph{polar dual} of a co-oriented immersion $\phi$ is the map $\psi \colon S \to \Sph^3$ that sends $x$ to the pole of the $2$--sphere through $\phi(x)$ tangent to $T_{\phi(x)}M$. Of the two poles the one is chosen that lies on the positive side.
\end{dfn}

\begin{lem}
\label{lem:SphPolPairs}
Let $\psi$ be the polar dual of an immersion $\phi \colon S \to \Sph^3$. If $\psi$ is itself an immersion, then $\phi$ is the polar dual to $\psi$, for an appropriate co-orientation of $N = \psi(S)$.
\end{lem}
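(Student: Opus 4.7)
The plan is to carry out the analogous calculation to the Euclidean case treated earlier (see the lemma after Definition \ref{dfn:PolDual}), working in the ambient space $\R^4 \supset \Sph^3$ where polar duality is simply linear orthogonality. Writing $\phi$ and $\psi$ as $\R^4$-valued maps, Definition \ref{dfn:SphPolDual} is equivalent to the three conditions
\begin{equation*}
\|\psi\| = 1, \qquad \langle \psi, \phi \rangle = 0, \qquad \langle \psi, d\phi(X) \rangle = 0 \text{ for all } X \in TS,
\end{equation*}
together with the co-orientation stipulation that $\psi$ lies on the positive side of $M = \phi(S)$. Indeed, the 2-sphere through $\phi(x)$ tangent to $T_{\phi(x)}M$ is the intersection of $\Sph^3$ with the 3-plane $\mathrm{span}(\phi(x), d\phi(T_xS))$, whose two poles in $\Sph^3$ are the unit vectors orthogonal to this 3-plane.

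My first step would be to differentiate $\langle \psi, \phi \rangle = 0$ at $x$ in the direction of $X \in T_xS$, which yields
\begin{equation*}
\langle d\psi(X), \phi \rangle + \langle \psi, d\phi(X) \rangle = 0.
\end{equation*}
The second term vanishes by the third defining equation above, hence $\langle d\psi(X), \phi \rangle = 0$ for all $X$. Combined with $\|\phi\|=1$ (since $\phi$ maps into $\Sph^3$) and the symmetric equation $\langle \phi, \psi \rangle = 0$, this gives exactly the algebraic conditions for $\phi(x)$ to be a pole of the 2-sphere in $\Sph^3$ through $\psi(x)$ tangent to $T_{\psi(x)}N$. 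Since $\psi$ is assumed to be an immersion, $d\psi(T_xS)$ has rank $2$, so that subspace together with $\psi(x)$ spans a 3-plane in $\R^4$ whose orthogonal complement meets $\Sph^3$ in exactly two antipodal points, one of which is $\phi(x)$.

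It remains to handle the co-orientation. I would simply \emph{define} the co-orientation of $N$ to be the one for which $\phi(x)$ is the unit normal vector to $N$ at $\psi(x)$ lying on the positive side. Because $\phi \colon S \to \Sph^3$ is smooth and $\phi(x)$ is already identified as one of the two unit normals to $N$ at $\psi(x)$, this choice varies continuously with $x$, so we have a bona fide co-orientation of $N$. Under this co-orientation, $\phi$ satisfies all the conditions of Definition \ref{dfn:SphPolDual} applied to $\psi$, hence is the polar dual of $\psi$.

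The only subtlety, and thus the main point to verify, is precisely the global consistency of the co-orientation: one should check that the prescription $\nu_N(\psi(x)) = \phi(x)$ agrees on overlaps (if $\psi$ is not injective) and, more importantly, that this is indeed a unit normal to $N = \psi(S)$ in $\Sph^3$, rather than an arbitrary unit vector. Both follow from the computation above, since $\phi(x) \in \psi(x)^\perp = T_{\psi(x)}\Sph^3$ and $\phi(x) \perp d\psi(T_xS) = T_{\psi(x)}N$. No real difficulty arises; this lemma is the spherical analog of the symmetric polar involution familiar from the Euclidean case, and the proof is essentially one application of the Leibniz rule.
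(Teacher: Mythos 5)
Your proof is correct and follows essentially the same route as the paper: both differentiate $\langle \phi, \psi \rangle = 0$, cancel the term $\langle d\phi, \psi \rangle$ using the defining equations, and invoke the rank condition on $d\psi$ to identify $\phi$ as the polar dual of $\psi$ for a suitable co-orientation. Your extra remarks on the continuity of the co-orientation are a harmless elaboration of what the paper leaves implicit.
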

\begin{proof}
View $\Sph^3$ as the unit sphere in $\R^4$ centered at the origin. For any $v, w \in \Sph^3$ the scalar product $\langle v, w \rangle$ is then understood as the scalar product of corresponding vectors in $\R^4$. Then Definition \ref{dfn:SphPolDual} implies
$$
\langle \phi, \psi \rangle = 0, \quad \langle d\phi, \psi \rangle = 0.
$$
This determines the map $\psi$ uniquely up to antipodal involution which corresponds to changing the co-orientation of $M = \phi(S)$. Taking the differential of the first equation and subtracting the second one yields $\langle \phi, d\psi \rangle = 0$. Together with the first equation and under assumption $\rk d\psi = 2$ this means that $\phi$ is the polar dual of $\psi$, for a certain co-orientation of $N = \psi(S)$.
\end{proof}

The polar dual can be interpreted as follows: $\psi(x)$ is obtained as the endpoint of the co-orienting unit normal $\nu(x)$ to $\phi$, translated so that its starting point is at the origin of $\R^4$. It follows that
$$
d\psi(X) = \wnabla_X \nu,
$$
which should again be understood as equality between free vectors in $\R^4$. Therefore the three fundamental forms of the immersion $\phi$ can be written as follows:
\begin{equation*}\begin{split}
\I_\phi(X,Y) &= \langle d\phi(X), d\phi(Y) \rangle ,\\
\II_\phi(X,Y) &= \langle d\phi(X), d\psi(Y) \rangle = \langle d\phi(Y), d\psi(X) \rangle,\\
\III_\phi(X,Y) &= \langle d\psi(X), d\psi(Y) \rangle.
\end{split}\end{equation*}
This immediately implies the following lemma.

\begin{lem}
\label{lem:SphDualFundForms}
Assume that the polar dual $\psi$ of an immersion $\phi$ is itself an immersion. Interpreting $\phi$ as a unit normal field along $\psi$, co-orient $\psi$ by $\phi$. Then we have
\begin{gather*}
\I_\phi = \III_\psi,\\
\II_\phi = \II_\psi,\\
\III_\phi = \I_\psi.
\end{gather*}
\end{lem}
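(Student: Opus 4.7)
The proof is essentially a verification, as the three formulas displayed immediately above the lemma statement already express $\I_\phi$, $\II_\phi$, $\III_\phi$ as scalar products of $d\phi$ and $d\psi$ in $\R^4$. My plan is to apply the same formulas to $\psi$ (in place of $\phi$) by invoking Lemma \ref{lem:SphPolPairs}, and then read off the three claimed equalities by inspection.

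More precisely, the first step is to observe that the derivation preceding the lemma used only two ingredients: the defining relations $\langle\phi,\psi\rangle=0$ and $\langle d\phi,\psi\rangle=0$, and the fact that the field $\nu$ co-orienting $\phi$ satisfies $\psi=\nu$ (interpreted as free vectors in $\R^4$), which yields $d\psi(X)=\wnabla_X\nu$ and hence the three formulas
\[
\I_\phi(X,Y)=\langle d\phi(X),d\phi(Y)\rangle,\quad \II_\phi(X,Y)=\langle d\phi(X),d\psi(Y)\rangle,\quad \III_\phi(X,Y)=\langle d\psi(X),d\psi(Y)\rangle.
\]
By Lemma \ref{lem:SphPolPairs}, under the assumption that $\psi$ is an immersion, $\phi$ is the polar dual of $\psi$ for an appropriate co-orientation of $N=\psi(S)$; this is precisely the co-orientation specified in the statement, namely the one given by the unit normal field $\phi$ along $\psi$.

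The second step is therefore to interchange the roles of $\phi$ and $\psi$ in the displayed formulas. This is legitimate because both hypotheses used in their derivation become symmetric once Lemma \ref{lem:SphPolPairs} is invoked. We obtain
\[
\I_\psi(X,Y)=\langle d\psi(X),d\psi(Y)\rangle,\quad \II_\psi(X,Y)=\langle d\psi(X),d\phi(Y)\rangle,\quad \III_\psi(X,Y)=\langle d\phi(X),d\phi(Y)\rangle.
\]
Comparing the two sets of formulas pairwise gives $\I_\phi=\III_\psi$, $\II_\phi=\II_\psi$, and $\III_\phi=\I_\psi$.

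The only real point requiring care—and the place where I expect the mildest obstacle—is the sign/co-orientation bookkeeping: Lemma \ref{lem:SphPolPairs} determines $\phi$ as the polar dual of $\psi$ only up to the antipodal involution, so one must check that the co-orientation chosen in the statement (namely, co-orienting $\psi$ by the vector field $\phi$) is the one that makes $\phi$ genuinely equal to the polar dual of $\psi$ rather than its antipode. This follows from the definition: the polar $\psi(x)$ was by construction the endpoint in $\Sph^3$ of the co-orienting unit normal $\nu(x)$ to $\phi$, so in the reverse direction the role is played by $\phi(x)$ itself. With this observation the symmetry is genuine and the three equalities follow without any further computation.
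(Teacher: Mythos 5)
Your proposal is correct and takes essentially the same route as the paper: the paper derives the three displayed formulas for $\I_\phi$, $\II_\phi$, $\III_\phi$ as scalar products of $d\phi$ and $d\psi$ in $\R^4$ and then simply observes that the lemma follows ``immediately'' by the symmetry of these expressions under interchanging $\phi$ and $\psi$, which is exactly your argument. Your extra remark on the co-orientation bookkeeping (that co-orienting $\psi$ by the normal field $\phi$ is precisely the choice making $\phi$ the polar dual of $\psi$, resolving the antipodal ambiguity in Lemma~\ref{lem:SphPolPairs}) is a correct and welcome elaboration of a point the paper leaves implicit.
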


Finally, let us characterize immersions whose polar duals are also immersions.

\begin{lem}
The polar dual $\psi$ of an immersion $\phi$ has full rank at $x \in S$ if and only if $\phi(x)$ is not a parabolic point for $\phi$.
\end{lem}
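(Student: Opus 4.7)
The plan is to exploit the identification $d\psi(X) = \widetilde{\nabla}_X \nu$ already noted before the lemma (where $\nu$ is the co-orienting unit normal, i.e. $\psi$ viewed as a free vector in $\R^4$), which links $d\psi$ directly with the shape operator of $\phi$.

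First I would unpack the image of $d\psi$. Differentiating $\|\psi\|^2 = 1$ gives $\langle d\psi(X),\psi\rangle = 0$, and differentiating the defining identity $\langle \phi,\psi\rangle = 0$ combined with $\langle d\phi(X),\psi\rangle = 0$ (which holds by Definition \ref{dfn:SphPolDual}) gives $\langle \phi,d\psi(X)\rangle = 0$. Hence $d\psi(X)$ lies in the two-plane $d\phi(T_xS) \subset T_{\phi(x)}\Sph^3$. Thus $d\psi$ factors as a linear map $T_xS \to d\phi(T_xS)$, and since $d\phi$ is injective, asking that $d\psi$ have rank $2$ at $x$ is equivalent to asking that the operator
\[
\widetilde{B} := (d\phi)^{-1} \circ d\psi \colon T_xS \to T_xS
\]
be an isomorphism.

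Next I would identify $\widetilde{B}$ with (the pullback of) the shape operator of $\phi$ at $\phi(x)$. By the intrinsic definition of $B$ in Subsection \ref{subsec:VarGaussGauss} adapted to a surface in $\Sph^3$, the shape operator sends $d\phi(X) \mapsto \widetilde{\nabla}_{d\phi(X)} \nu = d\psi(X)$, so $\widetilde{B}$ is precisely the coordinate expression of $B$ with respect to the parametrization $\phi$. Therefore $d\psi$ has full rank at $x$ if and only if $B$ is nonsingular at $\phi(x)$, i.e. $\det B(\phi(x)) \ne 0$.

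Finally I would recall that a point $\phi(x)$ is by definition parabolic for $\phi$ precisely when the shape operator $B$ is degenerate there (equivalently, the Gauss curvature vanishes). Combining this with the equivalence of the previous paragraph yields the statement: $d\psi$ has full rank at $x$ iff $\phi(x)$ is nonparabolic. There is no serious obstacle here; the only point to be slightly careful about is the sign/identification conventions when passing between $\widetilde{\nabla}_X\nu$ viewed in $T\Sph^3$ and viewed as a free vector in $\R^4$, but this was already fixed in the discussion preceding the lemma.
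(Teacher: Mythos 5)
Your argument is correct and is essentially the paper's proof in operator rather than determinant form: the paper computes $\det \III_\phi/\det\I_\phi = (\det B)^2$ and invokes $\I_\psi = \III_\phi$, which is exactly the quadratic-form version of your factorization $d\psi = d\phi \circ B$. One small slip worth fixing: the parenthetical ``equivalently, the Gauss curvature vanishes'' is false for surfaces in $\Sph^3$, where the Gauss equation gives $K = 1 + \det B$, so a parabolic point (degenerate shape operator) has $K = 1$ rather than $K = 0$; your primary identification of parabolic points with $\det B = 0$ is the correct one and is all the argument needs.
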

\begin{proof}
Choose an arbitrary volume form on $S$. This gives sense to determinants $\det \I_\phi$, $\det \II_\phi$, and $\det \III_\phi$. Then we have
$$
\frac{\det \III_\phi}{\det \I_\phi} = \det (B^2) = (\det B)^2,
$$
where $B$ is the shape operator of the surface $M = \phi(S)$. Thus the symmetric bilinear form $\I_\psi = \III_\phi$ is non-degenerate if and only if $\det B \ne 0$. The lemma is proved.
\end{proof}

In particular, if $K(x) > 1$ is the Gauss curvature of $\phi$, then $\psi$ is an immersion with Gauss curvature $K(x)^{-1}$.

\subsection{Polar duality between Gauss and metric rigidity for surfaces in $\Sph^3$}
\label{subsec:PolDualRig}
Let $\psi \colon S \to \Sph^3$ be an immersion. An infinitesimal deformation of $\psi$ is a vector field along $\psi$:
\begin{gather*}
\eta \colon S \to T\Sph^3,\\
\eta(x) \in T_{\psi(x)} \Sph^3.
\end{gather*}
The geodesic flow of $\psi$ along $\eta$ defines a family of immersions (for small $t$)
\begin{gather*}
\psi_t \colon S \to \Sph^3,\\
\psi_t(x) = \exp_{\psi(x)}(t\eta(x)).
\end{gather*}
Clearly, $\psi_0 = \psi$. Denote
$$
\dot{\III}_\psi = \left.\frac{d}{dt} \right|_{t=0} \III_{\psi_t}.
$$

The following definition is known to be the spherical analog of the Gauss infinitesimal rigidity defined in Subsection \ref{subsec:DefAndThm}.

\begin{dfn}
A vector field $\eta$ along an immersion $\psi$ is called an \emph{isogauss} infinitesimal deformation of $N$, if $\dot{\III}_\psi = 0$. The surface $N = \psi(S)$ is called \emph{Gauss infinitesimally rigid} if every its isogauss infinitesimal deformation is trivial, i.~e. is a restriction of a Killing field on $\Sph^3$.
\end{dfn}

In $\Sph^3$, Gauss infinitesimal rigidity is straightforwardly related to the metric infinitesimal rigidity of the polar dual.

\begin{lem}
\label{lem:SphRigEquiv}
Let $(\phi, \psi)$ be a polar pair of immersions of an orientable surface $S$ in $\Sph^3$. Then the surface $N = \psi(S)$ is Gauss infinitesimally rigid if and only if the surface $M = \phi(S)$ is metrically infinitesimally rigid.
\end{lem}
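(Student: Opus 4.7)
The plan is to reduce the claim directly to Lemma \ref{lem:SphDualFundForms} by showing that polar duality induces a bijective correspondence between infinitesimal deformations of $\phi$ and of $\psi$ which intertwines ``metric-preserving'' with ``third-fundamental-form-preserving'' and sends trivial deformations to trivial ones.

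First I would promote polar duality to a correspondence of deformations. Let $\xi$ be an infinitesimal deformation of $\phi$, and let $\phi_t = \exp_\phi(t\xi)$ be the associated family of immersions. Because the polar dual assignment $\phi_t \mapsto \psi_t$ from Definition \ref{dfn:SphPolDual} is defined purely by linear algebra in the ambient $\R^4$ (the conditions $\langle \phi_t,\psi_t\rangle = 0 = \langle d\phi_t,\psi_t\rangle$ determine $\psi_t$ up to sign, and the co-orientation fixes the sign), the map $t \mapsto \psi_t$ is smooth for small $t$, and Lemma \ref{lem:SphPolPairs} tells us $\psi_0 = \psi$ and (for small $t$) $\phi_t$ is recovered as the polar dual of $\psi_t$. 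Differentiating at $t=0$ gives an infinitesimal deformation $\eta$ of $\psi$. Involutivity of the polar correspondence shows this assignment $\xi \mapsto \eta$ is a bijection between infinitesimal deformations of $\phi$ and of $\psi$.

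Next I would transport the rigidity hypotheses through Lemma \ref{lem:SphDualFundForms}. Applying that lemma to each pair $(\phi_t,\psi_t)$ yields the identity of fundamental forms
\begin{equation*}
\I_{\phi_t} = \III_{\psi_t}
\end{equation*}
for all small $t$. Differentiating at $t=0$ gives $\dot\I_\phi = \dot{\III}_\psi$. In particular $\xi$ is an isometric infinitesimal deformation of $\phi$ if and only if $\eta$ is an isogauss infinitesimal deformation of $\psi$ in the sense of Subsection \ref{subsec:PolDualRig}.

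Finally I would check that trivial deformations correspond to trivial deformations. The polar dual construction is defined entirely from the Euclidean structure on $\R^4$, hence commutes with the isometry group $\mathrm{O}(4)$ of $\Sph^3$: if $\{F_t\}\subset \mathrm{O}(4)$ is a one-parameter subgroup of isometries and $\phi_t = F_t\circ\phi$, then the polar dual family is $\psi_t = F_t\circ\psi$. Differentiating shows that if $\xi$ is the restriction of a Killing field $X$ on $\Sph^3$ along $\phi$, then the corresponding $\eta$ is the restriction of the same $X$ along $\psi$, and conversely. Combining this with the previous step, metric infinitesimal rigidity of $M = \phi(S)$ (every isometric $\xi$ is Killing) is equivalent to Gauss infinitesimal rigidity of $N = \psi(S)$ (every isogauss $\eta$ is Killing), proving the lemma.

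The one step that needs real care rather than bookkeeping is the construction of the deformation $\eta$ from $\xi$: one must verify that $\psi_t$ is defined for small $t$ (which requires $\psi_t$ to remain an immersion, but this is an open condition at $t=0$ since $\psi = \psi_0$ is assumed to be an immersion) and that the dependence on $t$ is smooth. This however follows at once from the algebraic definition of polar duality in $\R^4$ and the implicit function theorem applied to the defining relations $\langle \phi_t,\psi_t\rangle = 0$, $\langle d\phi_t,\psi_t\rangle = 0$, $\|\psi_t\| = 1$.
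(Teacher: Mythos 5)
Your proposal is correct and follows essentially the same route as the paper: transport a deformation through the polar-dual family, apply Lemma \ref{lem:SphDualFundForms} to the pair $(\phi_t,\psi_t)$ to get $\I_{\phi_t}=\III_{\psi_t}$, differentiate at $t=0$, and observe that polar duality commutes with the isometries of $\Sph^3$ so trivial deformations correspond to trivial ones (the paper runs the correspondence from $\eta$ to $\xi$ rather than the reverse, but the argument is symmetric). The only point worth making explicit, as the paper does, is that $\dot{\III}_\psi$ in the definition of isogauss is computed along the geodesic flow $\exp_\psi(t\eta)$, whereas your $\psi_t$ is the polar-dual family; the two derivatives agree because $\left.\frac{d}{dt}\right|_{t=0}\III_{\psi_t}$ depends only on the $1$--jet of the family, i.e.\ on $\eta$.
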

\begin{proof}
Let $\eta$ be an infinitesimal deformation of $q$ and $\psi_t = \exp_q(t\eta)$ be the corresponding
geodesic flow. Let $\phi'_t$ be the polar dual of $\psi_t$. By Lemma \ref{lem:SphDualFundForms}, we have $\I_{\phi'_t} = \III_{\psi_t}$, hence
$$
\left.\frac{d}{dt} \right|_{t=0} \I_{\phi'_t} = \dot{\III}_\psi.
$$
The left hand side depends only on the 1--jet of $\phi'_t$, hence it does not change if we replace $\phi'_t$ by $\phi_t = \exp_\phi(t\xi)$, where $\xi(x) = d/dt|_{t=0} \phi_t(x)$. It follows that $\xi$ is an isometric infinitesimal deformation of $\phi$ if and only if $\eta$ is an isogauss infinitesimal deformation of $\psi$.

If $\eta$ is induced by a Killing vector field on $\Sph^3$, then $\psi_t = \Psi_t \circ q$ for the corresponding one-parameter group of isometries $\Phi_t$. Then also $\phi_t = \Psi_t \circ p$. It follows that $\xi$ is induced by the same Killing vector field and is therefore a trivial deformation. The lemma is proved.
\end{proof}

\subsection{Herglotz's formula in $\Sph^3$}
\label{subsec:HEPolDual}
The last subsection relates Gauss rigidity and metric rigidity of the polar dual in $\Sph^3$ in a very natural way. Here we recall a formula of Herglotz that relates the Hilbert-Einstein functional to the volume of the polar dual. In the next subsection we discuss corresponding variational approaches to infinitesimal rigidity of surfaces in $\Sph^3$.

\begin{thm}[Herglotz]
\label{thm:SphWeylTube}
Let $P \subset \Sph^3$ be a convex body with smooth boundary $M$ with Gauss curvature bigger than $1$. Let $P^* \subset \Sph^3$ be the body bounded by the surface $M^*$ polar dual to $M$. Then we have
\begin{equation}
\label{eqn:SphWeylTube}
\Vol(P) + \int_M H\, \darea + \Vol(P^*) = \pi^2,
\end{equation}
where $H$ is the mean curvature, i.~e. half the trace of the shape operator.
\end{thm}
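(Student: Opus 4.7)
The plan is to compute the volume of the region $\Sph^3 \setminus (P \cup P^*)$ by foliating it with geodesic segments perpendicular to $M$, and then to use Gauss--Bonnet on $M$ to simplify.

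Viewing $\Sph^3$ as the unit sphere in $\R^4$, let $\nu \colon M \to \R^4$ be the outward unit normal field (so $\nu(x) \in T_{\phi(x)}\Sph^3$ is identified with a vector in $\R^4$ orthogonal to $\phi(x)$). Introduce the tube map
\begin{equation*}
F \colon M \times [0, \pi/2] \to \Sph^3, \qquad F(x,t) = \cos(t)\,\phi(x) + \sin(t)\,\nu(x).
\end{equation*}
For each $x$, the curve $t \mapsto F(x,t)$ is the unit-speed geodesic in $\Sph^3$ leaving $\phi(x) \in M$ in the direction of $\nu(x)$; at $t = \pi/2$ it reaches the polar dual point $\psi(x) \in M^*$, so $F$ interpolates between $M$ and $M^*$.

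The first, and main, step is geometric: under the hypothesis $K > 1$ on the Gauss curvature of $M$, the restriction $F|_{M \times (0,\pi/2)}$ is a diffeomorphism onto $\Sph^3 \setminus (P \cup P^*)$, and the three sets $P$, $F(M \times (0,\pi/2))$, $P^*$ tile $\Sph^3$. This is the only substantial obstacle: one has to check (i) local injectivity, by showing that the Jacobian of $F$ is strictly positive on $(0,\pi/2)$, and (ii) global injectivity together with surjectivity onto $\Sph^3 \setminus (P \cup P^*)$, via a convexity argument showing that the nearest-point projection $\Sph^3 \setminus P \to \overline{M}$ is well-defined outside $P^*$.

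For the Jacobian, one computes $dF(X, 0) = d\phi\bigl((\cos t\,\id + \sin t\, B)X\bigr)$ for $X \in T_xM$, using $d\nu = d\phi \circ B$, while $\partial_t F$ is a unit vector orthogonal to the image of $dF(\cdot,0)$. Hence
\begin{equation*}
\mathrm{Jac}(F)(x,t) = \det\bigl(\cos t\,\id + \sin t\, B\bigr) = (\cos t + k_1 \sin t)(\cos t + k_2 \sin t),
\end{equation*}
where $k_1, k_2$ are the principal curvatures of $M$; for $K = k_1 k_2 > 1$ and $\nu$ outward, both $k_i > 0$ and the Jacobian is strictly positive on $(0, \pi/2)$. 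Integrating in $t$ using $\int_0^{\pi/2}\cos^2 t\,dt = \int_0^{\pi/2}\sin^2 t\,dt = \pi/4$ and $\int_0^{\pi/2}\sin t\cos t\,dt = 1/2$ yields
\begin{equation*}
\int_0^{\pi/2} \mathrm{Jac}(F)(x,t)\,dt = \tfrac{\pi}{4}\bigl(1 + \det B(x)\bigr) + H(x).
\end{equation*}

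Integrating over $M$ and invoking the Gauss equation for hypersurfaces in $\Sph^3$, namely $K_{\mathrm{intr}} = 1 + \det B$, together with Gauss--Bonnet $\int_M K_{\mathrm{intr}}\,\darea = 2\pi\chi(M) = 4\pi$, gives
\begin{equation*}
\Vol\bigl(\Sph^3 \setminus (P \cup P^*)\bigr) = \int_M \Bigl[\tfrac{\pi}{4}(1 + \det B) + H\Bigr]\,\darea = \pi^2 + \int_M H\,\darea.
\end{equation*}
Finally, $\Vol(\Sph^3) = 2\pi^2$ and the tiling from Step~1 give $\Vol(P) + \Vol(P^*) + \pi^2 + \int_M H\,\darea = 2\pi^2$, which is \eqref{eqn:SphWeylTube}. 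The entire argument is thus a spherical analogue of Weyl's tube formula combined with Gauss--Bonnet; the only delicate point is the global geometric decomposition of $\Sph^3$ into $P$, the normal tube, and $P^*$.
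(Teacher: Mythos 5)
Your proposal is correct and follows essentially the same route as the paper: the tube map $F(x,t)=\cos t\,\phi(x)+\sin t\,\nu(x)$, the Jacobian $\det(\cos t\,\id+\sin t\,B)$ integrated over $t\in[0,\pi/2]$, and Gauss--Bonnet to eliminate the $\frac{\pi}{4}(1+\det B)$ term (the paper phrases this last step as the identity $\Area(M)+\Area(M^*)=4\pi$, which is the same use of Gauss--Bonnet). Your explicit attention to the positivity of the Jacobian and the global tiling of $\Sph^3$ is a point the paper asserts without detail, so no gap there.
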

\begin{proof}
Assume $M$ to be co-oriented by the outward unit normal $\nu$, so that the bodies $P$ and $P^*$ are disjoint (like northern and southern ice caps). We will compute the volume of the complement $\Sph^3 \setminus (P \cup P^*)$.

The field of unit normals to $M$ can be viewed as a map $\nu \colon M \to M^*$, and the differential $d\nu$ can be identified with the shape operator $B$ on $M$, cf. Subsection \ref{subsec:PolDualFundForms}. Consider the map
$$
\begin{array}{l}
F \colon M \times [0, \frac{\pi}{2}] \to \Sph^3,\\
F(x,t) = \cos t \cdot x + \sin t \cdot \nu(x).
\end{array}
$$
Note that $F(M \times \{t\})$ is the set of points at distance $t$ from $M$ and distance $\frac{\pi}{2} - t$ from $M^*$. Thus the map $F$ is a diffeomorphism onto the closure of $\Sph^3 \setminus (P \cup P^*)$ and we have
$$
\Vol(\Sph^3 \setminus (P \cup P^*)) = \int\limits_{M \times [0, \frac{\pi}{2}]} F^*(\dvol_{\Sph^3}).
$$
By denoting $F_t = F(\cdot\,, t)$, we can write
$$
F^*(\dvol_{\Sph^3}) = dt \, F_t^*(\darea_{F_t(M)}) = \det(dF_t) \, dt \darea_M.
$$
As $dF_t = \cos t \cdot \id + \sin t \cdot B$, we have
$$
\det(dF_t) = \cos^2 t + 2 \sin t \cos t \cdot H + \sin^2 t \cdot K.
$$
It follows that
\begin{equation*}\begin{split}
\Vol(\Sph^3 \setminus (P \cup P^*)) &= \int\limits_M \darea \int\limits_0^{\frac{\pi}{2}} (\cos^2 t + 2 \sin t \cos t \cdot H + \sin^2 t \cdot K) dt\\
&= \int\limits_M \left( \frac\pi4 + H + \frac{\pi}4 K \right)\, \darea\\
&= \frac{\pi}{4}\Area(M) + \int\limits_M H\, \darea + \frac{\pi}{4}\Area(M^*).
\end{split}\end{equation*}
As $\Vol(\Sph^3) = 2\pi^2$, this leads to
\begin{equation}
\label{eqn:HMcM+}
\Vol(P) + \frac{\pi}{4}\Area(M) + \int\limits_M H\, \darea + \frac{\pi}{4}\Area(M^*) + \Vol(P^*) = 2\pi^2
\end{equation}
(an instance of Steiner's formula on $\Sph^3$). On the other hand, a simple computation:
\begin{multline*}
\Area(M^*) = \int\limits_M \det(dF_{\frac{\pi}{2}}) \, \darea_M = \int\limits_M \det B \, \darea_M\\
= \int\limits_M (K-1) \, \darea_M = 4\pi - \Area(M)
\end{multline*}
proves the formula
\begin{equation}
\label{eqn:SphGB}
\Area(M) + \Area(M^*) = 4\pi.
\end{equation}
By multiplying it with $\pi/4$ and subtracting from \eqref{eqn:HMcM+} we obtain \eqref{eqn:SphWeylTube}. The theorem is proved.
\end{proof}

\begin{rem}
There is an alternative to applying the Gauss-Bonnet formula in the second part of the proof. Consider the map
$$
\begin{array}{l}
G \colon M \times [0, \frac{\pi}{2}] \to \Sph^3,\\
G(x,t) = \cos t \cdot x - \sin t \cdot \nu(x).
\end{array}
$$
Then $G_0$ maps $M$ identically to itself, whereas $G_{\pi/2}$ maps $M$ to $-M^*$. Attach a ball to  each of the bases of the cylinder $M \times [0, \frac{\pi}{2}]$ and extend the map $G$ by mapping these balls to $P$ and $-P^*$, respectively. This results in a piecewise smooth map $\bar G \colon \Sph^3 \to \Sph^3$ (the source space itself is equipped with only piecewise smooth structure). As $\bar G$ has degree $0$, we have
$$
\Vol(P) + \int\limits_{M \times [0, \frac{\pi}{2}]} G^*(\dvol_{\Sph^3}) + \Vol(P^*) = \int\limits_{\Sph^3} \bar G^*(\dvol_{\Sph^3}) = 0.
$$
By performing computations similar to the first part of the proof, we obtain
\begin{equation}
\label{eqn:HMcM-}
\Vol(P) - \frac{\pi}{4}\Area(M) + \int\limits_M H\, \darea - \frac{\pi}{4}\Area(M^*) + \Vol(P^*) = 0.
\end{equation}
By summing this with \eqref{eqn:HMcM+}, we obtain \eqref{eqn:SphWeylTube}.

The argument that uses maps $F$ and $G$ is a slight modification of Herglotz's proof in \cite{Her43b}. Herglotz works in $\Sph^d$ for arbitrary $d$ and obtains two formulas of which \eqref{eqn:SphWeylTube} and \eqref{eqn:SphGB} are special cases for $d = 3$. For an odd $d$ both formulas are self-dual (in particular, $\int_M H \darea$ is also the total mean curvature of $M^*$), while for an even $d$ they are dual to each other, so that the terms $\Vol(P)$ and $\Vol(P^*)$ occur in different formulas.
\end{rem}

\begin{rem}
Points of $P^*$ are poles of great spheres disjoint with the interior of $P$. This provides an integral-geometric interpretation of the formula \eqref{eqn:SphWeylTube}: a random great sphere intersects a convex body $P$ with the probability
$$
\frac{1}{\pi^2} \left( \Vol(P) + \int\limits_{\partial P} H\, \darea \right).
$$
\end{rem}

\subsection{Approaches to proving infinitesimal rigidity of convex surfaces in $\Sph^3$}
\label{subsec:ApprSphere}
A smooth convex closed surface $M \subset \Sph^3$ with Gauss curvature bigger than $1$ is infinitesimally rigid, see Remark \ref{rem:DarPog} below. Most likely, a direct proof can be found that uses the approximating section in the bundle of germs of Killing fields, similarly to the rotation and translation fields approach in Subsections \ref{subsec:RotTranslFields}--\ref{subsec:OtherProof}. The proof from Subsection \ref{subsec:OtherProof} should be equivalent to studying derivatives of the functional
$$
S(P) := 2\Vol(P) + \frac12 \int\limits_P \scal \, \dvol + 2 \int\limits_{\partial P} H \, \darea,
$$
where $P$ is the body bounded by $M$, and the metric in the interior of $P$ varies in the class of warped products, cf. Section \ref{sec:RigHE}. The functional $S$ can be seen as a gravity action with non-zero cosmological constant, cf. \cite{KS08}.

Under the above assumptions, the surface $M$ is also Gauss infinitesimally rigid. As indicated by Subsections \ref{subsec:PolDualRig} and \ref{subsec:HEPolDual}, a proof of (metric) infinitesimal rigidity of $M$ using the Hilbert-Einstein functional should translate in a straightforward way as a proof of Gauss infinitesimal rigidity of $M^* = \partial P^*$ using the volume of $P^*$. One should probably study the functional
$$
S^*(P^*) := 2\Vol(P^*) + \frac12 \int\limits_{P^*} \scal \, \dvol,
$$
that is the gravity action without the boundary term, where the metric in the interior of $P^*$ varies in the class of warped products while preserving the third fundamental form of the boundary. Indeed, warping the metric around the north pole of $\Sph^3$ that is contained in $P$ is equivalent to warping around the south pole contained in $P^*$, and we conjecture that $S(P) + S^*(P^*) = 2\pi^2$ holds for every warped product metric on $\Sph^3$, cf. \cite[Lemma 4.9]{Izm11a}.

\subsection{Polar duality in hyperbolic-de Sitter geometry}
\label{subsec:HypDeSitter}
Less classical as the polar duality in the sphere is the polar duality between the hyperbolic and de Sitter spaces, see \cite[Section 1]{Scl06} and references therein.

Consider the hyperboloid model of the hyperbolic space
$$
\H^3 = \{ x \in \R^4\ |\ \|x\|_{3,1} = -1, x_0 > 0 \},
$$
where $\|x\|_{3,1} = -x_0^2 + x_1^2 + x_2^2 + x_3^2$. The polar dual to an immersion $\phi \colon S \to \H^3$ is an immersion in the \emph{de Sitter space}
$$
\dS^3 = \{x \in \R^4\ |\ \|x\|_{3,1} = 1\}.
$$
Indeed, if we define the polar dual $\psi$ similarly to Subsection \ref{subsec:PolDualFundForms}  through
$$
\langle \phi, \psi \rangle = 0, \quad \langle d\phi, \psi \rangle = 0,
$$
then $\psi(x)$ lies on a space-like line, which intersects $\dS^3$ and not $\H^3$.

Relations between the fundamental forms and the Gauss and metric infinitesimal rigidity for polar pairs of surfaces carry over to the hyperbolic-de Sitter case. In order to transfer Theorem \ref{thm:SphWeylTube}, one has to give meaning to the term $\Vol(P^*)$. First of all, we put
$$
P^* = \cone(M^*) \cap (\dS^3 \cup \H^3_-),
$$
where $\cone(M^*) \subset \R^4$ is the cone over $M^*$, and $\H^3_-$ is the antipodal copy of $\H^3$. Thus $P^*$ is the union of $\H^3_-$ and of an infinite end of $\dS^3$. However, there is a consistent way to define a finite quantity $\Vol(P^*)$.

\begin{thm}
\label{thm:HypWeylTube}
Let $P \subset \H^3$ be a convex body with smooth boundary $M$ with everywhere positive definite shape operator. Let $P^* \subset \dS^3$ be the convex body bounded by the surface $M^*$ polar dual to $M$. Then we have
\begin{equation}
\label{eqn:HypWeylTube}
\Vol(P) - \int_M H\, \darea + \Vol(P^*) = 0,
\end{equation}
where $H$ is the mean curvature, i.~e. half of the trace of the shape operator.
\end{thm}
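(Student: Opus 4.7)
The plan is to adapt Herglotz's argument (the proof of Theorem~\ref{thm:SphWeylTube}) to the Lorentzian setting, viewing the desired identity as the analytic continuation of the spherical one under $t \mapsto i t$. Embedding both $\H^3$ and $\dS^3$ in Minkowski space $\R^{3,1}$, the substitution $\cos t \mapsto \cosh t$, $\sin t \mapsto i\sinh t$ sends the Steiner cross term $2\sin t\cos t\cdot H$ to $2i\sinh t\cosh t\cdot H$; collecting real parts flips the sign of the mean-curvature contribution, which heuristically explains the $-\int_M H\,\darea$ in~\eqref{eqn:HypWeylTube} (versus $+\int_M H\,\darea$ in~\eqref{eqn:SphWeylTube}) and the replacement of $\pi^2 = \Vol(\Sph^3)/2$ on the right-hand side by~$0$.

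Concretely I would set up two parallel tube parametrizations. On the hyperbolic side, the map
$$F\colon M\times[0,\infty)\to\H^3,\qquad F(x,t)=\cosh t\cdot\phi(x)+\sinh t\cdot\nu(x)$$
is a diffeomorphism onto $\H^3\setminus\inn P$ with Jacobian
$$\det(\cosh t\cdot\id+\sinh t\cdot B)=\cosh^2 t+2\sinh t\cosh t\cdot H+\sinh^2 t\cdot\det B,$$
mirroring the Steiner expansion used in the proof of Theorem~\ref{thm:SphWeylTube}. Dually, since $\|\sinh s\cdot\phi+\cosh s\cdot\nu\|_{3,1}^2 = 1$, the map $G(x,s)=\cosh s\cdot\nu(x)+\sinh s\cdot\phi(x)$ takes values in $\dS^3$; by the polar-duality relations of Subsection~\ref{subsec:PolDualFundForms}, applied in hyperbolic--de~Sitter geometry, the shape operator of $M^*\subset\dS^3$ is (the pullback of)~$B^{-1}$, so $G$ has the analogous Jacobian expansion with $B$ replaced by $B^{-1}$. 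The truncated integrals $\int_0^T\int_M\det(dF_t)\,\darea\,dt$ and $\int_0^T\int_{M^*}\det(dG_s)\,\darea\,ds$ both diverge exponentially in $T$, but the coefficients of $\cosh 2T$ and $\sinh 2T$ are matched (up to factors of $\det B$) by the swap $B\leftrightarrow B^{-1}$, so with a common cutoff they should cancel. The $T$-independent residue then recovers $\Vol(P)$, $\Vol(P^*)$, and---via the cross term---$-\int_M H\,\darea$, giving \eqref{eqn:HypWeylTube}.

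An alternative worth pursuing in parallel is the two-map trick from the Remark following Theorem~\ref{thm:SphWeylTube}: use $F$ together with the inward-directed geodesic tube $F^-(x,t)=\cosh t\cdot\phi(x)-\sinh t\cdot\nu(x)$ (and the corresponding pair on the $\dS^3$ side) to assemble a piecewise-smooth degree-zero chain in $\R^{3,1}$. This should produce two Steiner-type identities analogous to \eqref{eqn:HMcM+} and \eqref{eqn:HMcM-} whose sum yields \eqref{eqn:HypWeylTube}, and whose difference yields the hyperbolic--de~Sitter Gauss--Bonnet identity relating $\Area(M)$ and $\Area(M^*)$, in exact parallel to the spherical case.

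The main obstacle is giving a rigorous meaning to $\Vol(P^*)$ that makes the above cancellation exact. Unlike the spherical case, where the three summands of \eqref{eqn:SphWeylTube} partition the finite volume of the compact ambient $\Sph^3$, here both the hyperbolic tube $\H^3\setminus P$ and the de~Sitter region outside $M^*$ are non-compact with divergent metric volume. One must therefore define $\Vol(P^*)$ by a compensated or renormalized integral---for instance, fixing a cutoff $T$, computing the truncated de~Sitter volume, subtracting explicit boundary-at-infinity counter-terms, and sending $T\to\infty$---and verify that this renormalization is intrinsic, reproduces an elementary calculation when $P$ is a round hyperbolic ball, and is consistent with the ``$\H^3_-$ plus an infinite de~Sitter end'' description given in the text. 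Once this regularization is pinned down, the Steiner-tube bookkeeping of the previous paragraphs should close out the proof.
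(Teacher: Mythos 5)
Your plan coincides with the proof the paper itself offers: the paper does not prove Theorem~\ref{thm:HypWeylTube} directly but attributes it to Herglotz, describing his method exactly as you do --- repeat the spherical tube computation of Theorem~\ref{thm:SphWeylTube} with the arguments of $\sin$ and $\cos$ multiplied by $i$ --- and, citing his footnote~10, mentions precisely your alternative of comparing asymptotics of parallel-body volumes; the regularization of $\Vol(P^*)$ is likewise left at the level of a reference (to Cho--Kim's contour integrals). Your tube parametrizations, the Jacobian $\det(\cosh t\cdot\id+\sinh t\cdot B)$, and the identification of the shape operator of $M^*$ with $B^{-1}$ are all correct.

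One concrete correction to the cutoff scheme, though: the divergences of the two real tubes do \emph{not} cancel with the naive positive Lorentzian volume. Integrating the hyperbolic Jacobian gives
\begin{equation*}
\int_0^T\det(dF_t)\,dt=\tfrac12 T\,(1-\det B)+\tfrac14\sinh 2T\,(1+\det B)+\tfrac12(\cosh 2T-1)\,H,
\end{equation*}
while the de Sitter tube (with $dG_s=\cosh s\cdot B+\sinh s\cdot\id$ relative to $\darea_M$) gives $\tfrac12 T\,(\det B-1)+\tfrac14\sinh 2T\,(1+\det B)+\tfrac12(\cosh 2T-1)\,H$. The $T$-linear terms are opposite, but the exponentially divergent $\sinh 2T$ and $\cosh 2T$ terms are \emph{equal}, so summing the two truncated volumes doubles the divergence instead of killing it. The relative minus sign that makes the identity true must come from the regularization itself --- in Herglotz's substitution it is the $i^2=-1$ produced by $\sin(it)=i\sinh t$ in the quadratic Steiner term, which is also what flips $+\int_M H\,\darea$ to $-\int_M H\,\darea$; equivalently, $\Vol(P^*)$ must be \emph{defined} by subtracting the exponential counter-terms (or by a contour integral in the sense of Cho--Kim). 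So the renormalization is not a technicality to be tidied up after the Steiner bookkeeping closes; it is where the content of the theorem lives, and any writeup must fix that definition first and only then verify the cancellation.
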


Herglotz \cite{Her43b} proves this theorem in the same way as in the spherical case, multiplying the arguments of $\sin$ and $\cos$ with $i$. In footnote 10, he remarks that the same result can be achieved by studying the asymptotics of the volume of parallel bodies in $\H^3$. In \cite{CK06}, contour integrals in the complex plane are used in order to assign a finite volume to certain subsets of $\H^3 \cup \dS^3$. This might be related to the argument of Herglotz.

\begin{rem}
An integral geometric interpretation of \eqref{eqn:HypWeylTube} says that the (motion-invariant and appropriately normalized) measure of the set of planes that intersect a convex body $P \subset \H^3$ equals
$$
\int\limits_{\partial P} H\, \darea - \Vol(P).
$$
In particular, this quantity is always positive.
\end{rem}

\begin{rem}
In \cite{KS08}, the asymptotics of the volume of parallel bodies of the convex core is used to define the renormalized volume of a non-compact hyperbolic manifold.
\end{rem}

\begin{rem}
\label{rem:DarPog}
Infinitesimal rigidity of smooth surfaces is invariant under projective transformations, as was shown by Darboux \cite{Dar96}. Closely related to this are the so-called Pogorelov maps, \cite[Chapter 5]{Pog73}. They associate to an isometric infinitesimal deformation of a surface $M \subset \R^3$ an isometric infinitesimal deformation of the surface $M_\H \subset \H^3$ obtained by taking an open ball $B \supset M$ and interpreting it as the Klein model of $\H^3$. As a result, $M_\H$ is infinitesimally rigid if and only if $M$ is. The same holds with $M_\H$ replaced by $M_\Sph \subset \Sph^3$ defined as the inverse gnomonic projection of $M$. Volkov in \cite{Vol74} gives a unified treatment of these two results.

A surface $M \subset \R^3$ is convex if and only if $M_\H$ (respectively, $M_\Sph$) is convex. Thus infinitesimal rigidity of closed strictly convex surfaces in the hyperbolic (respectively, spherical) space follows from the rigidity in the Euclidean space.
\end{rem}

\section{Relation to Minkowski and Weyl problems}
\subsection{A brief overview}
The following problem was posed by Weyl in \cite{Weyl16}.
\begin{weylpr}
Let $g$ be a Riemannian metric on $\Sph^2$ with everywhere positive Gauss curvature. Show that there exists a smooth convex embedding $\phi \colon \Sph^2 \to \R^3$ such that $\phi^*(\can_{\R^3}) = g$. Show that this embedding is unique up to an isometry of $\R^3$.
\end{weylpr}
Weyl outlined a proof for the analytic case; it was accomplished later by H.~Lewy \cite{Lewy38a}. Nirenberg \cite{Nir53} extended Weyl's method to certain finite differentiability classes.

A.~D.~Alexandrov \cite{Ale42, Ale05} stated and proved a polyhedral analog of the Weyl problem. In this case $g$ is a Euclidean metric with cone points of angles less than $2\pi$, and $(\Sph^2, g)$ must be embedded as a convex polyhedron. By approximating a Riemannian metric with polyhedral ones, Alexandrov showed that $(\Sph^2, g)$ from the Weyl problem can be embedded isometrically into $\R^3$ as a convex surface, but he hasn't shown that the embedding is smooth. Pogorelov filled this gap in \cite{Pog51}, again for some finite differentiability classes, and strengthened his results in later works.

\begin{minkpr}
Let $K \colon \Sph^2 \to (0, +\infty)$ be a smooth function such that
$$
\int\limits_{\Sph^2} K(x) \, x \, \darea_{\Sph^2} = 0,
$$
where $\Sph^2 = \{ x \in \R^3 \, | \, \|x\| = 1 \}$. Show that there exists a smooth convex embedding $\psi \colon \Sph^2 \to \R^3$ such that $K(x)$ is the Gauss curvature of the surface $N = \psi(\Sph^2)$ at the point $\psi(x)$, and $x$ is the outward unit normal to $N$ at $\psi(x)$. Show that the embedding $\psi$ is unique up to a parallel translation.
\end{minkpr}
The problem that was stated and proved by Minkowski in \cite{Min03} is different: he assumed only continuity of $K$ and wanted to prove the existence of a convex surface whose curvature measure has density $K$, the curvature measure being defined as the measure of the Gauss image pulled back to $N$. Minkowski first proved an analog for convex polyhedra and then used polyhedral approximation. Later, the argument was extended to arbitrary measures, not necessarily having a positive continuous density, \cite{Ale38III}, \cite[Section 7.1]{Scn93}. Pogorelov \cite{Pog52} proved that if $K$ is smooth then the corresponding surface is also smooth and thus has Gauss curvature $K$. This settled the Minkowski problem in the formulation given above.

In a different line of research, H.~Lewy \cite{Lewy38b} solved the Minkowski problem for analytic metrics by adapting the method suggested by Weyl for solution of his problem, and Nirenberg \cite{Nir53} extended this to the smooth case.

\subsection{Uniqueness in the Minkowski and Weyl problems}
\label{subsec:MWUniq}
The uniqueness part in both Minkowski and Weyl problems is essentially easier than the existence part. For the former, it was proved by Minkowski himself using his mixed volumes theory. Minkowski's ideas were developed by Alexandrov, who gave another uniqueness proof using mixed determinants, \cite{Ale38IV}. The uniqueness in the Weyl problem was proved by Herglotz in \cite{Her43a}; this proof is reproduced in \cite{Hopf89}.

A proof similar to that of Alexandrov was found by Chern \cite{Che57} and, independently, by Hsiung \cite{Hsiu58}. The main difference is that they argued in terms of the position vector of an embedding while Alexandrov argued in terms of the support function. The proof by Chern and Hsiung is reproduced in \cite[Chapter 12]{SpiV}. One should also mention a paper \cite{Che59} of Chern, where he generalizes his arguments and establishes a connection with Alexandrov's work.

Quite curiously, the ideas came full circle. Hsiung' work was motivated by that of Herglotz. Herglotz refers to Blaschke's proof of the infinitesimal rigidity of convex surfaces. And Blaschke's proof was inspired by Hilbert's work \cite{Hil10} related to the Minkowski problem, see discussion at the end of our Subsection \ref{subsec:ShearBend}.

Our proof of the infinitesimal Gauss rigidity in Subsection \ref{subsec:ProofGaussRig} is related to Chern-Hsiung's proof of the Minkowski uniqueness in the same way as Blaschke's proof of the infinitesimal metric rigidity is related to Herglotz's proof of the Weyl uniqueness. However, we came to our proof by ``smoothing'' the polyhedral analog, \cite{Izm11a}.

\subsection{Existence in the Minkowski and Weyl problems}
Weyl, Lewy, and Nirenberg solved the Minkowski and Weyl problems using the continuity method. For example, in the case of Weyl's problem, they consider a family of metrics $\{g_t \, | \, t \in [0,1]\}$ on $\Sph^2$ such that $g_1 = g$ and $g_0$ is the standard metric of curvature $1$. For $t=0$ there is an isometric embedding $(\Sph^2, g_t) \to \R^3$, and one wants to show that the set of all $t$ for which this is the case is an open and closed subset of $[0,1]$. The openness follows from the ellipticity of a certain operator (which is, in fact, related to infinitesimal rigidity), and closedness follows from a priori estimates.

Alexandrov's solution of the polyhedral Weyl problem also uses a variant of the continuity method. At the same time, Minkowski proved the polyhedral Minkowski theorem in a different, quite elegant way. On the space of convex polyhedra with given directions of outward normals, he maximized the volume function under a certain linear constraint and showed that the maximum point is the desired polyhedron up to a scaling. Alexandrov \cite{Ale38III} extended this method to arbitrary curvature measures.

A variational approach to the Weyl problem was suggested by Blaschke and Herglotz in \cite{BH37}. Their idea was to consider it not as an embedding problem $(\Sph^2, g) \to (\R^3, \can)$, but as an extension problem: given a ball $\Ball^3$, a Riemannian metric $g$ on $\Sph^2 = \partial\Ball^3$ must be extended to a flat metric on $\Ball^3$. On the space of all extensions $\widetilde g$ they consider the Hilbert-Einstein functional and show that its critical points are exactly the flat metrics. Thus the Weyl problem reduces to showing that the Hilbert-Einstein functional has exactly one critical point. This idea was never implemented.

\subsection{An approach to the Minkowski and Weyl problems}
\label{subsec:NewMinkWeyl}
Section \ref{sec:RigHE} of the present paper suggests a modification of Blaschke-Herglotz's approach: instead of considering all Riemannian metrics $\wg$ on $\Ball^3$ that extend a given metric $g$ on $\Sph^2$, one considers only warped products $\wg_r$ of the form \eqref{eqn:TildeGR}. The metric $\wg_r$ is determined by a function $r \colon \Sph^2 \to \R$ and determines a function $\sec \colon \Sph^2 \to \R$ such that $\wg_r$ is flat if and only if $\sec \equiv 0$. We have shown in Section \ref{sec:RigHE} that the kernel of the map $\dot r \mapsto \sec^\bigdot$ (the linearization of $r \mapsto \sec$) at $r$ such that $\wg_r$ is flat has dimension three.

\begin{conj}
The map $\dot r \mapsto \sec^\bigdot$ is onto provided that
\begin{equation}
\label{eqn:BoundSec}
0 < \sec < K,
\end{equation}
where $K \colon \Sph^2 \to \R$ is the Gauss curvature of the metric $g$.
\end{conj}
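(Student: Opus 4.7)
The plan is to prove surjectivity of $L \colon \dot r \mapsto \sec^\bigdot$ from $C^\infty(\Sph^2)$ to itself via the Fredholm alternative on the closed manifold $\Sph^2$. The argument splits into three steps: ellipticity of $L$, formal self-adjointness of $L$, and triviality of $\ker L$.

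For ellipticity, I would use the fact that both the metric $g$ on $M_r$ and its Gauss curvature $K$ are independent of $r$ in the warped product setup of Section \ref{sec:RigHE}. The Gauss equation from Theorem \ref{thm:HE} then yields
$$\sec^\bigdot = -(\det B)^\bigdot = -2\det(B, \dot B).$$
The assignment $\dot r \mapsto \dot B$ is a second-order linear differential operator on $\Sph^2$ whose principal symbol can be extracted from the formulas of Section \ref{sec:RigHE} and Lemma \ref{lem:HessFShape}. Composing with the linear functional $-2\det(B, \cdot)$ on $\End(TM_r)$, which is definite on symmetric endomorphisms exactly when $B$ is positive-definite, produces a definite scalar quadratic form in $\xi \in T^*\Sph^2$. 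Positive-definiteness of $B$ is equivalent to $\det B > 0$, that is to $\sec < K$, so $L$ is elliptic throughout the open region where this inequality holds.

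For self-adjointness, Theorem \ref{thm:HEDot} gives $\HE^\bigdot = \int_{M_r} \dot r\,(\sec/\cos\alpha)\,\darea$. Symmetry of the Hessian $\HE^{\bigdot\bigdot}$ in $\dot r$ forces the linearization $\widetilde L \colon \dot r \mapsto (\sec/\cos\alpha)^\bigdot$ to be formally self-adjoint with respect to $\darea$. Since $L$ and $\cos\alpha \cdot \widetilde L$ share their principal symbol, differing only by a first-order term of the form $\sec\cdot(\cos\alpha)^\bigdot$, and since $\cos\alpha > 0$, the operator $L$ is formally self-adjoint with respect to the measure $\cos\alpha\,\darea$ modulo a genuinely lower-order perturbation. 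The Fredholm alternative for elliptic operators on $\Sph^2$ then identifies the cokernel of $L$ with $\ker L$.

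It remains to show $\ker L = 0$ under $0 < \sec < K$, and this is the main obstacle. If $\sec^\bigdot = 0$, then by the first formula of Theorem \ref{thm:HEDDot} we have $\HE^{\bigdot\bigdot}(\dot r, \dot r) = 0$, while the second formula gives
$$\int_{M_r} 2h \det \dot B \,\darea + \int_{M_r} r (\cos\alpha)^{\bigdot\bigdot} \sec \,\darea = 0.$$
The first integrand is non-positive by Corollary \ref{cor:DetDotBNeg} (using $h > 0$), so when $\sec = 0$ this immediately forces $\dot B = 0$ and reduces to the rigidity argument of Subsection \ref{subsec:HEDDot}. For $\sec > 0$ the second term is a true obstruction, and the whole weight of the conjecture rests on showing that an expansion of $(\cos\alpha)^{\bigdot\bigdot}$ in $\dot r$ and $\nabla \dot r$, combined with an integration by parts, recasts this term into a form controlled by the first integral, producing a weighted coercivity estimate in which the strict bounds $0 < \sec < K$ play a quantitative role. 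Once $\dot B = 0$ is extracted, triviality of $\dot r$ should follow by a Bonnet-type argument applied to the first and second fundamental forms of $M_r$ inside $(\R_+ \times \Sph^2, \wg_r)$, as at the end of the proof of Theorem \ref{thm:InfRigSm}; for generic $r$ with $\sec > 0$ the warped product has no continuous ambient symmetries, so $\dot r$ vanishes outright. The polyhedral analog \cite[Lemma 4.1]{Izm11a} gives both the expected form of the coercivity estimate and a template for the computation.
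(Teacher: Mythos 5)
You should know at the outset that the statement you are trying to prove is presented in the paper as an open \emph{conjecture} (Subsection \ref{subsec:NewMinkWeyl}); the author offers no proof, only the parenthetical remark that $\sec < K$ is equivalent to $\det B > 0$ and a pointer to the polyhedral realization of the surrounding programme in \cite{BI08}. So there is no argument in the paper to compare yours against, and a complete proof here would be new mathematics rather than a reconstruction.

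Your proposal is not yet that proof: it is a strategy whose decisive step you explicitly leave open. The framework is reasonable and consistent with what the paper does establish: the self-adjointness of the linearization of $r \mapsto \sec/\cos\alpha$ is noted just before Lemma \ref{lem:KerEqual}, and extracting the principal symbol of $\dot r \mapsto \dot B$ from $h\dot B = -\Hess \dot f - \dot h B$ does yield a definite symbol precisely where $\det B > 0$, i.e. where $\sec < K$, so ellipticity and vanishing index are plausible. But surjectivity then reduces entirely to $\ker L = 0$ under $0 < \sec < K$, and there your argument stops. The term $\int_{M_r} r(\cos\alpha)^{\bigdot\bigdot}\sec\,\darea$ in the second formula of Theorem \ref{thm:HEDDot} is exactly the obstruction; it has no sign, and you offer only the hope that an unspecified integration by parts will let the non-positive term $\int_{M_r} 2h\det\dot B\,\darea$ dominate it. Note that the hypothesis $\sec > 0$ cannot be soft-pedalled: at $\sec = 0$ the kernel is three-dimensional by Theorem \ref{thm:RedToR}, so any coercivity estimate must use the strict positivity quantitatively, and your closing appeal to ``generic $r$'' having no ambient symmetries does not meet the conjecture, which asserts surjectivity for \emph{every} $r$ satisfying \eqref{eqn:BoundSec}. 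A smaller imprecision: the Fredholm alternative identifies the cokernel of $L$ with $\ker L^*$, not with $\ker L$; since $L$ differs from a formally self-adjoint operator by the genuinely first-order term $\frac{\sec}{\cos\alpha}(\cos\alpha)^{\bigdot}$, you should phrase the reduction to injectivity via the homotopy invariance of the index rather than via self-adjointness of $L$ itself.
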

(Note that $\sec < K$ implies $\det B > 0$, where $B$ is the shape operator on $\Sph^2$ embedded in the warped product.)

If this conjecture is true, then under assumption \eqref{eqn:BoundSec} every small perturbation of $\sec$ can be achieved by an appropriate modification of $r$.

In order to achieve $\sec = 0$, put $r_0 \equiv R$ for a sufficiently large $R \in \R$. Then the corresponding $\sec_0$ satisfies \eqref{eqn:BoundSec}. Construct a family of functions $\{r_t \, |\, t \in [0,1) \}$ by requiring
\begin{equation}
\sec_t = (1-t) \sec_0.
\end{equation}
One needs some a priori estimates in order to show that the set of $t \in [0,1)$ for which $r_t$ exists is closed and that the limit at $t \to 1$ is a smooth function. This would prove the existence in the Weyl problem.

In the polyhedral case, the above approach was realized in \cite{BI08}. A different modification of the Blaschke-Herglotz approach in the polyhedral case was found and realized by Alexandrov's student Volkov in \cite{Vol55}.

A similar approach to the existence part of the Minkowski problem consists in choosing an arbitrary function $h_0 \colon \Sph^2 \to \R$ and constructing a family of functions $\{h_t \, |\, t \in [0,1] \}$ such that $K_t^{-1} - K^{-1} = (1-t)(K_0^{-1} - K^{-1})$, where $K_t$ is the Gauss curvature of the surface with support function $h_t$.

In \cite{ChW00}, the curvature flow $\dot h = - \log(K_t/K)$ is considered. It is shown that, depending on the initial data, the solution either shrinks to a point or expands to infinity or converges to a smooth surface with Gauss curvature~$K$. The advantage of this approach is the expliciteness of the evolution equation, while our approach would give convergence to the solution of the Minkowski problem independently of the initial data.

\section{Directions for future research}
\label{sec:Directions}
\begin{prb}
What is the analog of Darboux wreath (Subsection \ref{subsec:DarbouxWreath}) for infinitesimal deformations of surfaces in the sphere and in the hyperbolic-de Sitter space?
\end{prb}

\begin{prb}
Find new proofs of the existence part in the Minkowski and Weyl problems based on the approach outlined in Subsection \ref{subsec:NewMinkWeyl}.
\end{prb}

Koiso in \cite{Koi78} proved infinitesimal rigidity of Einstein manifolds under certain restrictions on the curvature by studying the second variation of the Hilbert-Einstein functional. Our approach to the infinitesimal rigidity of convex surfaces in Section \ref{sec:RigHE} has some similar features.

\begin{prb}
\label{prb:KoisoBdry}
Prove infinitesimal rigidity of an Einstein manifold $M$ with convex boundary by unifying Koiso's and our arguments.

Koiso used certain Weitzenb\"ock-type formulas for $\HE^{\bigdot\bigdot}$. In the case with boundary an integral over $\partial M$ should appear. Hopefully it can be identified
as $\int_{\partial M} f \det \dot B \, \dvol$ for some positive function $f$.
\end{prb}

Koiso's method works in particular for compact closed hyperbolic manifods (whose infinitesimal rigidity was previously proved by Calabi \cite{Cal61} and Weil \cite{Wei60}). Thus a special case of Problem \ref{prb:KoisoBdry} is to give a new proof of Schlenker's theorem \cite{Scl06} on the infinitesimal rigidity of compact hyperbolic manifolds with convex boundary. Schlenker proves also Gauss infinitesimal rigidity of such manifolds. It would also be interesting to find a variational proof of this theorem, cf. Subsection \ref{subsec:ApprSphere}.

Similar problems for manifolds with convex polyhedral boundary are not solved, although infinitesimal rigidity of convex polyhedra can be proved in a similar spirit, cf. \cite{Izm11a}. A challenge would be to find a common generalization from polyhedral and smooth to arbitrary convex surfaces (respectively, manifolds with convex boundary).
 
\begin{prb}
By extending the approach proposed in Problem \ref{prb:KoisoBdry}, prove the infinitesimal rigidity of compact hyperbolic manifolds with convex boundary which is neither smooth nor polyhedral. In particular, prove the infinitesimal rigidity of the convex core of a hyperbolic manifold.
\end{prb}

Infinitesimal rigidity of convex cores would probably allow to prove the uniqueness part in Thurston's pleating lamination conjecture. See \cite{BO04}, where the existence part is proved.

Note that the Schl\"afli formula for convex cores proved by Bonahon \cite{Bon98} should be equivalent to a formula for the first variation of the Hilbert-Einstein functional under arbitrary variations of the metric in the interior of the convex core. More generally, first variations of Lipschitz-Killing curvatures in the case of non-smooth boundary are computed by Bernig in \cite{Ber03}.

\begin{prb}
In the same spirit, reprove Pogorelov's theorem \cite[Chapter IV]{Pog73} on the infinitesimal rigidity of arbitrary convex surfaces without flat pieces by studying variations of the Hilbert-Einstein functional.
\end{prb}

\appendix

\section{Mixed determinants of linear operators and of vector-valued differential forms}
\label{sec:DetForms}
\subsection{Mixed determinants of linear operators}
\label{subsec:MixDetOp}
Let $V$ be an vector space with $\dim V = n$. Then the determinant
$$
\det \colon \End(V) \to \R
$$
is a homogeneous polynomial of degree $n$. Therefore there is a unique polarization of $\det$, that is, a symmetric polylinear form on $\End(V)$ denoted also by $\det$ and such that
$$
\det(A, \ldots, A) = \det A
$$
for all $A \in \End(V)$. See e.~g. \cite[Appendix A]{Hoer07}.

\begin{dfn}
The number $\det(A_1, \ldots, A_n)$ is called the \emph{mixed determinant} of operators $A_1, \ldots, A_n$.
\end{dfn}

For example, by polarizing the determinant of $2 \times 2$--matrices we obtain
$$
\det \left( \begin{pmatrix}
            a_{11} & a_{12}\\
	    a_{21} & a_{22}
            \end{pmatrix},
\begin{pmatrix}
b_{11} & b_{12}\\
b_{21} & b_{22}
\end{pmatrix}
\right)
= \frac12 (a_{11}b_{22} + a_{22}b_{11} - a_{12}b_{21} - a_{21}b_{12}).
$$

\begin{cor}
$$
\det(\id, B) = \frac12 \tr B
$$
\end{cor}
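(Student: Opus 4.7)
The plan is to deduce this identity directly from the polarization formula that has just been exhibited in the $2\times 2$ example, since $\det(\id, B)$ is a mixed determinant of two arguments and hence refers to the case $n = \dim V = 2$.

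Concretely, I would substitute $A = \id$, i.e.\ $a_{11} = a_{22} = 1$ and $a_{12} = a_{21} = 0$, into the explicit polarization formula displayed just above the corollary. The cross terms $a_{12}b_{21}$ and $a_{21}b_{12}$ vanish, and what remains is $\tfrac{1}{2}(b_{22} + b_{11}) = \tfrac{1}{2}\tr B$, which is the claim.

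As a sanity check (and an alternative route that makes the statement basis-free), I would expand $\det(\id + tB)$ in two ways. On one hand, by the defining polarization property,
\[
\det(\id + tB) = \det(\id) + 2t \det(\id, B) + t^2 \det(B) = 1 + 2t \det(\id, B) + t^2 \det B.
\]
On the other hand, the standard expansion in dimension two gives $\det(\id + tB) = 1 + t\tr B + t^2 \det B$. Equating the coefficients of $t$ yields $2\det(\id, B) = \tr B$, again giving the stated formula.

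There is no real obstacle here; the only point to be attentive to is the convention on the polarization (the factor $\tfrac{1}{2}$ comes from the symmetry $\det(\id, B) = \det(B, \id)$, so the cross term in the binomial expansion carries the factor $\binom{2}{1} = 2$).
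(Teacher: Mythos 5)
Your proposal is correct and matches the paper's intent exactly: the corollary is stated immediately after the explicit $2\times 2$ polarization formula, and substituting $A=\id$ into that display is precisely how it follows. The alternative check via expanding $\det(\id+tB)$ is a nice basis-free confirmation but not needed.
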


\begin{lem}
\label{lem:DetSign}
The symmetric bilinear form $\det(\cdot\,, \cdot)$ on the space of $2 \times 2$--matrices has signature $(+,+,-,-)$. The restriction of $\det(\cdot\,, \cdot)$ to the space of symmetric $2 \times 2$--matrices has signature $(+,-,-)$.
\end{lem}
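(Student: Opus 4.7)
My plan is to reduce the computation of the signature of the symmetric bilinear form $\det(\cdot\,,\cdot)$ to the computation of the signature of the associated quadratic form $A \mapsto \det A$, since polarization preserves signature. Then on each of the two spaces (all $2\times 2$ matrices, and symmetric ones) I would diagonalize $\det A$ by an explicit change of coordinates.

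For the full four-dimensional space of $2\times 2$ matrices $A = (a_{ij})$, I would introduce the new linear coordinates
\[
u = a_{11} + a_{22}, \quad v = a_{11} - a_{22}, \quad w = a_{12} + a_{21}, \quad z = a_{12} - a_{21},
\]
and compute
\[
\det A = a_{11}a_{22} - a_{12}a_{21} = \tfrac14\bigl(u^2 - v^2 - w^2 + z^2\bigr),
\]
which immediately gives signature $(+,+,-,-)$.

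For the three-dimensional subspace of symmetric matrices one has $a_{12} = a_{21}$, so $z = 0$ and the quadratic form restricts to $\tfrac14(u^2 - v^2 - 2w^2)$, with signature $(+,-,-)$. Alternatively, one can just pick the basis $E_1 = \bigl(\begin{smallmatrix}1&0\\0&0\end{smallmatrix}\bigr)$, $E_2 = \bigl(\begin{smallmatrix}0&0\\0&1\end{smallmatrix}\bigr)$, $F = \bigl(\begin{smallmatrix}0&1\\1&0\end{smallmatrix}\bigr)$, write down the Gram matrix of $\det(\cdot\,,\cdot)$, and read off the signature from its eigenvalues.

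There is no real obstacle here: once one invokes the standard fact that the signature of a bilinear form equals that of its associated quadratic form, the entire lemma is a one-line change of variables. The only thing to be careful about is the sign conventions in the polarization formula given before Lemma \ref{lem:DetSign}, but these are consistent with the direct computation above.
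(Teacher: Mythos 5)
Your proposal is correct and is essentially the paper's own argument written in coordinates: the change of variables $u,v,w,z$ diagonalizes the quadratic form $\det$ exactly in the orthogonal basis $\bigl(\begin{smallmatrix}1&0\\0&1\end{smallmatrix}\bigr)$, $\bigl(\begin{smallmatrix}1&0\\0&-1\end{smallmatrix}\bigr)$, $\bigl(\begin{smallmatrix}0&1\\1&0\end{smallmatrix}\bigr)$, $\bigl(\begin{smallmatrix}0&1\\-1&0\end{smallmatrix}\bigr)$ that the paper exhibits, the first three spanning the symmetric matrices. (A minor slip: with your $w=a_{12}+a_{21}$ the restriction to symmetric matrices is $\tfrac14(u^2-v^2-w^2)$, not $\tfrac14(u^2-v^2-2w^2)$, but this does not affect the signature $(+,-,-)$.)
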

\begin{proof}
The matrices
$$
\begin{pmatrix}
1 & 0\\
0 & 1
\end{pmatrix}, \quad
\begin{pmatrix}
1 & 0\\
0 & -1
\end{pmatrix}, \quad
\begin{pmatrix}
0 & 1\\
1 & 0
\end{pmatrix}, \quad
\begin{pmatrix}
0 & 1\\
-1 & 0
\end{pmatrix}
$$
form an orthogonal basis for $\det(\cdot\,, \cdot)$, the first three of them spanning the space of symmetric matrices. The first and the fourth matrix have a positive determinant, the second and the third a negative.
\end{proof}

\begin{cor}
\label{cor:AB}
Let $A, B$ be symmetric $2 \times 2$--matrices such that
\begin{equation}
\label{eqn:ABOrth}
\det B > 0, \quad \det(A, B) = 0.
\end{equation}
Then $\det A \le 0$. Besides, the equality $\det A = 0$ occurs only if $A = 0$.
\end{cor}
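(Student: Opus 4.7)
The plan is to deduce this corollary directly from Lemma \ref{lem:DetSign}, viewing the mixed determinant as a nondegenerate symmetric bilinear form on the three-dimensional space $\Sym_2$ of symmetric $2\times 2$ matrices, of signature $(+,-,-)$.

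First I would observe that the hypothesis $\det B > 0$ means that $B$ is a \emph{positive} vector for the form $\det(\cdot\,,\cdot)$ on $\Sym_2$. The orthogonal complement $B^\perp \subset \Sym_2$ is a two-dimensional subspace, and by Sylvester's law of inertia the restriction of the form to $B^\perp$ inherits the two negative directions of the ambient $(+,-,-)$ signature. Thus $\det(\cdot\,,\cdot)|_{B^\perp}$ is negative definite.

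Since the assumption $\det(A,B) = 0$ says precisely $A \in B^\perp$, we obtain $\det A = \det(A,A) \le 0$, with equality only when $A = 0$ by negative definiteness. This is exactly the statement of the corollary.

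I do not expect any real obstacle here: the argument is a one-line consequence of Lemma \ref{lem:DetSign} combined with the standard fact that the orthogonal complement of a positive vector in a form of signature $(+,-,\ldots,-)$ is negative definite. The only thing to double-check is that $\det(\cdot\,,\cdot)$ is genuinely nondegenerate on $\Sym_2$ (which is immediate from the explicit orthogonal basis exhibited in the proof of Lemma \ref{lem:DetSign}), so that $\dim B^\perp = 2$ and the signature argument applies.
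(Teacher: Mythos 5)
Your proposal is correct and follows essentially the same route as the paper: the paper likewise invokes the signature $(+,-,-)$ from Lemma \ref{lem:DetSign} to conclude that $\det(\cdot\,,\cdot)$ is negative definite on the orthogonal complement of the positive vector $B$, whence $\det A = \det(A,A) \le 0$ with equality only for $A = 0$. Your extra remarks about nondegeneracy and Sylvester's law just make explicit what the paper leaves implicit.
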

\begin{proof}
Assumptions \eqref{eqn:ABOrth} mean that $B$ is a positive vector for the symmetric bilinear form $\det(\cdot\,, \cdot)$ and that $A$ is orthogonal to $B$. By the second part of Lemma \ref{lem:DetSign}, $\det(\cdot\,, \cdot)$ is negative definite on $B^\perp$. Hence $\det A \le 0$, and the determinant vanishes only if the matrix $A$ vanishes.
\end{proof}

\begin{lem}
\label{lem:DerDet}
Let $B_t \in \End(V)$ be a differentiable family of linear operators. Denote $B = B_0$ and $\dot B = \left.\frac{d}{dt}\right|_{t=0} B_t$. Then
$$
(\det B)^\bigdot = 2 \det(\dot B, B),
$$
where $(\det B)^\bigdot = \left.\frac{d}{dt}\right|_{t=0} (\det B_t).$
\end{lem}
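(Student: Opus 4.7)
The plan is to unfold the definition of the mixed determinant (the polarization of $\det$) and apply multilinearity and symmetry to a Taylor expansion of $B_t$. Since the polarization $\det(\cdot, \cdot)$ appearing in the statement takes two arguments, this is the case $\dim V = 2$ (the analogous statement for general $n$ would read $(\det B)^\bigdot = n \det(\dot B, B, \ldots, B)$ with $n-1$ copies of $B$, and the proof goes through verbatim).

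First I would write $B_t = B + t\dot B + o(t)$ and compute
\[
\det B_t = \det(B_t, B_t) = \det(B + t\dot B, B + t\dot B) + o(t).
\]
Expanding the right-hand side by bilinearity and using the symmetry $\det(A_1, A_2) = \det(A_2, A_1)$ of the polarization yields
\[
\det B_t = \det(B, B) + 2t \det(\dot B, B) + t^2 \det(\dot B, \dot B) + o(t).
\]
Since $\det(B, B) = \det B$, differentiating at $t = 0$ produces $(\det B)^\bigdot = 2\det(\dot B, B)$.

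The only thing to check is that the polarization identity $\det B_t = \det(B_t, B_t)$ is applied with the correct normalization; this is exactly how the mixed determinant was defined in Subsection \ref{subsec:MixDetOp}, so no additional combinatorial factor appears. There is essentially no obstacle here — the argument is a one-line application of the defining property of the polarization, together with bilinearity and symmetry — which is presumably why the lemma is relegated to the appendix.
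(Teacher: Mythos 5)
Your argument is correct and is exactly what the paper intends by its one-line proof (``follows easily from bilinearity and symmetry of $\det(\cdot\,,\cdot)$''): expand $\det B_t = \det(B_t, B_t)$ via the polarization and differentiate. Your remark that the two-argument form implicitly places the lemma in the case $\dim V = 2$, with the obvious generalization $(\det B)^\bigdot = n \det(\dot B, B, \ldots, B)$, is accurate and consistent with how the lemma is used in the body of the paper.
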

\begin{proof}
Follows easily from bilinearity and symmetry of $\det(\cdot\,, \cdot)$.
\end{proof}

\begin{cor}
\label{cor:DetDotBNeg}
Let $B_t$ be a differentiable family of symmetric $2 \times 2$--matrices, $B_0 = B$. Assume that
$$
\det B > 0, \quad (\det B)^\bigdot = 0.
$$
Then $\det \dot B \le 0$.

Besides, the equality $\det \dot B = 0$ occurs only if $\dot B = 0$.
\end{cor}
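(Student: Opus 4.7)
The plan is to derive this corollary as an immediate consequence of the two algebraic facts already proved in this appendix, namely Lemma \ref{lem:DerDet} and Corollary \ref{cor:AB}. First I would observe that since each $B_t$ is a symmetric $2 \times 2$--matrix, the derivative $\dot B = \lim_{t \to 0} (B_t - B)/t$ is again a symmetric $2 \times 2$--matrix (the space of symmetric matrices is a closed linear subspace). Thus $\dot B$ is an admissible input for Corollary \ref{cor:AB}.

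Next I would apply Lemma \ref{lem:DerDet}, which gives
$$
0 = (\det B)^\bigdot = 2 \det(\dot B, B).
$$
Combined with the hypothesis $\det B > 0$, this puts the pair $(A, B) = (\dot B, B)$ exactly into the setting of Corollary \ref{cor:AB}, namely $\det B > 0$ and $\det(\dot B, B) = 0$. That corollary then yields both $\det \dot B \le 0$ and the implication $\det \dot B = 0 \Rightarrow \dot B = 0$, which are precisely the two conclusions claimed.

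There is really no obstacle here; the content of the statement has been entirely absorbed into Lemma \ref{lem:DetSign} (signature of the mixed determinant on symmetric $2\times 2$--matrices) and the polarization identity of Lemma \ref{lem:DerDet}. The only small point deserving a mention is the symmetry of $\dot B$, which is the only fact that uses the hypothesis that all $B_t$ (and not just $B_0$) are symmetric. Accordingly, the write-up should consist of at most two sentences invoking the two previous results in sequence.
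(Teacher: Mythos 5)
Your proposal is correct and is exactly the paper's own proof: the paper's argument reads ``Use Lemma \ref{lem:DerDet} and Corollary \ref{cor:AB} for $A = \dot B$,'' which is precisely your two-step reduction. Your remark that the symmetry of $\dot B$ follows from the symmetry of all $B_t$ is a sensible small addition but does not change the argument.
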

\begin{proof}
Use Lemma \ref{lem:DerDet} and Corollary \ref{cor:AB} for $A = \dot B$.
\end{proof}

\subsection{Mixed determinants and tensor algebra}
\label{subsec:MixDetTens}
Recall the definitions of the alternating and symmetrizing operators on the $k$-th tensor power $\bigotimes^k V$ of a vector space $V$:
$$
\Alt(v_1 \otimes \cdots \otimes v_k) := \frac{1}{k!} \sum_{\sigma \in S_k} \sgn \sigma \cdot v_{\sigma(1)} \otimes \cdots \otimes v_{\sigma(k)},
$$
$$
\Sym(v_1 \otimes \cdots \otimes v_k) := \frac{1}{k!} \sum_{\sigma \in S_k} v_{\sigma(1)} \otimes \cdots \otimes v_{\sigma(k)}.
$$

The image of $\Alt$ is denoted by $\bigwedge^k V \subset \bigotimes^k V$ and consists of linear combinations of multivectors
$$
v_1 \wedge \cdots \wedge v_k := k! \Alt(v_1 \otimes \cdots \otimes v_k) = \sum_{\sigma \in S_k} \sgn \sigma \cdot v_{\sigma(1)} \otimes \cdots \otimes v_{\sigma(k)}.
$$

Let $A_1, \ldots, A_k \in \End(V) \cong V \otimes V^*$ be a collection of linear operators. Their tensor product acts naturally as a linear operator on the $k$--th tensor power of $V$:
\begin{equation}
\label{eqn:TensProdOp}
\bigotimes_{i=1}^k A_i \in {\textstyle\bigotimes}^k (V \otimes V^*) \cong \left( {\textstyle\bigotimes}^k V \right) \otimes \left( {\textstyle\bigotimes}^k(V^*) \right) \cong \End \left( {\textstyle\bigotimes}^k V \right),
\end{equation}
because we have $\bigotimes^k (V^*) \cong \left( \bigotimes^k V \right)^*$. Explicitly,
$$
\left( \bigotimes_{i=1}^k A_i \right) (v_1 \otimes \cdots \otimes v_k) := A_1(v_1) \otimes \cdots \otimes A_k(v_k).
$$
If $A_i = A$ for all $i$, then the operator $\bigotimes^k A$ commutes with $\Alt$:
\begin{eqnarray*}
\left( {\textstyle\bigotimes}^k A \right) (\Alt(v_1 \otimes \cdots \otimes v_k)) & = & \frac{1}{k!} \sum_{\sigma \in S_k} \sgn\sigma \cdot A(v_{\sigma(1)}) \otimes \cdots \otimes A(v_{\sigma(k)})\\
& = & \Alt \left( \left( {\textstyle\bigotimes}^k A \right) (v_1 \otimes \cdots \otimes v_k) \right).
\end{eqnarray*}
Consequently,
$$
\left( {\textstyle\bigotimes}^k A \right) (v_1 \wedge \cdots \wedge v_k) = A(v_1) \wedge \cdots \wedge A(v_k),
$$
so that the operator $\bigotimes^k A$ maps $\bigwedge^k V$ to itself. In particular, if $\dim V = n$, then $ \dim {\textstyle\bigwedge}^n V = 1$ and $\bigotimes^n A \big|_{{\scriptstyle\bigwedge}^n V}$ is a multiplication by a scalar. It is well-known that this scalar is the determinant of $A$:
\begin{equation}
\label{eqn:DetATens}{\textstyle\bigotimes}^n A \big|_{{\scriptstyle\bigwedge}^n V} = \det A \cdot \id.
\end{equation}

Now let's come back to the general case \eqref{eqn:TensProdOp}. The operator $\bigotimes_{i=1}^k A_k$ in general does not commute with the alternating operator. Instead, consider the symmetrized tensor product
$$
\Sym\bigotimes_{i=1}^k A_i = \frac{1}{k!} \sum_{\sigma \in S_k} \bigotimes_{i=1}^k A_{\sigma(i)},
$$
that is the image of $\bigotimes_{i=1}^k A_i$ under the symmetrizing operator
$$
\Sym \colon {\textstyle\bigotimes}^k (V \otimes V^*) \to {\textstyle\bigotimes}^k (V \otimes V^*).
$$
A simple computation shows that $\Sym\bigotimes_{i=1}^k A_i$ commutes with the alternating operator on $\bigotimes^k V$ and acts on $\bigwedge^k V$ as
\begin{eqnarray*}
\left( \Sym\bigotimes_{i=1}^k A_i \right) (v_1 \wedge \cdots \wedge v_k) & = & \frac{1}{k!} \sum_{\sigma, \tau \in S_k} \left( \sgn\tau \cdot \bigotimes_{i=1}^k A_{\sigma(i)}(v_{\tau(i)}) \right)\\
& = & \frac{1}{k!} \sum_{\sigma \in S_k} A_{\sigma(1)}(v_1) \wedge \cdots \wedge A_{\sigma(k)}(v_k).
\end{eqnarray*}
For $k = n = \dim V$ we have the following lemma that can serve as an alternative definition of the mixed determinant.

\begin{lem}
\label{lem:MixDetTens}
If $\dim V = n$ and $A_1, \ldots, A_n \in \End(V)$, then
\begin{equation}
\label{eqn:SymAMixDet}
\left( \Sym\bigotimes_{i=1}^n A_i \right) \Bigg|_{{\scriptstyle\bigwedge}^n V} = \det(A_1, \ldots, A_n) \cdot \id.
\end{equation}
\end{lem}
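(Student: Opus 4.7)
The plan is to show that the left-hand side of \eqref{eqn:SymAMixDet} defines a symmetric multilinear form in $(A_1,\ldots,A_n) \in \End(V)^n$ that specializes to $\det A$ on the diagonal, and then invoke the uniqueness of the polarization of $\det$ to identify it with the mixed determinant.

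First I would observe that, since $\dim \bigwedge^n V = 1$, any endomorphism of $\bigwedge^n V$ is a scalar multiple of the identity. So there is a well-defined function
\[
f \colon \End(V)^n \to \R, \qquad \left( \Sym \bigotimes_{i=1}^n A_i \right)\bigg|_{{\scriptstyle\bigwedge}^n V} = f(A_1,\ldots,A_n) \cdot \id.
\]
Next I would verify three properties of $f$. Multilinearity is immediate: the tensor product $A_1 \otimes \cdots \otimes A_n$ is multilinear in the $A_i$ by construction, symmetrization and restriction to $\bigwedge^n V$ are linear operations, and reading off the scalar is linear. Symmetry in the arguments $A_i$ is built into $f$ by the explicit averaging
\[
\Sym \bigotimes_{i=1}^n A_i = \frac{1}{n!} \sum_{\sigma \in S_n} \bigotimes_{i=1}^n A_{\sigma(i)},
\]
which is manifestly invariant under permutations of $(A_1,\ldots,A_n)$.

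The third property, namely $f(A,\ldots,A) = \det A$, is exactly the content of the already established equation \eqref{eqn:DetATens}: when all $A_i$ coincide with $A$, the symmetrization has no effect and $\Sym \bigotimes_{i=1}^n A = \bigotimes^n A$, which restricts to multiplication by $\det A$ on $\bigwedge^n V$. Hence $f$ is a symmetric multilinear form on $\End(V)$ whose restriction to the diagonal is the homogeneous polynomial $\det$ of degree $n$. By the uniqueness of polarization of a homogeneous polynomial (invoked at the beginning of Subsection \ref{subsec:MixDetOp}), we conclude $f(A_1,\ldots,A_n) = \det(A_1,\ldots,A_n)$, which is precisely \eqref{eqn:SymAMixDet}.

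I do not expect any real obstacle: the only subtlety is keeping the bookkeeping straight between the operator $\Sym \bigotimes_i A_i$ acting on all of $\bigotimes^n V$ and its restriction to the alternating subspace. This is handled by the standard computation, already carried out in the paragraph preceding the lemma, showing that $\Sym \bigotimes_i A_i$ commutes with $\Alt$ and so preserves $\bigwedge^n V$; after that, one-dimensionality of $\bigwedge^n V$ and the uniqueness of the polarization close the argument.
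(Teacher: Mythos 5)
Your proposal is correct and follows exactly the paper's argument: both verify that the left-hand side is symmetric and multilinear in the $A_i$, reduces to $\det A$ on the diagonal by \eqref{eqn:DetATens}, and then invoke uniqueness of the polarization. The only difference is that you spell out the bookkeeping (well-definedness of the scalar $f$ via $\dim\bigwedge^n V = 1$ and the commutation with $\Alt$) that the paper delegates to the preceding paragraph.
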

\begin{proof}
If $A_i = A$ for all $i$, then we have $\Sym \bigotimes^n A = \bigotimes^n A$, and \eqref{eqn:SymAMixDet} turns into \eqref{eqn:DetATens}. Both sides in \eqref{eqn:SymAMixDet} are symmetric and polylinear in $(A_i)$: the left hand side by construction, the right hand side by definition. Therefore the equality holds for all $(A_i)$.
\end{proof}

\begin{rem}
\label{rem:SymAlt}
Instead of \emph{applying} symmetrization \emph{to} $\bigotimes_{i=1}^k A_i$, one can \emph{compose} alternation \emph{with} $\bigotimes_{i=1}^k A_i$. This gives the same operator when restricted to $\bigwedge^k V$:
\begin{equation}
\label{eqn:SymAlt}
\left. \left( \Alt \circ \bigotimes_{i=1}^k A_i \right) \right|_{{\scriptstyle\bigwedge}^k V} = \left. \left( \Sym\bigotimes_{i=1}^k A_i \right) \right|_{{\scriptstyle\bigwedge}^k V}.
\end{equation}
\end{rem}

\subsection{Vector-valued forms}
\label{subsec:MixDetVVal}
Let $V$ and $W$ be vector spaces.

\begin{dfn}
\label{dfn:WedgProd}
A $V$--valued $k$--form on $W$ is an element of $V \otimes \bigwedge^k W^*$. For two $V$--valued forms $\omega \in V \otimes {\textstyle\bigwedge}^k W^*$ and $\eta \in V \otimes {\textstyle\bigwedge}^l W^*$, their wedge product
$$
\omega \wedge \eta \in (V \otimes V) \otimes {\textstyle\bigwedge}^{k+l} W^*
$$
is defined as the image of $\omega \otimes \eta$ under the map
$$
\id \otimes \, \wedge \colon (V \otimes V) \otimes \left( {\textstyle\bigwedge}^k W^* \otimes {\textstyle\bigwedge}^l W^* \right) \to (V \otimes V) \otimes {\textstyle\bigwedge}^{k+l} W^*
$$
that takes the wedge product of $\bigwedge W^*$--components.
\end{dfn}

The wedge product of several $V$--valued forms is defined in a similar way.

Let $n = \dim V$, and let $\dvol \in \bigwedge^n (V^*)$ be a distinguished element, a volume form. 

\begin{dfn}
\label{dfn:DVolForm}
Let $\omega \in \left(\bigotimes^n V \right) \otimes \bigwedge^k W^*$ be a $\left( \bigotimes^n V \right)$--valued $k$--form on~$W$. Define a $k$--form
$$
\dvol(\omega) \in {\textstyle\bigwedge}^k W^*
$$
as the image of $\omega$ under the linear map
$$
\dvol \otimes \id \colon \Big( {\textstyle\bigotimes}^n V \Big) \otimes \left( {\textstyle\bigwedge}^k W^* \right) \to {\textstyle\bigwedge}^k W^*.
$$
\end{dfn}

In particular, for $\omega_i \in V \otimes \bigwedge^{k_i} W$, $i = 1, \ldots, n$ we have a $k$--form
$$
\dvol(\omega_1 \wedge \cdots \wedge \omega_n) \in {\textstyle\bigwedge}^k W,
$$
where $k = \sum_{i=1}^n k_i$. In a special case $V = W, k_i =1$ this construction is related to the mixed determinant of linear operators. Indeed, each
$$
\omega_i \in V \otimes {\textstyle\bigwedge}^1 V \cong V \otimes V^*.
$$
is a linear operator on $V$, and we have the following lemma.

\begin{lem}
\label{lem:DvolDet}
Let $\omega_i \in V \otimes V^*$, $i = 1, \ldots, n$, be $V$--valued 1--forms on an $n$--dimensional vector space $V$. Then
\begin{equation}
\label{eqn:DvolDet}
\dvol(\omega_1 \wedge \cdots \wedge \omega_n) = n! \det(\omega_1, \ldots, \omega_n) \cdot \dvol,
\end{equation}
where on the right hand side stands the mixed determinant of linear operators $\omega_i \in \End V$.
\end{lem}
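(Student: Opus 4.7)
The plan is to prove the identity by polarization. First I would verify that both sides of \eqref{eqn:DvolDet}, viewed as functions of $(\omega_1, \ldots, \omega_n) \in (V \otimes V^*)^n$, are $n$-linear and symmetric. Multilinearity is immediate from Definition \ref{dfn:WedgProd} on the left-hand side and from polylinearity of the mixed determinant on the right-hand side; symmetry of the right-hand side holds by definition. For the left-hand side, swapping $\omega_i$ and $\omega_j$ introduces a sign $-1$ in the $\bigwedge^n V^*$-factor of Definition \ref{dfn:WedgProd} (ordinary antisymmetry of a wedge of $1$-forms) and simultaneously transposes the $i$-th and $j$-th tensor slots in the $\bigotimes^n V$-factor; once $\dvol$ is applied, the slot transposition contributes another $-1$ by antisymmetry of $\dvol$, and the two signs cancel.

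Having established symmetry, by the uniqueness of polarization it suffices to verify the identity on the diagonal $\omega_1 = \cdots = \omega_n = A$ for an arbitrary endomorphism $A$. There the right-hand side equals $n! \det(A) \cdot \dvol$. For the left-hand side, I would fix a basis $e_1, \ldots, e_n$ of $V$ and evaluate $A \wedge \cdots \wedge A$ on $(e_1, \ldots, e_n)$. Writing $A = \sum_k a_k \otimes \alpha_k$ as a sum of rank-one tensors, expanding the wedge product according to Definition \ref{dfn:WedgProd}, and collecting terms gives
$$
(A \wedge \cdots \wedge A)(e_1, \ldots, e_n) = \sum_{\sigma \in S_n} \sgn(\sigma)\, A(e_{\sigma(1)}) \otimes \cdots \otimes A(e_{\sigma(n)}).
$$
Applying $\dvol$ term by term and using the defining property $\dvol(Av_1, \ldots, Av_n) = \det(A) \cdot \dvol(v_1, \ldots, v_n)$ together with the antisymmetry of $\dvol$, each summand equals $\sgn(\sigma)^2 \det(A) \cdot \dvol(e_1, \ldots, e_n)$, and the sum collapses to $n!\det(A) \cdot \dvol(e_1, \ldots, e_n)$, matching the right-hand side.

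The only slightly delicate step is the sign bookkeeping needed for the symmetry of the left-hand side, which is precisely the reason why the \emph{symmetric} mixed determinant (rather than some unsymmetrized expression) appears in \eqref{eqn:DvolDet}; once that is in hand, the rest is routine unpacking of definitions. As an alternative route one could pair both sides of Lemma \ref{lem:MixDetTens} with $\dvol$ and use the identity $\dvol(e_1 \wedge \cdots \wedge e_n) = n!\,\dvol(e_1, \ldots, e_n)$ (together with Remark \ref{rem:SymAlt}) to absorb the combinatorial factor; but the polarization approach is the cleanest and avoids distinguishing $\Sym \otimes A_i$ from $A_1 \wedge \cdots \wedge A_n$ directly.
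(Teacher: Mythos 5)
Your proof is correct, and it takes a mildly different route from the paper's. The paper regards each $\omega_i$ as an endomorphism of $V^*$, identifies $\omega_1\wedge\cdots\wedge\omega_n$ with the operator $n!\,\Alt\circ\bigotimes_{i}\omega_i$ on $\bigotimes^n V^*$, and then quotes Lemma \ref{lem:MixDetTens} together with Remark \ref{rem:SymAlt} (with the roles of $V$ and $V^*$ exchanged) to conclude that this operator acts on $\bigwedge^n V^*$, hence on the element $\dvol$, as multiplication by $n!\det(\omega_1,\ldots,\omega_n)$. You instead polarize at the level of the identity \eqref{eqn:DvolDet} itself: you check by hand that both sides are symmetric $n$--linear maps (the sign from transposing two $1$--forms in the $\bigwedge^n V^*$--factor cancels against the sign produced by the slot transposition in $\bigotimes^n V$ once $\dvol$ is applied --- this is exactly Lemma \ref{lem:SymmDet} with $k_i=k_j=1$), and then verify the diagonal case by a rank-one expansion, which correctly yields $\sum_{\sigma}\sgn(\sigma)\,A(e_{\sigma(1)})\otimes\cdots\otimes A(e_{\sigma(n)})$ and hence the factor $n!\det A$. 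The two arguments are the same polarization idea packaged differently: the paper's version hides the polarization inside the already-proved Lemma \ref{lem:MixDetTens} and is shorter on the page, while yours is self-contained and avoids the operator-on-$\bigotimes^n V^*$ identification and the bookkeeping of \eqref{eqn:SymAlt}. Your closing remark about the alternative route is, in essence, the paper's actual proof.
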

\begin{proof}
The proof goes by inverting the roles of $V$ and $V^*$ in Lemma \ref{lem:MixDetTens} and Remark \ref{rem:SymAlt}.

Since $V \otimes V^* \cong \End(V^*)$, we can view $\omega_i$ as a linear operator on $V^*$. It is easy to see that
$$
\omega_1 \wedge \cdots \wedge \omega_n = n! \Alt \circ \bigotimes_{i=1}^n \omega_i \in \End \left( {\textstyle\bigotimes}^n V^* \right).
$$
Due to \eqref{eqn:SymAlt} and \eqref{eqn:SymAMixDet}, the restriction of $\omega_1 \wedge \cdots \wedge \omega_n$ to $\bigwedge^n V^*$ is multiplication with $n! \det(\omega_1, \ldots, \omega_n)$. On the other hand, $\dvol(\omega_1 \wedge \cdots \wedge \omega_n)$ is by definition the value of $\omega_1 \wedge \cdots \wedge \omega_n$ on $\dvol$. Equation \eqref{eqn:DvolDet} follows.
\end{proof}

More generally, if $\omega_i \in V \otimes W^* \cong \Hom(W,V)$ with $\dim W = \dim V = n$, then we have
$$
\dvol_V(\omega_1 \wedge \cdots \wedge \omega_n) = n! \det(\omega_1, \ldots, \omega_n) \cdot \dvol_W,
$$
where $\dvol_V$ and $\dvol_W$ are some volume forms on $V$, respectively $W$, and the mixed determinant is the polarization of $\det \colon \Hom(W,V) \to \R$ which is defined with respect to $\dvol_V$ and $\dvol_W$. In the special case $\omega_i = \omega$ we obtain
\begin{equation}
\label{eqn:DVolVW}
\dvol_V \left( {\textstyle\bigwedge}^n \omega \right) = n! \det \omega \cdot \dvol_W.
\end{equation}
One sees immediately that
$$
\dvol_V \left( {\textstyle\bigwedge}^n \omega \right) = n! \omega^*(\dvol_V).
$$
Therefore equation \eqref{eqn:DVolVW} is just another form of a well-known identity
$$
\omega^*(\dvol_V) = \det \omega \cdot \dvol_W.
$$

\begin{lem}
\label{lem:SymmDet}
Let $\omega_i \in V \otimes \bigwedge^{k_i} V^*$ be a $V$--valued form on $V$, $i = 1, \ldots, n$. Then we have
$$
\dvol(\omega_1 \wedge \omega_2 \wedge \cdots \wedge \omega_n) = (-1)^{k_1 k_2 + 1} \dvol(\omega_2 \wedge \omega_1 \wedge \cdots \wedge \omega_n),
$$
and similarly for any other transposition of two factors.
\end{lem}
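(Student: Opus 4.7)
The plan is to unwind the definitions of the wedge product (Definition \ref{dfn:WedgProd}) and of $\dvol$ applied to a vector-valued form (Definition \ref{dfn:DVolForm}), and then observe that transposing $\omega_1$ and $\omega_2$ produces two independent sign effects: a graded-commutativity sign $(-1)^{k_1 k_2}$ in the scalar form factors, and an extra $-1$ coming from the fact that $\dvol$, viewed as a multilinear form on $V^n$, is alternating.

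First, by multilinearity it suffices to treat decomposable forms $\omega_i = v_i \otimes \beta_i$ with $v_i \in V$ and $\beta_i \in \bigwedge^{k_i} V^*$. Unraveling Definition \ref{dfn:WedgProd} gives
$$
\omega_1 \wedge \omega_2 \wedge \cdots \wedge \omega_n = (v_1 \otimes v_2 \otimes v_3 \otimes \cdots \otimes v_n) \otimes (\beta_1 \wedge \beta_2 \wedge \beta_3 \wedge \cdots \wedge \beta_n),
$$
and applying the map $\dvol \otimes \id$ from Definition \ref{dfn:DVolForm} yields
$$
\dvol(\omega_1 \wedge \cdots \wedge \omega_n) = \dvol(v_1, v_2, v_3, \ldots, v_n) \cdot \beta_1 \wedge \beta_2 \wedge \cdots \wedge \beta_n.
$$

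Next I would swap $\omega_1$ and $\omega_2$ and compute similarly. In the $V^{\otimes n}$-factor the tensor becomes $v_2 \otimes v_1 \otimes v_3 \otimes \cdots \otimes v_n$, and since $\dvol \in \bigwedge^n V^*$ is alternating as a multilinear form on $V$,
$$
\dvol(v_2, v_1, v_3, \ldots, v_n) = - \dvol(v_1, v_2, v_3, \ldots, v_n).
$$
In the scalar form factor, graded commutativity of the ordinary wedge product on $\bigwedge^\bullet V^*$ gives $\beta_2 \wedge \beta_1 = (-1)^{k_1 k_2} \beta_1 \wedge \beta_2$. Multiplying these two contributions produces exactly the sign $(-1)^{k_1 k_2 + 1}$, which proves the stated identity for decomposable $\omega_i$. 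Linearity in each $\omega_i$ then yields the general case, and the analogous argument for the transposition of the $j$-th and $(j{+}1)$-st factors (or any other pair) is identical.

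There is no real obstacle here; the only thing to be careful about is that the sign $-1$ attached to the transposition in the $V$-factor comes solely from $\dvol$ being alternating, not from any antisymmetry built into the tensor product itself — in particular, the intermediate object $\omega_1 \wedge \omega_2$ lives in $V \otimes V \otimes \bigwedge^{k_1 + k_2} V^*$ with an \emph{ordered} tensor, and it is only after the contraction with $\dvol$ that the extra sign appears.
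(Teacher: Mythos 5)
Your proof is correct and follows essentially the same route as the paper's: both arguments isolate the two sign contributions, namely the graded-commutativity factor $(-1)^{k_1 k_2}$ from exchanging the blocks in $\bigwedge^\bullet V^*$ and the extra $-1$ from the transposition in $\bigotimes^n V$ seen through the alternating form $\dvol$. Your version merely makes the reduction to decomposable forms explicit, which the paper leaves implicit.
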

\begin{proof}
As with usual forms with values in a field, transposing two factors in $\omega_1 \wedge \cdots \wedge \omega_n$ exchanges two blocks of factors in $\bigwedge^k V^*$ which causes the factor $(-1)^{k_i k_j}$ to appear. In our case there is also a transposition of two factors in $\bigotimes^n V$ which changes the sign of the evaluation of $\dvol$.
\end{proof}

\subsection{Vector bundle-valued differential forms}
\label{subsec:BundleVal}
Let $M$ be a smooth manifold, and let $E \to M$ be a smooth finite-dimensional vector bundle over $M$.

\begin{dfn}
An \emph{$E$--valued differential $k$--form} on $M$ is a section of the vector bundle $E \otimes \bigwedge^k (T^*M)$. The set of all $E$--valued differential $k$--forms is denoted by $\Omega^k(M, E)$.
\end{dfn}

The wedge product of two vector bundle-valued differential forms
$$
\wedge \colon \Omega^k(M, E) \otimes \Omega^l(M, F) \to \Omega^{k+l}(M, E \otimes F),
$$
is defined as in Definition \ref{dfn:WedgProd}. If $\dim E = n$, and $E$ is equipped with a volume form $\dvol \in \Gamma(\bigwedge^n E)$, then we define
$$
\dvol \colon \Omega^k \left( M, {\textstyle\bigotimes}^n E \right) \to \Omega^k (M)
$$
as in Definition \ref{dfn:DVolForm}.

A connection $\nabla$ on $E$ induces exterior differentiation of $E$--valued differential forms:
$$
d^\nabla \colon \Omega^k(M,E) \to \Omega^{k+1}(M,E),
$$
see e.~g. \cite[Chapter 3.1]{Jost08}. A connection $\nabla$ induces also connections on all tensor bundles associated with $E$, and thus exterior differentiation of corresponding tensor bundle-valued differential forms. All these definitions go by postulating certain Leibniz rules and imply the following two lemmas.

\begin{lem}
\label{lem:LeibVect}
Let $\omega_i \in \Omega^{k_i}(M,E)$, $i = 1, \ldots, p$ be $E$--valued differential forms on $M$. Then we have
$$
d^\nabla (\omega_1 \wedge \cdots \wedge \omega_p) = \sum_{i=1}^p (-1)^{k_1+\cdots+k_{i-1}}  (\omega_1 \wedge \cdots \wedge d^\nabla\omega_i \wedge \cdots \wedge \omega_p),
$$
where $d^\nabla$ are the exterior derivatives on $\Omega(M,E)$ and $\Omega(M, \bigotimes^p E)$.
\end{lem}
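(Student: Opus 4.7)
The plan is to reduce the identity to the classical Leibniz rule for scalar differential forms, combined with the defining Leibniz property of $d^\nabla$ on sections of $\bigotimes^p E$. First, by induction on $p$ it suffices to prove the case $p=2$, since once we know
\[
d^\nabla(\omega_1 \wedge \omega_2) = d^\nabla\omega_1 \wedge \omega_2 + (-1)^{k_1}\omega_1 \wedge d^\nabla\omega_2
\]
for any $E$-valued forms $\omega_1,\omega_2$ (possibly valued in different tensor powers of $E$, which is the form the statement really requires after we iterate), the general case follows by applying this twice to $\omega_1 \wedge (\omega_2 \wedge \cdots \wedge \omega_p)$ and using the induction hypothesis on the second factor.

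Next, both sides of the $p=2$ identity are $\R$-bilinear and local in $\omega_1,\omega_2$, so by a partition-of-unity argument it is enough to verify the identity on decomposable forms $\omega_1 = s \otimes \alpha$, $\omega_2 = t \otimes \beta$, where $s$ is a section of $\bigotimes^{a}E$, $t$ a section of $\bigotimes^{b}E$, and $\alpha \in \Omega^{k_1}(M)$, $\beta \in \Omega^{k_2}(M)$. On such forms one has by definition
\[
\omega_1 \wedge \omega_2 = (s \otimes t) \otimes (\alpha \wedge \beta),
\]
and the induced connection on $\bigotimes^{a+b}E$ satisfies $\nabla(s \otimes t) = \nabla s \otimes t + s \otimes \nabla t$. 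Therefore
\[
d^\nabla(\omega_1 \wedge \omega_2) = \nabla(s \otimes t) \wedge (\alpha \wedge \beta) + (s \otimes t) \otimes d(\alpha \wedge \beta).
\]
Expanding the two terms using the Leibniz rules for $\nabla$ and for the scalar $d$, and regrouping the factors according to Definition \ref{dfn:WedgProd}, yields
\[
(\nabla s \wedge \alpha) \wedge (t \otimes \beta) + (-1)^{k_1}(s \otimes \alpha) \wedge (\nabla t \wedge \beta) + (s \otimes d\alpha) \wedge (t \otimes \beta) + (-1)^{k_1}(s \otimes \alpha) \wedge (t \otimes d\beta),
\]
which is exactly $d^\nabla\omega_1 \wedge \omega_2 + (-1)^{k_1}\omega_1 \wedge d^\nabla\omega_2$.

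The main (minor) obstacle is purely bookkeeping: one has to track carefully how the sign $(-1)^{k_1}$ arises from commuting the form-part $\alpha$ past the section-part $\nabla t$ when rewriting $\nabla t \wedge \alpha \wedge \beta$ in the prescribed tensor order $E \otimes \bigwedge T^*M$. Once the $p=2$ case is settled, the induction step is immediate since on the right-hand side the sign $(-1)^{k_1+\cdots+k_{i-1}}$ is produced inductively by peeling off one factor at a time.
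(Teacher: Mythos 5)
Your proof is correct and follows essentially the route the paper intends: the paper states only that the lemma ``follows from the Leibniz rules'' by which $d^\nabla$ and the induced connections on tensor powers of $E$ are defined, and your argument (reduction to $p=2$, then to locally decomposable forms $s\otimes\alpha$, then the Leibniz rules for $\nabla$ on $\bigotimes E$ and for the scalar $d$) is precisely the detailed version of that assertion. The sign bookkeeping and the induction step are handled correctly.
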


\begin{lem}
\label{lem:DDet}
Assume that the volume form $\dvol$ is parallel:
$$
\nabla (\dvol) = 0.
$$
Then we have
$$
d (\dvol(\omega)) = \dvol \left( d^\nabla\omega \right)
$$
for every $\omega \in \Omega^k(M, {\textstyle\bigotimes}^n E)$.
\end{lem}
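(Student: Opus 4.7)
The plan is to exploit the fact that contraction with a parallel section commutes with exterior differentiation. Specifically, I would reinterpret $\dvol$ not just as a functional on $\bigwedge^n E$ but as a parallel section of $(\bigotimes^n E)^*$, and then recognize the claimed identity as a special case of the general fact that for a parallel section $s \in \Gamma(F^*)$ and an $F$-valued form $\omega \in \Omega^k(M,F)$ one has $d\langle s,\omega\rangle = \langle s, d^\nabla \omega\rangle$.

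First I would reduce to the case of a decomposable form $\omega = \sigma \otimes \alpha$ with $\sigma \in \Gamma(\bigotimes^n E)$ and $\alpha \in \Omega^k(M)$. Both sides of the claim are $C^\infty(M)$-linear in $\omega$, and every element of $\Omega^k(M,\bigotimes^n E)$ is locally a finite sum of such decomposables (pick a local frame of $E$, thus of $\bigotimes^n E$, and expand), so this reduction is harmless.

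Second, I would translate the hypothesis $\nabla(\dvol) = 0$ into a working form. The antisymmetrization $\Alt \colon \bigotimes^n E \to \bigwedge^n E$ intertwines the connections on tensor and exterior powers induced by $\nabla$ on $E$, and $\dvol \colon \bigotimes^n E \to \R$ factors as $\dvol_{\bigwedge^n E} \circ \Alt$; hence $\nabla(\dvol) = 0$ on $\bigwedge^n E$ is equivalent to saying that, for every $\sigma \in \Gamma(\bigotimes^n E)$ and every vector field $X$,
$$
X\bigl(\dvol(\sigma)\bigr) = \dvol(\nabla_X \sigma),
$$
that is, $d(\dvol(\sigma)) = \dvol(\nabla \sigma)$ as 1-forms on $M$.

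With these preparations the proof becomes a direct calculation. On one hand, the ordinary Leibniz rule gives
$$
d(\dvol(\omega)) = d\bigl(\dvol(\sigma)\,\alpha\bigr) = d(\dvol(\sigma)) \wedge \alpha + \dvol(\sigma)\, d\alpha = \dvol(\nabla \sigma) \wedge \alpha + \dvol(\sigma)\, d\alpha.
$$
On the other hand, by Lemma \ref{lem:LeibVect} and the definition of $\dvol$ on $(\bigotimes^n E)$-valued forms,
$$
\dvol(d^\nabla \omega) = \dvol(\nabla \sigma \wedge \alpha + \sigma \otimes d\alpha) = \dvol(\nabla \sigma) \wedge \alpha + \dvol(\sigma)\, d\alpha,
$$
and the two expressions coincide. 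The only delicate point — really a bookkeeping check rather than a genuine obstacle — is the compatibility statement in the second paragraph, that parallelism of $\dvol$ as a section of $\bigwedge^n E^*$ is the same as parallelism as a functional on $\bigotimes^n E$; this follows from the Leibniz rule for $\nabla$ on tensor products together with the parallelism of $\Alt$.
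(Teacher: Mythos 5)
Your proof is correct and is essentially the verification the paper has in mind: the paper states Lemma \ref{lem:DDet} without proof, remarking only that it follows from the Leibniz rules by which $d^\nabla$ and the induced connections are postulated, and your computation on decomposables $\sigma \otimes \alpha$, using parallelism of $\dvol$ as a section of $(\bigotimes^n E)^*$, carries that out. One small inaccuracy: neither side of the identity is $C^\infty(M)$-linear in $\omega$ (only their difference is tensorial), but the reduction to decomposables needs only additivity and locality of both sides, so the argument is unaffected.
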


In particular, the assumption of Lemma \ref{lem:DDet} holds if the vector bundle $E$ is equipped with a scalar product, $\dvol$ is the associated volume form, and $\nabla$ is a metric connection. A natural situation when this occurs is a smooth submanifold $M$ of a Riemannian manifold $N$ with $E = TN|_M$, the restriction of the tangent bundle of $N$ to $M$. More specifically, $N$ may be $M$ itself, with $\nabla$ the Levy-Civita connection on $E = TM$.

\subsection{Vector-valued differential forms}
\label{subsec:VecVal}
Assume that $E$ is trivial and a trivialization
\begin{equation}
\label{eqn:Triv}
E \cong V \times M,
\end{equation}
is fixed. This induces an isomorphism
$$
\Omega^k(M, E) \cong C^\infty(M, V) \otimes \Omega^k M
$$
which allows us to call differential forms with values in a \emph{trivialized} bundle \emph{vector-valued differential forms}. A choice of a basis in $V$ establishes an isomorphism
$$
C^\infty(M, V) \otimes \Omega^k M \cong (\Omega^k M)^n,
$$
where $n = \dim V$.

Let $\nabla$ be the connection associated with the trivialization \eqref{eqn:Triv}. The associated exterior derivative $d^\nabla$ is nothing else than the componentwise differentiation
$$
(\Omega^k M)^n \to (\Omega^{k+1} M)^n
$$
with respect to an arbitrary choice of a basis in $V$.

A trivialization \eqref{eqn:Triv} induces also a trivialization of the dual bundle $E^*$. Choose an element $\dvol \in \bigwedge^n V^*$ and consider the constant section $\dvol$ of the bundle $\bigwedge^n E^*$. Then we have $\nabla (\dvol) = 0$, which implies that Lemma \ref{lem:DDet} holds for the canonical connection on a trivialized bundle.

The next lemma states that a trivial connection if \emph{flat}, which is false for general connections.

\begin{lem}
\label{lem:DD}
Let $d^\nabla$ be the exterior derivative associated with the canonical flat connection on a trivialized vector bundle \eqref{eqn:Triv}. Then we have
$$
d^\nabla (d^\nabla \omega) = 0,
$$
for every $V$--valued differential form $\omega$.
\end{lem}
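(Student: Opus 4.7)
The plan is to reduce the statement to the classical identity $d \circ d = 0$ for ordinary (scalar-valued) differential forms. The hypothesis fixes a trivialization $E \cong V \times M$, and the connection $\nabla$ in question is the one for which constant sections of the trivial bundle are parallel. This is exactly the flat structure whose exterior derivative $d^\nabla$ was described in the paragraph preceding the lemma as componentwise differentiation.

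Concretely, I would choose a basis $e_1, \ldots, e_n$ of $V$ and write any $\omega \in \Omega^k(M,E)$ uniquely as $\omega = \sum_{i=1}^n e_i \otimes \omega^i$ with $\omega^i \in \Omega^k(M)$. Under the canonical flat connection, $\nabla e_i = 0$, so the Leibniz-type definition of $d^\nabla$ (cf.\ Subsection \ref{subsec:BundleVal}) gives
\[
d^\nabla \omega = \sum_{i=1}^n e_i \otimes d\omega^i,
\]
where $d$ is the ordinary exterior derivative on scalar forms. Iterating once more yields
\[
d^\nabla (d^\nabla \omega) = \sum_{i=1}^n e_i \otimes d(d\omega^i) = 0,
\]
since $d \circ d = 0$ on $\Omega^*(M)$. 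Because the $e_i$ are linearly independent, the vanishing of each component implies the vanishing of the $V$-valued form.

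There is essentially no obstacle: the only subtlety is checking that the componentwise description of $d^\nabla$ is independent of the chosen basis of $V$ (which is clear, since any other basis is obtained by a constant change of basis matrix that commutes with $d$) and that the canonical connection of a trivialized bundle does coincide with the one used in the definition of $d^\nabla$ — both points are already built into the setup of Subsection \ref{subsec:VecVal}. Thus the lemma follows immediately from $d^2=0$ scalarwise.
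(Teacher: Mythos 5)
Your proof is correct and follows exactly the route of the paper, which also derives the lemma from the interpretation of $d^\nabla$ as componentwise exterior differentiation together with the scalar identity $d \circ d = 0$. You have simply written out explicitly the basis decomposition that the paper leaves implicit.
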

\begin{proof}
Immediate from the interpretation of $d^\nabla$ as the componentwise exterior differentiation.
\end{proof}

A natural situation when we have to do with a trivialized vector bundle is an embedded manifold $M \subset \R^n$ with the vector bundle $E = T\R^n|_M$.

\section{Curvature of $\rho^2$-warped product metrics}
\label{sec:CurvWarped}
Let $\hat g$ be a Riemannian metric on $\Sph^2$. Consider the Riemannian metric
\begin{equation}
\label{eqn:WarpMetr}
\widetilde{g} = d\rho^2 + \rho^2 \hat g
\end{equation}
on the manifold $\R_+ \times \Sph^2$. In particular, the metric $\widetilde g_r$ in \eqref{eqn:TildeGR} is of this form.

We compute here the Riemann tensor and the sectional curvatures of the metric $\wg$. In more general settings, this is done in \cite[Section 7.42]{ON83} and \cite[Section 6, Exercise 11]{Kue99}.

Let $\widetilde R$ be the curvature tensor of the Riemannian manifold $(\R_+ \times \Sph^2, \widetilde g)$. Let $\partial_\rho$ be the unit vector field arising from the product structure on $\R_+ \times \Sph^2$. By $\partial_\rho^\perp \subset T_{(\rho, x)}(\R_+ \times \Sph^2)$ we denote the plane orthogonal to $\partial_\rho$. As $\partial_\rho$ is also the gradient of the function $\rho$, the plane $\partial_\rho^\perp$ is tangent to the level sets of $\rho$. Denote by $\sec_{(\rho, x)}(L)$ the sectional curvature of $\wg$ in the plane $L \subset T_{(\rho, x)}(\R_+ \times \Sph^2)$.

\begin{lem}
\label{lem:WarpProdCurv}
The curvature of the Riemannian metric \eqref{eqn:WarpMetr} has the following properties.
\begin{equation}
\label{eqn:R}
\wR(X,Y)Z = \sec(\partial_\rho^\perp) \cdot \dvol(\partial_\rho, X, Y) \cdot (Z \times \partial_\rho)
\end{equation}
for all vectors $X, Y, Z \in T_{(\rho,x)}(\R_+ \times \Sph^2)$;
\begin{equation}
\label{eqn:SecL}
\sec_{(\rho,x)}(L) = \cos^2\phi \cdot \sec_{(\rho,x)}(\partial_\rho^\perp),
\end{equation}
where $\phi$ is the angle between the planes $L$ and $\partial_\rho^\perp$;
\begin{equation}
\label{eqn:Sec1}
\sec_{(\rho,x)}(\partial_\rho^\perp) = \frac{\sec_{(1,x)}(\partial_\rho^\perp)}{\rho^2}.
\end{equation}
\end{lem}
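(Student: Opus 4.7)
The plan is to reduce all three statements to the Gauss equation applied to the slice $M_\rho := \{\rho\} \times \Sph^2$, together with standard Levi-Civita identities for a $\rho^2$-warped product. First I would record preparatory facts from the Koszul formula: radial rays are geodesics, so $\wnabla_{\partial_\rho}\partial_\rho = 0$; and for any horizontal lift $X$ of a vector on $\Sph^2$ (a vector field that depends only on the $\Sph^2$-factor), $\wnabla_{\partial_\rho} X = (1/\rho) X$ and $\wnabla_X \partial_\rho = (1/\rho) X$. These imply that $M_\rho$ carries induced metric $\rho^2 \hat g$ with outward unit normal $\partial_\rho$ and shape operator $B_\rho = (1/\rho)\id$.

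For (3), the intrinsic Gauss curvature of $(M_\rho, \rho^2 \hat g)$ at $x$ is $K^{\hat g}(x)/\rho^2$, while $\det B_\rho = 1/\rho^2$. The Gauss equation, in the form $K = \sec + \det B$ used in the paper's proof of Theorem \ref{thm:HE}, therefore yields
\begin{equation*}
\sec_{(\rho,x)}(\partial_\rho^\perp) = \frac{K^{\hat g}(x) - 1}{\rho^2}.
\end{equation*}
Setting $\rho = 1$ and dividing gives (3).

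For (1), I would verify both sides on the orthonormal basis $\{\partial_\rho, e_1, e_2\}$ with $e_1, e_2 \in \partial_\rho^\perp$ and extend by trilinearity. The right-hand side vanishes whenever $X = \partial_\rho$ or $Y = \partial_\rho$ (the factor $\dvol(\partial_\rho,X,Y)$ is alternating) and whenever $Z = \partial_\rho$ (since $\partial_\rho \times \partial_\rho = 0$). Each of these cases for the left-hand side follows from the preparatory identities by direct evaluation of $\wR(X,Y)W = \wnabla_X \wnabla_Y W - \wnabla_Y \wnabla_X W - \wnabla_{[X,Y]}W$: the radial parts always combine to zero thanks to $\wnabla_{\partial_\rho}U = (1/\rho)U$ for horizontal $U$. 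For the remaining case of horizontal $X, Y, Z$, the Gauss equation applied to $M_\rho$ gives
\begin{equation*}
\wR(X, Y)Z = \sec_{(\rho,x)}(\partial_\rho^\perp)\bigl(\langle Y, Z\rangle_{\wg} X - \langle X, Z\rangle_{\wg} Y\bigr).
\end{equation*}
Using $X \times Y = \dvol(\partial_\rho, X, Y)\,\partial_\rho$ for horizontal $X, Y$ (by definition of the cross product in the oriented 3-space $T_{(\rho,x)}(\R_+ \times \Sph^2)$) and the double-cross-product rule $Z \times (X \times Y) = \langle Z, Y\rangle X - \langle Z, X\rangle Y$ shows
\begin{equation*}
\dvol(\partial_\rho, X, Y)\,(Z \times \partial_\rho) = \langle Y, Z\rangle X - \langle X, Z\rangle Y,
\end{equation*}
matching (1) exactly.

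For (2), choose an orthonormal basis $\{e_1, e_2\}$ of $L$, so that $\sec(L) = \langle \wR(e_1, e_2) e_2, e_1\rangle_{\wg}$. Substituting (1) and using $\langle e_2 \times \partial_\rho, e_1\rangle = \dvol(e_2, \partial_\rho, e_1) = \dvol(\partial_\rho, e_1, e_2)$ gives
\begin{equation*}
\sec(L) = \sec(\partial_\rho^\perp)\,\dvol(\partial_\rho, e_1, e_2)^2,
\end{equation*}
and $|\dvol(\partial_\rho, e_1, e_2)|$ is the area of the orthogonal projection of the unit parallelogram $e_1 \wedge e_2$ onto $\partial_\rho^\perp$, which equals $\cos\phi$ by definition of the dihedral angle between $L$ and $\partial_\rho^\perp$. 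The main obstacle is purely notational: keeping the sign conventions for $\wR$, for the cross product, and for the volume form mutually consistent across the Gauss equation, the BAC–CAB identity, and the statement of (1). I would fix these once and for all to match the paper's usage in Theorem \ref{thm:HE}; after that the computations above are mechanical.
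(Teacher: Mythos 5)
Your proof is correct and follows essentially the same route as the paper: all curvature components involving $\partial_\rho$ are shown to vanish (your Koszul-formula computations for the slices $\{\rho\}\times\Sph^2$, whose shape operator $\frac1\rho\id$ is exactly the paper's Hessian $\widetilde\Hess\rho=\frac1\rho\pi_\rho$, replace the paper's appeal to the radial and mixed curvature equations), and then two-dimensionality of $\partial_\rho^\perp$ forces the tensor to be $\sec(\partial_\rho^\perp)$ times the standard one. If anything you are more complete than the paper, whose written proof only derives \eqref{eqn:R} and leaves \eqref{eqn:SecL} and \eqref{eqn:Sec1} implicit; your derivation of \eqref{eqn:Sec1} from the Gauss equation $K=\sec+\det B$ on the slice is exactly the intended argument.
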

\begin{proof}
The curvature tensor can be computed following \cite[Chapter 2.4]{Pet06}. For this, consider the function $\rho$ on $\R_+ \times \Sph^2 \to \R$, given by the projection to the first factor. This is a distance function, i.e. its gradient has norm 1. The Hessian
$$
\widetilde\Hess \rho: X \mapsto \wnabla_X \wnabla \rho
$$
can easily be computed:
$$
\widetilde \Hess \rho = \frac{1}{\rho} \pi_\rho,
$$
where $\pi_\rho$ is the orthogonal projection to $\partial_\rho^\perp$. Then the radial curvature equation yields
$$
\wR(\,\cdot\,, \partial_\rho) \partial_\rho = 0,
$$
and the mixed curvature equation (Codazzi-Mainardi equation) implies
$$
\widetilde{g}(\wR(X,Y)Z, \partial_\rho) = 0,
$$
for any vectors $X, Y, Z \in \partial_\rho^\perp$.
It follows that
\begin{equation}
\label{eqn:RXX'}
\widetilde{g}(\wR(X,Y)Z, W) = \widetilde{g}(\wR(X',Y')Z', W'),
\end{equation}
for arbitrary vectors $X, Y, Z, W$, where $X' = \pi_\rho(X)$ and so on. As $X'$, $Y'$, $Z'$, and $W'$ all lie in the plane $\partial_\rho^\perp$, we have
\begin{equation}
\label{eqn:RSec}
\widetilde{g}(\wR(X', Y')Z', W') = \sec(\partial_\rho^\perp) \cdot \darea(X', Y') \cdot \darea(W', Z'),
\end{equation}
where $\darea$ denotes the area form in $\partial_\rho^\perp$ induced from $\widetilde{g}$. Clearly,
\begin{eqnarray*}
\darea(X',Y') & = & \dvol(\partial_\rho, X, Y),\\
\darea(W',Z') & = & \dvol(\partial_\rho, W, Z) = \widetilde{g}(Z \times \partial_\rho, W).
\end{eqnarray*}
By substituting this in \eqref{eqn:RSec} and using \eqref{eqn:RXX'}, we obtain \eqref{eqn:R}.

\end{proof}

%\bibliographystyle{plain}
%\bibliography{HEInfRig}

\end{document}